\theoremstyle{plain}
\newtheorem{theorem}{Theorem}
\newtheorem{proposition}[theorem]{Proposition}
\newtheorem{lemma}[theorem]{Lemma}
\newtheorem{corollary}[theorem]{Corollary}
\newtheorem{conjecture}[theorem]{Conjecture}
\theoremstyle{definition}
\theoremstyle{remark}
\newtheorem{remark}[theorem]{Remark}
\DeclareMathOperator{\rank}{rank}
\title[New Counterexamples to Hendrickson's Conjecture]{New Classes of Counterexamples to Hendrickson's Global Rigidity Conjecture}
\thanks{Special thanks to Dylan Thurston, Joe Ross, and Ina Petkova for their immeasurable help. This work was partially supported by NSF RTG Grant 07-39392.}
\author{Samuel Frank}
\address{Department of Mathematics, Columbia University, New York, NY 10027}
\email{smf2147@columbia.edu}
\author{Jiayang Jiang}
\address{Department of Mathematics, Columbia University, New York, NY 10027}
\email{jj2333@columbia.edu}
\begin{document}

\maketitle

\begin{abstract}
We examine the generic local and global rigidity of various graphs in $\mathbb{R}^d$.  Bruce Hendrickson showed that some necessary conditions for generic global rigidity are ($d+1$)-connectedness and generic redundant rigidity and hypothesized that they were sufficient in all dimensions.  We analyze two classes of graphs that satisfy Hendrickson's conditions for generic global rigidity, yet fail to be generically globally rigid.  We find a large family of bipartite graphs for $d > 3$, and we define a construction that generates infinitely many graphs in $\mathbb{R}^5$.  Finally, we state some conjectures for further exploration.
\end{abstract}

\begin{section}{Introduction and Preliminaries}
A \emph{framework} consists of a graph whose vertices have been assigned coordinates in $\mathbb{R}^d$.  An important  question is whether or not a given framework is \emph{locally rigid}, that is, whether there is a way to continuously deform the framework while maintaing its edge lengths.  A related question is whether or not the framework is \emph{globally rigid}, or whether any other framework with the same underlying graph and the same edge lengths is equivalent up to Euclidean motions (combinations of reflections, rotations, and translations).  For $d \leq 3$, this question has many important real-world applications, such as analyzing the structural integrity of buildings or determining molecular structure.  However, the problem is not fully understood, and only recently has it been explored in great detail.  Some complete bipartite graphs have the characteristic that most of their frameworks are not globally rigid, but in a non-obvious way; these graphs have been well characterized by Connelly \cite{cn}, as well as Bolker and Roth \cite{br}.  In this paper, we present more graphs with this characteristic.

A graph is defined by $G = (V, E)$ with $|V| = v$ and $|E| = e$, where $V$ is a set of vertices and $E$ is composed of some $2$-element subsets of $V$ which represent edges. A \emph{realization} is some $p = (p_1, p_2, \ldots, p_v) \in \mathbb{R}^{vd}$, where each $p_i$ is the location of $v_i \in V$ in $\mathbb{R}^d$. This defines the framework $G(p)$. For some framework $G(p)$, the half edge-length squared function is $f_G(p): \mathbb{R}^{vd} \to \mathbb{R}^e$, where $f_G(p) = \frac{1}{2}(\ldots, |p_i - p_j|^2, \ldots)$ for $\{i, j\} \in E$.  Define a continuous \emph{flexing} of $G(p)$ as a differentiable one-parameter family of realizations including $p$ such that for any $q$ in the family, $f_G(q) = f_G(p)$.  A framework is \emph{locally rigid} if all its flexings are trivial (the Euclidean motions). A framework is \emph{locally flexible} if there exists a non-trivial flexing.

The problem of determining the local rigidity of a framework is very difficult.  To simplify the problem, we will restrict our focus to \emph{generic} realizations, defined as realizations whose coordinates are algebraically independent over the rationals.  For generic realizations, this problem becomes much easier and makes use of $df_G(p)$, which we will refer to as the \emph{rigidity matrix}.  In this $e \times vd$ matrix, each row represents an edge, and each column represents a coordinate of some vertex.  In the row representing the edge connecting $v_i$ and $v_j$, any given column will be $0$ if it does not represent $v_i$ or $v_j$.  If it represents the $k^{th}$ coordinate of $p_i$, the entry is the $k^{th}$ coordinate of $p_j$ minus the $k^{th}$ coordinate of $p_i$. 

Due to early results by Asimow and Roth \cite{ar}, we know that local rigidity is a generic property of the underlying graph, meaning that if it holds for one generic framework, it holds for all generic frameworks.  Thus, one can think of generic local rigidity as an inherent property of the graph.  The rank of the rigidity matrix is closely related to the local rigidity of a generic framework.  For graphs with at least $d+1$ vertices, we say a framework $G(p)$ is \emph{infinitesimally rigid} if $\rank df_G(p) = vd - \binom{d+1}{2}$.  We say it is \emph{infinitesimally flexible} if $\rank df_G(p) < vd - \binom{d+1}{2}$.
\begin{theorem} [Asimow and Roth \cite{ar}]\label{thm:rigidity}
A graph $G$ with at least $d+1$ vertices is generically locally rigid in $\mathbb{R}^d$ if and only if a generic realization is infinitesimally rigid.
\end{theorem}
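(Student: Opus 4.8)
The plan is to prove the two directions separately, isolating the single place where genericity is essential. Throughout I would use the standing fact that, since $v \ge d+1$, a generic realization $p$ affinely spans $\mathbb{R}^d$; consequently the orbit of $p$ under the Euclidean group is a smooth submanifold of $\mathbb{R}^{vd}$ of dimension $\binom{d+1}{2}$. Differentiating the trivial flexings (translations and rotations) produces a $\binom{d+1}{2}$-dimensional space of infinitesimal motions lying in $\ker df_G(p)$, since Euclidean motions preserve all edge lengths and hence keep $f_G$ constant. Rank--nullity then yields the universal bound $\rank df_G(p) \le vd - \binom{d+1}{2}$, so infinitesimal rigidity is exactly the statement that $df_G(p)$ attains the largest rank allowed.

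First I would dispatch the direction ``infinitesimally rigid $\Rightarrow$ locally rigid,'' which in fact holds at every affinely spanning $p$, generic or not. Because $\rank df_G(p)$ is maximal and the rank function is lower semicontinuous, the rank is constant equal to $vd - \binom{d+1}{2}$ on a neighborhood of $p$. The constant-rank theorem then presents the level set $f_G^{-1}(f_G(p))$ near $p$ as a smooth manifold of dimension $vd - \rank df_G(p) = \binom{d+1}{2}$. This manifold contains the Euclidean orbit of $p$, itself a submanifold of the same dimension $\binom{d+1}{2}$; a submanifold of full dimension inside a connected manifold is open in it, so near $p$ every configuration with the same edge lengths is a Euclidean image of $p$. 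Any flexing of $G(p)$ is a path inside this level set, hence stays in the orbit and is trivial, proving local rigidity.

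The substantive direction is the contrapositive of the converse: if a generic $p$ is infinitesimally flexible I must produce a genuine nontrivial flexing, and here genericity enters decisively. The maximal rank $r$ achieved by $df_G$ over all realizations is certified by the nonvanishing of some $r \times r$ minor, a polynomial in the coordinates with rational coefficients that is not identically zero; since the coordinates of $p$ are algebraically independent over $\mathbb{Q}$, this minor cannot vanish at $p$, so $\rank df_G(p) = r$ and, being maximal, the rank is again locally constant. Infinitesimal flexibility means $r < vd - \binom{d+1}{2}$, so the constant-rank theorem now exhibits $f_G^{-1}(f_G(p))$ near $p$ as a manifold of dimension $vd - r > \binom{d+1}{2}$. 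This manifold strictly contains the $\binom{d+1}{2}$-dimensional Euclidean orbit, so there are configurations in the level set arbitrarily close to $p$ that are not congruent to it; choosing a smooth path in the level set from $p$ into these extra directions yields a nontrivial flexing, i.e.\ local flexibility.

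Finally I would assemble the equivalence at the level of the graph. Since Asimow and Roth's prior result (Theorem~\ref{thm:rigidity} is cited in tandem with their genericity theorem above) makes local rigidity a generic property of $G$, it suffices to test a single generic realization, and the two implications above---each phrased for an arbitrary generic $p$---combine to give the stated ``if and only if.'' I expect the main obstacle to be the hard direction, and specifically the passage from the bare dimension gap $vd - r > \binom{d+1}{2}$ to an honest continuous nontrivial flexing: one must invoke the constant-rank theorem correctly, which is precisely where local constancy of the rank---and therefore genericity---is indispensable, and then verify that a path leaving the Euclidean orbit inside the level-set manifold is realizable and genuinely nontrivial.
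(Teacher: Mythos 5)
The paper offers no proof of this statement---it is quoted directly from Asimow and Roth \cite{ar}---and your argument faithfully reproduces the classical proof from that source: the constant-rank theorem applied where the rank is locally constant, the dimension count of the level set against the $\binom{d+1}{2}$-dimensional Euclidean orbit, and, decisively, the observation that algebraic independence of the coordinates over $\mathbb{Q}$ forces a nonvanishing $r \times r$ minor and hence globally maximal rank at a generic $p$. Your proposal is correct; the only point left implicit is that a flexing shown to remain in the orbit for small $t$ remains there for all $t$ (a routine open-and-closed continuation argument along the path, using that every configuration congruent to $p$ is again a spanning point of maximal rank).
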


Note that the rank of the rigidity matrix cannot be greater than $vd - \binom{d + 1}{2}$, because the Euclidean motions are always in the kernel of the rigidity matrix and there is a $\binom{d + 1}{2}$-dimensional space of them.  This provides an algorithm to check if a graph is \emph{generically locally rigid} (GLR) \cite{he, ght}:  given a graph, randomize its coordinates, generate the rigidity matrix modulo a large prime, and calculate its rank.  With no false positives and very few false negatives, this will decide the generic local rigidity of the graph. 

Next, for some graph $G$, let $K_v$ be the complete graph on the same set of vertices, that is, the graph such that $E$ consists of all $2$-element subsets of $V$.  We will define a framework $G(p)$ as \emph{globally rigid} when $f_G(p) = f_G(q)$ implies that $f_{K_v}(p) = f_{K_v}(q)$. This means that a framework is globally rigid when, for any other framework with the same edge lengths, all other pairwise distances are the same.  Clearly, all globally rigid frameworks are also locally rigid; however, not all locally rigid frameworks are globally rigid.  As an example, consider a generic realization $p$ of a quadrilateral in $\mathbb{R}^2$ with an edge along one diagonal [Figure~\ref{fig:quad}]. This framework is locally rigid, since there is no non-trivial continuous flexing. However, it is possible to reflect one part of the framework over the diagonal to produce another realization $q$ such that  $f_G(p) = f_G(q)$ but $f_{K_v}(p) \neq f_{K_v}(q)$.
\begin{figure}
\subfloat[]{\includegraphics[scale=1]{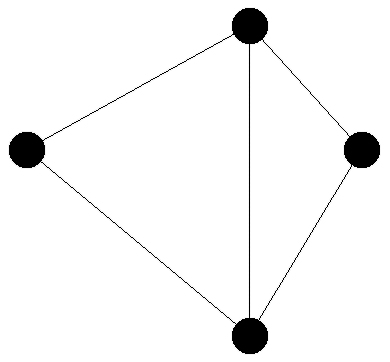}\label{fig:quad1}}\hspace{1in}
\subfloat[]{\includegraphics[scale=1]{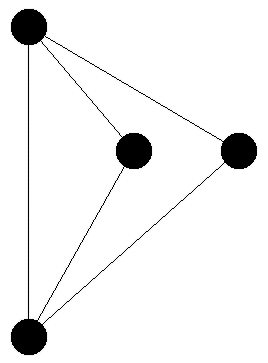}\label{fig:quad2}}
\caption{This framework is locally rigid in $\mathbb{R}^2$ but not
  globally rigid.}
\label{fig:quad}
\end{figure}

One can test for global rigidity using stresses.  A stress is some vector $\omega = (\ldots,~ \omega_{ij}, \ldots) \in \mathbb{R}^e$ for all $\{i, j\} \in E$.  An \emph{equilibrium stress} is a stress such that, for all vertices $v_i \in V$, \[ \sum_{j | \{i, j\} \in E} \omega_{ij} \cdot (p_j - p_i) = 0. \] From now on, by \emph{stress} we mean equilibrium stress, unless otherwise specified. 

\begin{proposition}\label{prop:rmstress}
The space of stresses of a framework $G(p)$ is precisely $\ker(df_G(p)^T)$.
\end {proposition}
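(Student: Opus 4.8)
The plan is to prove the statement by a direct computation, unwinding the definition of the rigidity matrix and matching the entries of the matrix-vector product $df_G(p)^T \omega$ against the equilibrium condition. Since the claim asserts the equality of two subspaces of $\mathbb{R}^e$, it suffices to show that a stress $\omega$ satisfies $df_G(p)^T \omega = 0$ if and only if $\omega$ is an equilibrium stress; establishing this biconditional on vectors gives the equality of the spaces immediately.

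First I would fix notation for the entries of the rigidity matrix. The matrix $df_G(p)$ is $e \times vd$; I index its columns by pairs $(i,k)$, where $i \in \{1,\dots,v\}$ labels a vertex and $k \in \{1,\dots,d\}$ a coordinate, and its rows by the edges $\{i,j\} \in E$. Reading off the description in the text, the entry of $df_G(p)$ in row $\{i,j\}$ and column $(i,k)$ is $(p_j)_k - (p_i)_k$, the entry in column $(j,k)$ is $(p_i)_k - (p_j)_k$, and every other entry in that row is $0$.

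Next, passing to the transpose, the matrix $df_G(p)^T$ is $vd \times e$, with rows indexed by coordinate pairs $(i,k)$ and columns by edges; its entry in row $(i,k)$ and column $\{a,b\}$ is nonzero only when $i \in \{a,b\}$, equalling $(p_b)_k - (p_i)_k$ when $i=a$ and symmetrically when $i=b$. Computing the product, the component of $df_G(p)^T \omega$ in row $(i,k)$ is
\[
\bigl(df_G(p)^T \omega\bigr)_{(i,k)} = \sum_{j \,:\, \{i,j\} \in E} \omega_{ij}\,\bigl((p_j)_k - (p_i)_k\bigr),
\]
where the sum runs over the neighbors $j$ of $v_i$. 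The final step is to observe that the $d$ components indexed by $(i,1),\dots,(i,d)$ assemble into the single vector equation
\[
\bigl(df_G(p)^T \omega\bigr)_i = \sum_{j \,:\, \{i,j\} \in E} \omega_{ij}\,(p_j - p_i),
\]
so that $df_G(p)^T \omega = 0$ holds exactly when this vector vanishes for every vertex $v_i$ — which is verbatim the defining condition of an equilibrium stress. This yields both inclusions at once and completes the argument.

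I do not expect a genuine conceptual obstacle; the content is purely a matter of correctly reading off and transposing the rigidity matrix. The only point demanding care is the index bookkeeping: keeping the block structure of the transpose aligned so that the coordinates belonging to a fixed vertex $v_i$ collect into one clean $d$-dimensional block, since a sign slip or a mismatched index would silently break the identification with the equilibrium equations.
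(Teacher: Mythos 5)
Your proof is correct and is exactly the argument the paper sketches: the paper's one-line proof (``write the stress condition for each vertex and arrange into a matrix\ldots this matrix is exactly $df_G(p)^T$'') is precisely your computation of $\bigl(df_G(p)^T\omega\bigr)_{(i,k)}$, which you have simply carried out in full. Nothing is missing, and the index bookkeeping you flag as the only delicate point is handled correctly (note the sign convention in the paper's rigidity matrix is irrelevant here, since negating rows does not change the kernel of the transpose).
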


\begin{proof}
Write the stress condition for each vertex $v_i$, and arrange them into a matrix such that every vector in the kernel is a stress.  It is not difficult to show that this matrix is exactly $df_G(p)^T$.
\end{proof}

A \emph{stress matrix} $\Omega$ is a $v \times v$ matrix satisfying the following conditions:
\begin{equation*} 
\Omega_{ij} = 
\begin{cases}
0 & \text{if } \{i,j\} \not\in E \text{ and } i \neq j\\ 
\omega_{ij} & \text{if } \{i,j\} \in E\\ 
- \sum_{j' \neq i}\Omega_{ij'}& \text{if } i = j 
\end{cases}
\end{equation*}

Each of the coordinate projections is in the kernel of $\Omega$, as is the vector $(1, \ldots, 1)$. This means the dimension of the kernel is at least $d+1$.  Also as a consequence, across each row, the vectors $p_1, p_2, \ldots, p_v$ fulfill an affine linear relation with the row's entries acting as coefficients.

\begin{theorem}[Connelly \cite{cn2}, Gortler-Healy-Thurston\cite{ght}]
\label{thm:strssmtrx}
A graph with at least $d+2$ vertices is generically globally rigid if and only if, for some generic realization, there is a stress matrix with nullity $d + 1$.
\end{theorem}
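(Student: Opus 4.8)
The plan is to prove the two implications separately, since they are of quite different character: the ``if'' direction (a stress matrix of nullity $d+1$ forces global rigidity) is the classical sufficiency result, while the ``only if'' direction (global rigidity forces such a matrix to exist) is the harder converse. Before either, I would record that both the maximal rank of a stress matrix and generic global rigidity are \emph{generic} properties of $G$ --- that is, they hold at one generic realization iff they hold at all --- so that the quantifier ``for some generic realization'' may be freely replaced by ``for every.'' The maximal-rank statement is the easy half of this: the rank of $\Omega$ is governed by the nonvanishing of certain minors, which are polynomial conditions in $p$, and a polynomial that is nonzero at an algebraically independent point is nonzero on a Zariski-open dense set.

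For sufficiency, suppose $p$ is generic and admits a stress $\omega$ whose stress matrix $\Omega$ has nullity exactly $d+1$. The idea is to study the stress energy $E_\omega(x) = \sum_{\{i,j\}\in E}\omega_{ij}|x_i - x_j|^2$, which written coordinatewise is $E_\omega(x) = \sum_{k=1}^{d}(x^{(k)})^T \Omega\, x^{(k)}$, where $x^{(k)}$ is the vector of $k$-th coordinates. Since $\omega$ is supported on edges, $E_\omega$ depends only on the edge lengths, so any realization $q$ equivalent to $p$ satisfies $E_\omega(q) = E_\omega(p)$. Because the $d$ coordinate projections of $p$ and the all-ones vector lie in $\ker\Omega$ and span it (by the nullity hypothesis, using that generic $p$ affinely spans $\mathbb{R}^d$), one gets $E_\omega(p) = 0$, hence $E_\omega(q) = 0$. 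I would then argue that this forces every coordinate projection of $q$ into $\ker\Omega$, so $q = Ap + b$ is an affine image of $p$; finally, since $p$ is generic, an affine image preserving every edge length must have $A$ orthogonal, so $q$ is congruent to $p$. The delicate point is the step $E_\omega(q)=0 \Rightarrow q^{(k)}\in\ker\Omega$: it is immediate when $\Omega$ is positive semidefinite, but a maximal-rank stress matrix need not be. Supplying the needed positivity at a generic configuration is the main obstacle in this direction and is exactly the content of Connelly's theorem.

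For necessity I would argue by contraposition: assuming the maximal nullity of a stress matrix exceeds $d+1$ at a generic $p$, I would construct a realization $q$ equivalent but \emph{not} congruent to $p$. The natural setting is the measurement map $f_G$ together with its completion $f_{K_v}$; global rigidity says precisely that the generic fibre of $f_G$ is a single congruence class, equivalently that $f_{K_v}$ factors through $f_G$, and by Proposition~\ref{prop:rmstress} the stresses $\ker df_G(p)^T$ encode the obstruction to that factorization. A key mechanism is the averaging identity: if $p$ and $q$ are equivalent then $\tfrac12(p-q)$ is an infinitesimal flex of the midpoint framework $G(\tfrac12(p+q))$, which links the existence of a genuine second realization to the interaction between flexes and stresses. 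The hard part, and the heart of the Gortler--Healy--Thurston argument, is to convert a deficiency in stress-matrix rank into an actual second solution rather than a merely infinitesimal one; this requires a dimension count on the measurement variety and an algebraic-geometric argument that exploits genericity to promote a tangent direction to a path of inequivalent configurations. I expect this promotion step to be the principal difficulty of the whole theorem.
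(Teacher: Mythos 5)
Your proposal cannot really be compared against a proof in the paper, because the paper does not prove this theorem: it is imported wholesale, with sufficiency credited to Connelly \cite{cn2} and necessity to Gortler--Healy--Thurston \cite{ght}. Judged on its own terms, your text is a roadmap rather than a proof --- you explicitly defer both cruxes (``exactly the content of Connelly's theorem,'' ``the heart of the Gortler--Healy--Thurston argument''), and the necessity direction in particular is a multi-step algebraic-geometric argument about the measurement variety that no short sketch can discharge. That deferral is a genuine gap, but an honest and correctly located one on the necessity side.

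On the sufficiency side, however, your sketch misidentifies the missing ingredient, and the route as stated would fail. You reduce everything to the implication $E_\omega(q)=0 \Rightarrow q^{(k)}\in\ker\Omega$ and say the task is to ``supply the needed positivity at a generic configuration.'' But a maximal-rank stress matrix at a generic realization is in general \emph{not} positive semidefinite, and Connelly's theorem does not assert or supply any such positivity --- PSD stress matrices are the mechanism behind super stability and universal rigidity, a strictly stronger conclusion than generic global rigidity. Connelly's actual argument bypasses the energy inequality: one shows that if $p$ is generic and $f_G(q)=f_G(p)$, then every equilibrium stress of $G(p)$ is also an equilibrium stress of $G(q)$ (a semicontinuity/genericity argument about $\ker df_G(\cdot)^T$ along the fiber of the measurement map, in the spirit of Proposition~\ref{prop:rmstress}), whence $\Omega q^{(k)}=0$ \emph{directly}, so $q$ is an affine image $Ap+b$ of $p$. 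Even then, your final step ``since $p$ is generic, $A$ must be orthogonal'' is not automatic: an affine map preserves the edge lengths exactly when all edge directions lie on the conic at infinity $\{x : x^T(I-A^TA)x=0\}$, and one needs Connelly's separate lemma that a generic framework admitting a maximal-rank stress has edge directions lying on no such conic. One further caution: you propose to upgrade ``for some generic realization'' to ``for every'' by invoking that generic global rigidity is a generic property of $G$ --- but that fact is itself a corollary of the hard (GHT) direction of this very theorem, so using it as a preliminary step is circular; only the rank-semicontinuity half of your preliminary remark is elementary.
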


Connelly showed that this condition is sufficient; Gortler, Healy and Thurston showed that it is necessary as well, therefore implying that global rigidity is a generic property of the graph.  Furthermore, Gortler, Healy, and Thurston proposed a randomized algorithm to efficiently check if a graph is \emph{generically globally rigid} (GGR).  Given a graph, randomize its coordinates and create its rigidity matrix modulo a large prime.  Due to Proposition \ref{prop:rmstress}, we can find a random stress by selecting random vectors in $\ker(df_G(p)^T)$. Turn this stress into a stress matrix and check its rank; with no false positives and very few false negatives, this returns whether or not the graph is GGR.  We used this algorithm, as well as the algorithm described before, to experimentally check whether or not graphs were generically locally and globally rigid.

However, this is not an intuitive way of determining generic global rigidity, and a simpler process has eluded many mathematicians.   Some necessary conditions for generic global rigidity have been established by Hendrickson.  We define an edge of a framework as \emph{redundant} if one can remove it and be left with a locally rigid framework.  A framework is \emph{redundantly rigid} if all of its edges are redundant. We say that a graph is \emph{generically redundantly rigid} (GRR) if any of its generic frameworks are redundantly rigid.
\begin{theorem}[Hendrickson \cite{he}]\label{thm:Hendrickson}
If a graph in $\mathbb{R}^d$ has at least $d+2$ vertices and is generically globally rigid, then it is both generically redundantly rigid and vertex $(d+1)$-connected.
\end{theorem}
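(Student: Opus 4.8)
The plan is to prove the two conclusions separately, in each case by contraposition, and to lean throughout on the elementary fact that a generically rigid graph in $\mathbb{R}^d$ has minimum degree at least $d$. This bound follows immediately from Theorem~\ref{thm:rigidity}: a vertex with fewer than $d$ neighbors contributes at most $d-1$ edge directions to its block of the rigidity matrix, so some nonzero velocity at that vertex, with all others held fixed, lies in the kernel, yielding a nontrivial infinitesimal flex. Since global rigidity implies local rigidity, I may assume $G$ is generically rigid, hence of minimum degree $\ge d$, from the start, and that $G(p)$ affinely spans $\mathbb{R}^d$ for generic $p$.

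For $(d+1)$-connectivity, suppose toward a contradiction that a set $S$ of at most $d$ vertices separates a generic framework $G(p)$ into nonempty parts $A$ and $B$ with no edges between them (for $v \ge d+2$ this is exactly the failure of $(d+1)$-connectivity). Because $|S| \le d$, the points $\{p_s : s \in S\}$ span an affine subspace of dimension at most $d-1$ and so lie on a hyperplane $H$; reflecting the vertices of $B$ across $H$ while fixing $A \cup S$ produces a realization $q$. Every edge lies inside $A \cup S$ or inside $B \cup S$, and the reflection $\sigma_H$ is an isometry fixing $H \supseteq \{p_s\}$ pointwise, so $f_G(q) = f_G(p)$. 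It remains to see that $q$ is \emph{not} congruent to $p$, and here the degree bound does the work: choosing any $a \in A$, its $\ge d$ neighbors all lie in $A \cup S$, so $A \cup S$ contains at least $d+1$ points, which for generic $p$ affinely span $\mathbb{R}^d$. Any isometry carrying $p$ to $q$ must fix $A \cup S$ pointwise, hence be the identity, forcing $p_B \subseteq H$, which is impossible for generic $p$ with $B$ nonempty. Thus $q$ is an equivalent but noncongruent realization, contradicting global rigidity.

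For generic redundant rigidity, suppose some edge $e = \{u,w\}$ is not redundant, so $G - e$ is flexible while $G$ is rigid; then removing $e$ drops the rank of the rigidity matrix by exactly one. Fix a generic $p$ and consider the configuration space $M = \{q : f_{G-e}(q) = f_{G-e}(p)\}$ modulo Euclidean congruence. A dimension count from Theorem~\ref{thm:rigidity} gives $\dim M = \binom{d+1}{2}+1$, so $M/\!\cong$ is one-dimensional near $[p]$, and the squared-length function $\ell(q) = |q_u - q_w|^2$ is a submersion there, since its differential is (up to scale) the extra independent rigidity-matrix row contributed by $e$, which is nonzero on $\ker df_{G-e}(p)$. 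Because $G - e$ is connected — a generically rigid graph in $\mathbb{R}^d$ with $d \ge 2$ has no bridges — the fixed edge lengths bound the framework, so the component of $[p]$ is a compact connected $1$-manifold, i.e.\ a circle. On this circle $\ell$ is continuous and nonconstant with $[p]$ a regular point, so the value $\ell(p)$ is attained at some second point $[p'] \neq [p]$. That point is a realization of $G$ itself with the same edge lengths as $p$ but not congruent to it, again contradicting global rigidity.

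The main obstacle is the global topological step in the redundant-rigidity argument: passing from the local, infinitesimal information that $\ell$ is a submersion at $[p]$ to the existence of a genuinely distinct second realization requires controlling the entire connected component of $[p]$. I would secure this by invoking genericity to guarantee that $[p]$ is a smooth point of the real algebraic configuration variety, using connectedness of $G-e$ for compactness, and then running the intermediate-value (crossing-parity) argument on the resulting circle; the delicate points are verifying smoothness and compactness of that component and confirming that the second preimage of $\ell(p)$ is truly distinct from $[p]$ rather than coinciding with it. By contrast, the connectivity half is comparatively routine once the minimum-degree bound is in hand.
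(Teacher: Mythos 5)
The paper does not prove this statement at all: it is imported as Hendrickson's theorem with a citation to \cite{he}, so there is no internal proof to compare against. What you have written is in substance a reconstruction of Hendrickson's original argument: the reflection-through-a-hyperplane trick for $(d+1)$-connectivity and the flexing-circle, intermediate-value argument for redundant rigidity. Your connectivity half is correct and essentially complete: the minimum-degree bound from Theorem~\ref{thm:rigidity}, the observation that $A \cup S$ contains at least $d+1$ generic points and hence affinely spans, and the conclusion that any congruence matching $p$ to $q$ must be the identity all work as stated, with the hypothesis $v \geq d+2$ used exactly where it is needed (to make a flex supported at a single vertex nontrivial, and to reduce failure of $(d+1)$-connectivity to the existence of a cutset of size at most $d$).

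The redundant-rigidity half follows the right strategy but has one genuine gap, and the repair you propose does not close it. That $[p]$ is a smooth point of the fiber $M = f_{G-e}^{-1}\bigl(f_{G-e}(p)\bigr)$ is automatic from the maximal rank of $df_{G-e}(p)$ and is purely local; what the circle argument requires is that the \emph{entire} connected component of $p$ in $M$ be a smooth manifold, i.e., that \emph{every} point $q$ of the fiber be a regular point of $f_{G-e}$. A priori the component could contain rank-deficient configurations (for instance, realizations lying in a hyperplane, which are always critical when $v \geq d+2$), at which the fiber can be singular or branch, and then ``compact connected $1$-manifold, hence a circle'' collapses. The standard fix, and the one Hendrickson actually uses, is global: the critical set of $f_{G-e}$ is an algebraic set defined over $\mathbb{Q}$ whose image is a semialgebraic set of dimension strictly smaller than that of the image of the regular points, so a generic $p$ has $f_{G-e}(p)$ outside this image, and the whole fiber consists of regular points. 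This same statement also settles two subsidiary issues you elide: it keeps degenerate hyperplane configurations out of the fiber, so the full congruence group --- reflections included --- acts freely on it and the quotient is a manifold (without this, the quotient could fail to be a circle, or the second crossing point could be congruent to $p$); and compactness is best obtained by pinning translations (fix one vertex) so that connectedness of $G - e$ bounds the component, rather than by quotienting first. With the regular-value statement in hand, your submersion computation (the row of $e$ is independent of the rows of $df_{G-e}(p)$, so $d\ell$ does not vanish on the tangent space) and the two-sided intermediate-value argument on the circle are correct; one last small patch is that for $d=1$ you should deduce ``no bridges'' from the already-proved $2$-connectedness, since rigidity alone does not exclude bridges there.
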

From now on, when we use the term $k$-connected, we mean vertex $k$-connected.  In $\mathbb{R}^1$, the two conditions of Theorem \ref{thm:Hendrickson} are equivalent to $2$-connectedness, and they are also sufficient for generic global rigidity.  In $\mathbb{R}^2$, due to results from Connelly \cite{cn2}, Jackson and Jord\'an \cite{jj}, we know that the conditions are sufficient as well.  Hendrickson conjectured that they are sufficient in all dimensions. However, Connelly \cite{cn} found the counterexample of $K_{5,5}$ in $\mathbb{R}^3$. He also generalized this into a class of complete bipartite graphs.

\begin{theorem}[Connelly \cite{cn}]\label{thm:bipartite}
Any complete bipartite graph $K_{a,b}$ in $\mathbb{R}^d$ such that $a + b = \binom{d+2}{2}$ and $a, b \geq d+2$ is $(d+1)$-connected and generically redundantly rigid, but not generically globally rigid.
\end{theorem}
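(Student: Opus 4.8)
The plan is to treat the three assertions—$(d+1)$-connectedness, generic redundant rigidity, and failure of generic global rigidity—separately, following the stress-and-quadric analysis of Bolker and Roth \cite{br} for the latter two. Connectedness is immediate: the vertex connectivity of $K_{a,b}$ equals $\min(a,b)$, and since $a,b\ge d+2$ we have $\min(a,b)\ge d+2>d+1$, so $K_{a,b}$ is $(d+1)$-connected. For the rigidity statements I would first pin down the equilibrium stresses of a generic realization. Writing the two color classes in homogeneous coordinates as $\hat p_i=(p_i,1)\in\mathbb R^{d+1}$ and $\hat q_j=(q_j,1)\in\mathbb R^{d+1}$, and collecting them as columns of matrices $P,Q$, the bipartite structure forces every stress matrix to have the block form $\Omega=\left(\begin{smallmatrix}D_A & W\\ W^T & D_B\end{smallmatrix}\right)$ with $D_A,D_B$ diagonal. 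The stress conditions (Proposition \ref{prop:rmstress}) then read $QW^T=P\Lambda_A$ and $PW=Q\Lambda_B$, where $\Lambda_A=-D_A$ and $\Lambda_B=-D_B$. Multiplying these out shows that the symmetric matrix $G:=PWQ^T$ satisfies $G=\sum_i(\Lambda_A)_{ii}\,\hat p_i\hat p_i^T=\sum_j(\Lambda_B)_{jj}\,\hat q_j\hat q_j^T$; that is, $G$ lies in $U_A\cap U_B$, where $U_A,U_B$ denote the spans, in the space of symmetric $(d+1)\times(d+1)$ matrices, of the rank-one Veronese matrices coming from $P$ and from $Q$.

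Here the hypothesis $a+b=\binom{d+2}{2}$ enters decisively, and establishing its consequence is the crux. The space of symmetric $(d+1)\times(d+1)$ matrices has dimension exactly $\binom{d+2}{2}$, and the key lemma I would prove is that for generic points the $a+b=\binom{d+2}{2}$ matrices $\hat p_i\hat p_i^T$ and $\hat q_j\hat q_j^T$ are linearly independent, hence a basis—equivalently, that $\binom{d+2}{2}$ generic points impose independent conditions on quadrics. I expect this Veronese-independence statement, a general-position fact about the quadratic Veronese embedding, to be the main obstacle; the rest is bookkeeping. Granting it, $U_A\cap U_B=\{0\}$, so $G=0$, and independence forces $\Lambda_A=\Lambda_B=0$, i.e. $D_A=D_B=0$. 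The stress conditions then collapse to $PW=0$ and $QW^T=0$, so $W$ has column space inside $\ker P$ (dimension $a-d-1$) and row space inside $\ker Q$ (dimension $b-d-1$). Consequently the stress space has dimension $(a-d-1)(b-d-1)$, which one checks equals $ab-\bigl((a+b)d-\binom{d+1}{2}\bigr)$.

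This last identity shows that $\rank df_{K_{a,b}}$ attains its maximum $(a+b)d-\binom{d+1}{2}$, so $K_{a,b}$ is generically locally rigid by Theorem \ref{thm:rigidity}. For redundant rigidity I would note that $(a-d-1)(b-d-1)\ge 1$ precisely because $a,b\ge d+2$, so nonzero stresses exist; moreover generically no entry $W_{ij}$ is identically zero across the stress space, so every edge lies in the support of some stress. Hence every row of the rigidity matrix is dependent on the others, and deleting any single edge preserves the maximal rank—equivalently, by edge-transitivity of $K_{a,b}$ it suffices to treat one edge. Thus $K_{a,b}$ is generically redundantly rigid.

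Finally, for the failure of global rigidity I would bound the rank of an arbitrary stress matrix. With $D_A=D_B=0$ we have $\rank\Omega=2\,\rank W$, and since $\rank W\le\min(\dim\ker P,\dim\ker Q)$ we obtain $\rank\Omega\le 2\bigl(\min(a,b)-d-1\bigr)$. A direct comparison, using $a+b=\binom{d+2}{2}$ and $\min(a,b)\le\max(a,b)$, gives $2(\min(a,b)-d-1)<(a+b)-(d+1)=v-(d+1)$, so every stress matrix of a generic realization has nullity strictly greater than $d+1$. By Theorem \ref{thm:strssmtrx} no generic realization admits a stress matrix of nullity $d+1$, and therefore $K_{a,b}$ is not generically globally rigid, completing the proof.
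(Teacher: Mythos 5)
Your proposal cannot be compared against an internal argument, because the paper offers none: Theorem \ref{thm:bipartite} is quoted directly from Connelly \cite{cn} without proof. Judged on its own merits, your reconstruction is essentially correct, and it in fact recovers the machinery the paper develops later from \cite{br}: your dimension count $(a-d-1)(b-d-1)$ for the stress space is exactly Corollary \ref{cor:bipstress}; your derivation of $D_A=D_B=0$ is the fact, credited to Bolker and Roth in Section 3, that generic bipartite stress matrices vanish on the diagonal; and your edge-transitivity argument for redundancy is the same symmetry argument the paper uses before Lemma \ref{lemma:section4}, combined with Corollary \ref{cor:redrigid}. The ingredient you flag as the crux --- that $\binom{d+2}{2}$ generic points have linearly independent Veronese images $\hat{x}\hat{x}^T$ --- is precisely the statement $\dim D^2(C)=0$ for $|C|\le\binom{d+2}{2}$ in Remark \ref{remark:br}, which the paper likewise takes from \cite{br} without proof; it is not a genuine obstacle, since the matrices $\hat{x}\hat{x}^T$ for $x\in\mathbb{R}^d$ span the whole $\binom{d+2}{2}$-dimensional space of symmetric $(d+1)\times(d+1)$ matrices (a quadratic form vanishing at every point is zero), so independence of $\binom{d+2}{2}$ generic such matrices is a nonempty Zariski-open, hence generic, condition. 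Your final rank bound is valid and is the part that genuinely replaces Connelly's argument: with $D_A=D_B=0$ one has $\rank\Omega=2\rank W\le 2\bigl(\min(a,b)-d-1\bigr)$, and the inequality $2\bigl(\min(a,b)-d-1\bigr)<(a+b)-(d+1)$ reduces to $\min(a,b)<\max(a,b)+d+1$, which always holds, so every stress matrix of a generic framework has nullity strictly greater than $d+1$ and Theorem \ref{thm:strssmtrx} rules out generic global rigidity. Two points deserve to be made explicit rather than left as ``bookkeeping'': the identity $(a-d-1)(b-d-1)=ab-\bigl((a+b)d-\binom{d+1}{2}\bigr)$ holds if and only if $a+b=\binom{d+2}{2}$, so this is exactly where the vertex-count hypothesis enters local rigidity; and for redundancy, the clean justification that every entry position of $W$ is achievable is that for each $i$ some affine dependency $u\in\ker P$ has $u_i\neq 0$ (otherwise the dependencies of all $a$ points would coincide with those of the remaining $a-1$ points, a space of dimension $a-d-2<a-d-1$), whence $W=uv^T$ gives a stress nonzero on any prescribed edge.
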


We denote all graphs that violate Hendrickson's sufficiency conjecture and are not GGR as \emph{generically partially rigid} (GPR).  In addition to these complete bipartite graphs, the process of coning can also create GPR graphs.  \emph{Coning} a graph $G$ is the process of adding a vertex to $G$ and connecting it to every other vertex in $G$.
\begin{theorem}[Connelly and Whiteley \cite{cw}]
For any graph $G$, coning preserves the generic local, redundant, and global rigidity of $G$ from $\mathbb{R}^d$ to $\mathbb{R}^{d+1}$. It also transfers $(d+1)$-connectedness to $(d+2)$-connectedness.
\end{theorem}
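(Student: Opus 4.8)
The plan is to reduce every assertion to the linear algebra of the rigidity matrix and the stress space, and then to exploit a single geometric correspondence between realizations of $G$ in $\mathbb{R}^d$ and realizations of the cone $\hat{G} = G * c$ in $\mathbb{R}^{d+1}$. Write $v = |V|$, place the apex $c$ at the origin of $\mathbb{R}^{d+1}$, and lift each vertex $v_i$ to $\hat{p}_i = (p_i, h_i)$ for heights $h_i$. First I would record the edge data: for an original edge $\{i,j\}$ the difference is $\hat{p}_i - \hat{p}_j = (p_i - p_j,\, h_i - h_j)$, while a cone edge contributes $\hat{p}_i - \hat{p}_c = (p_i, h_i)$. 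A direct inspection of the equilibrium conditions then shows that the last-coordinate equation at $v_i$ expresses the cone-edge stress $\omega_{ic}$ as a linear function of the stresses $\omega_{ij}$ on the original edges, so that for generic heights one obtains a linear isomorphism between the stress space of $G(p)$ and the stress space of $\hat{G}(\hat{p})$. Dually, the same computation lifts an infinitesimal flex of $G(p)$ to one of $\hat{G}(\hat{p})$ and conversely, so that $\rank df_{\hat{G}}(\hat{p}) = \rank df_G(p) + v$; since $\binom{d+2}{2} - \binom{d+1}{2} = d+1$ while $(v+1)(d+1) - vd = v + d + 1$, the cone adds exactly the $v$ spokes needed to keep an isostatic count isostatic. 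By Theorem~\ref{thm:rigidity} this transfers generic local rigidity in both directions.

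For redundant rigidity I would use the matroidal reformulation implicit in Proposition~\ref{prop:rmstress}: an edge is redundant precisely when it lies in the support of some equilibrium stress, equivalently when its row of $df_G$ lies in the span of the others. Assuming $G$ is GRR, every edge of $G$ is covered by some stress; transporting these stresses through the isomorphism above keeps each original edge covered in $\hat{G}$, and the formula $\omega_{ic} = \tfrac{1}{h_i}\sum_{j} \omega_{ij}(h_j - h_i)$ shows that for generic heights every spoke is covered as well, since redundant rigidity forces each vertex to be incident to an edge in the support of some stress. Hence all $e + v$ edges of $\hat{G}$ are redundant. For global rigidity I would invoke Theorem~\ref{thm:strssmtrx}: the stress correspondence sends a stress matrix $\Omega$ of $G(p)$ to a stress matrix $\hat{\Omega}$ of $\hat{G}(\hat{p})$ of one larger size, and the key point is that the nullity is shifted by exactly one, so that $\hat{\Omega}$ has nullity $d+2$ if and only if $\Omega$ has nullity $d+1$. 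I would verify this by checking that the affine relations satisfied by the rows of $\Omega$, recorded just after the definition of the stress matrix, correspond under coning to the affine relations for $\hat{\Omega}$, with the apex contributing precisely one extra dimension to the kernel.

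The main obstacle, and the step I would spend the most care on, is that Theorems~\ref{thm:rigidity} and~\ref{thm:strssmtrx} are statements about \emph{generic} realizations, whereas the coned configuration $\hat{p}$ is never generic in $\mathbb{R}^{d+1}$: the apex position and the shared structure of the lifts impose algebraic relations among the coordinates. I would bridge this gap with two ingredients. The first is upper semicontinuity of the rank of the rigidity matrix, so that the rank computed at the special coned configuration is a lower bound for the generic rank and therefore forces the generic rank once the isostatic count above is tight. The second is the projective invariance of the stress-matrix nullity, which guarantees that the nullity computed at $\hat{p}$ agrees with the nullity at a genuinely generic realization of $\hat{G}$; this is exactly what is needed to feed the correspondence into Theorem~\ref{thm:strssmtrx}.

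Finally, the connectivity claim is purely combinatorial. To see that $\hat{G}$ is $(d+2)$-connected, remove any set $S$ of $d+1$ vertices and show the remainder is connected. If the apex $c \notin S$, then $c$ survives and is adjacent to every remaining vertex, so the graph is connected through $c$. If instead $c \in S$, then $S$ deletes only $d$ vertices of $G$, and $(d+1)$-connectedness of $G$ guarantees that $G$ minus any $d$ vertices is connected. Either way no removal of $d+1$ vertices disconnects $\hat{G}$, which is the desired $(d+2)$-connectedness.
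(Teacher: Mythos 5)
The paper itself gives no proof of this statement---it is imported wholesale from Connelly and Whiteley \cite{cw}---so your sketch can only be judged on its own merits, and it has a genuine gap at its central step. Place the apex at the origin and write $\hat{p}_i = (p_i, h_i)$. The equilibrium condition at $v_i$ in the cone splits into
\[
\sum_{j} \hat{\omega}_{ij}(p_j - p_i) - \omega_{ic}\,p_i = 0
\qquad\text{and}\qquad
\sum_{j} \hat{\omega}_{ij}(h_j - h_i) - \omega_{ic}\,h_i = 0.
\]
Your formula $\omega_{ic} = \frac{1}{h_i}\sum_j \omega_{ij}(h_j - h_i)$ is exactly the second equation, but then the first $d$ coordinates are \emph{not} the equilibrium condition for $G(p)$: they carry the extra term $-\omega_{ic}\,p_i$. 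So for generic heights the map ``keep $\omega_{ij}$ on the old edges, solve for the spokes'' is not an isomorphism of stress spaces; a cone stress restricts to a stress of $G(p)$ only when every $\omega_{ic}$ vanishes. The two consistent repairs pull in opposite directions. If you flatten the heights ($h_i \equiv 1$), the correspondence becomes clean, but then every spoke stress is forced to be zero---so your argument that the spokes are redundant evaporates---and the configuration is badly non-generic. If instead you keep generic heights, the genuine Connelly--Whiteley correspondence projects from the apex ($p_i^* = p_i/h_i$) and \emph{rescales} the stresses (roughly $\omega^*_{ij} = h_i h_j \hat{\omega}_{ij}$), and one must then prove that central projection of a generic cone configuration yields a generic base configuration. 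Your proposal does neither, and the spoke-redundancy inference built on the faulty formula is further weakened by the fact that $\hat{\omega}$ depends on the heights, so ``generic $h$'' does not immediately force $\sum_j \hat{\omega}_{ij}(h_j - h_i) \neq 0$.

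Even granting a corrected correspondence, your two bridging tools do not cover the direction the paper actually uses. Rank semicontinuity transfers rigidity \emph{upward}: full rank at the flat coned configuration forces full generic rank, so GLR of $G$ gives GLR of the cone---that part of your sketch is sound, and the arithmetic $\rank df_{\hat{G}} = \rank df_G + v$ checks out. But the corollary following this theorem needs coning a non-GGR graph to yield a non-GGR graph, i.e.\ an \emph{upper} bound on the rank of every stress matrix at a generic realization of the cone; semicontinuity points the wrong way there, and ``projective invariance of the stress-matrix nullity'' does not close the gap either, because the projective orbit of the special coned configuration is a proper subvariety that never reaches a generic realization of $\hat{G}$. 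What \cite{cw} actually proves is that at a genuinely generic realization of the cone, projection from the apex produces a generic realization of $G$ together with a rank-preserving bijection of stress matrices, after which Theorem \ref{thm:strssmtrx} applies in both directions; this is the heart of the matter and is asserted rather than established in your sketch. Your connectivity argument, by contrast, is correct and complete as written.
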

\begin{corollary}
Coning a generically partially rigid graph in $\mathbb{R}^d$ creates a generically partially rigid graph in $\mathbb{R}^{d+1}$.
\end{corollary}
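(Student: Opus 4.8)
The plan is to observe that this corollary is an immediate consequence of the Connelly--Whiteley coning theorem stated just above, once we carefully unpack what it means for $G$ to be generically partially rigid. By definition, a GPR graph $G$ in $\mathbb{R}^d$ is one that satisfies Hendrickson's necessary conditions yet fails to be generically globally rigid; concretely, $G$ is (i) vertex $(d+1)$-connected, (ii) generically redundantly rigid in $\mathbb{R}^d$, and (iii) \emph{not} generically globally rigid in $\mathbb{R}^d$. Writing $\hat{G}$ for the cone of $G$, I would verify the three corresponding properties for $\hat{G}$ in $\mathbb{R}^{d+1}$ one at a time, in each case reading off the relevant clause of the Connelly--Whiteley theorem.

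First, the connectivity clause transfers $(d+1)$-connectedness to $(d+2)$-connectedness, so property (i) for $G$ yields that $\hat{G}$ is $(d+2)$-connected. Second, the theorem asserts that coning preserves generic redundant rigidity from $\mathbb{R}^d$ to $\mathbb{R}^{d+1}$, so property (ii) gives that $\hat{G}$ is generically redundantly rigid in $\mathbb{R}^{d+1}$. Together these two facts show that $\hat{G}$ satisfies Hendrickson's necessary conditions in dimension $d+1$, which is the ``satisfies the sufficiency conjecture's hypotheses'' half of being GPR.

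Third, I would invoke the preservation of generic global rigidity, which I read as a biconditional: $\hat{G}$ is GGR in $\mathbb{R}^{d+1}$ if and only if $G$ is GGR in $\mathbb{R}^d$. In particular, generic global rigidity of $\hat{G}$ would entail generic global rigidity of $G$; contrapositively, since $G$ fails to be GGR by property (iii), the cone $\hat{G}$ must also fail to be GGR in $\mathbb{R}^{d+1}$. Combining the three conclusions, $\hat{G}$ is $(d+2)$-connected and generically redundantly rigid but not generically globally rigid, which is exactly the definition of a generically partially rigid graph in $\mathbb{R}^{d+1}$.

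The only place requiring genuine care, as opposed to routine matching of definitions, is the direction used in the third step. The inheritance of Hendrickson's two conditions is a purely formal translation, but the failure of global rigidity relies on the fact that coning cannot \emph{manufacture} global rigidity, i.e.\ cannot produce a stress matrix of nullity $d+2$ for $\hat{G}$ when none of nullity $d+1$ existed for $G$. This is precisely the content of the Connelly--Whiteley equivalence for global rigidity, so no separate argument is needed; the entire substance of the corollary lives inside that cited theorem, and the remaining work is simply confirming that the three defining conditions line up correctly under the dimension shift from $d$ to $d+1$.
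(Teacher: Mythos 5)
Your proof is correct and is exactly the argument the paper intends: the corollary is stated without proof as an immediate consequence of the Connelly--Whiteley theorem, and your clause-by-clause unpacking of the GPR definition matches that intent. You were also right to flag the one non-trivial point---that ``preserves global rigidity'' must be read as a biconditional so that failure of generic global rigidity descends to the cone---which is indeed the content of Connelly and Whiteley's result \cite{cw}.
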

However, so far the only documented graphs that are GPR are complete bipartite graphs and their conings.  In a very recent paper \cite[8.3]{cn3}, Connelly posed some questions about the nature of GPR graphs in higher dimensions.  We will present two new classes of GPR graphs and answer two of Connelly's questions.  We find two GPR graphs in $\mathbb{R}^4$ and infinitely many in $\mathbb{R}^d$ for each $d \ge 5$.

In section $2$, we will present one of our main results for a class of graphs called $k$-chains and present some simple proofs, including proving when these graphs are GGR.  In section $3$, we determine under what conditions these graphs are GLR.  In section $4$, we do the same for GRR and prove the main result from section $2$.  In section $5$, we introduce a new graph construction and prove that it generates infinitely many GPR graphs.  Finally, we present some conjectures for further exploration in section $6$.
\end{section}

%-----------------------------------------------------------------------------------------------------------------------------------------------------------------------------------------------

\begin{section}{Main Result for $k$-Chains}
For positive integers $a_1, a_2, \ldots, a_k $, the \emph{$k$-chain} $C_{a_1,a_2, \ldots, a_k}$ is the graph constructed as follows.  The vertex set $V$ is the union of $k$ disjoint sets of vertices $A_1$, $A_2$,$\ldots$, $A_k$ such that $|A_i| = a_i$.  For $1 \leq i \leq k-1$, there are edges between every vertex in $A_i$ and $A_{i+1}$, and the graph has no other edges.  Note that a 2-chain is simply a complete bipartite graph and that the 3-chain $C_{a_1, a_2, a_3}$ is the complete bipartite graph $K_{a_1+a_3, a_2}$. In particular, Connelly's GPR bipartite graphs can be characterized as $3$-chains. We are interested in characterizing when a $k$-chain is GPR.
\begin{theorem}
\label{main}
A $k$-chain $C_{a_1, a_2, \cdots, a_k}$ with $k \geq 4$ and $ \binom{d+2}{2}$ vertices is generically partially rigid if and only if it satisfies all of the following conditions:
\begin{enumerate}
\item $a_2, a_3, \ldots, a_{k-1} \geq d + 1$;
\item $a_2, a_{k-1} \geq d + 2$; and
\item there is no $i$ such that $a_i = a_{i+1} = d + 1$.
\end{enumerate}
\end{theorem}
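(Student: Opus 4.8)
The plan is to unwind the definition of generic partial rigidity: a graph is GPR exactly when it satisfies the hypotheses of Hendrickson's Theorem~\ref{thm:Hendrickson} --- it is vertex $(d+1)$-connected and generically redundantly rigid --- yet fails to be generically globally rigid. I would therefore establish three facts and match them to the numbered conditions: that $(d+1)$-connectivity is equivalent to condition~(1); that generic redundant rigidity is equivalent to (1), (2) and (3) taken together; and that whenever (1) and (2) hold the chain is automatically \emph{not} GGR, so that these chains are genuine counterexamples. Condition~(3), as we shall see, enters only through redundant rigidity.

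The failure of global rigidity is the cleanest step, so I would dispose of it first. Color the layers by the parity of their index and set $A=A_1\cup A_3\cup\cdots$ and $B=A_2\cup A_4\cup\cdots$. Every edge of the chain joins two consecutive layers, hence joins $A$ to $B$, so the chain is a spanning subgraph of the complete bipartite graph $K_{|A|,|B|}$. A short count using $k\ge 4$ with (1) and (2) gives $|A|,|B|\ge d+2$: for $k$ even one has $a_2\in B$ and $a_{k-1}\in A$, each at least $d+2$, while for $k$ odd the class $B$ still contains $a_2\ge d+2$ and $A$ contains $a_1+a_3\ge 1+(d+1)$. Since $|A|+|B|=\binom{d+2}{2}$, Connelly's Theorem~\ref{thm:bipartite} shows $K_{|A|,|B|}$ is not GGR; and because adding edges cannot destroy global rigidity, a non-GGR supergraph forces its spanning subgraph to be non-GGR, so the chain is not GGR.

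For connectivity I would argue combinatorially: deleting an interior layer $A_i$ separates $A_1,\dots,A_{i-1}$ from $A_{i+1},\dots,A_k$, and each end vertex has degree $a_2$ or $a_{k-1}$, so $(d+1)$-connectivity forces every interior layer to have at least $d+1$ vertices, which is condition~(1); the layered structure makes (1) sufficient as well. For generic local rigidity I would use the bipartite flex calculus of Bolker and Roth: when both sides of a junction carry at least $d+1$ points, every infinitesimal flex of that junction is affine on each side and arises as the gradient flex of a quadric through the junction's points. Condition~(1) makes each shared interior layer large enough to force the affine parts on adjacent junctions to coincide, so any flex of the whole chain is the gradient flex of a \emph{single} quadric through all $\binom{d+2}{2}$ vertices; generically no such quadric exists, and by Theorem~\ref{thm:rigidity} the chain is generically locally rigid.

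The core of the theorem, and the step I expect to be hardest, is the redundant-rigidity equivalence, which I would run through stresses. By Proposition~\ref{prop:rmstress} an edge is redundant precisely when it lies in the support of some equilibrium stress (equivalently, deleting its row leaves the rank unchanged), so the chain is GRR iff it is GLR and the supports of its stresses cover every edge. Writing the stress matrix in the block-tridiagonal form dictated by the layers, I would analyze coverage junction by junction. At an end junction, deleting an edge at a vertex of a short end layer lowers that vertex's degree, and unless $a_2$ (respectively $a_{k-1}$) is at least $d+2$ this revives the latent flex of the remaining sub-chain; this gives both the necessity and the sufficiency of condition~(2). The delicate point is condition~(3): I must show that when $a_i=a_{i+1}=d+1$ the edges of that interior junction lie outside the support of \emph{every} equilibrium stress, so that such a junction is never redundant, while no other pattern of sizes produces this phenomenon. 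This amounts to pinning down exactly where the (generically low-dimensional) stress space of the chain is supported, using the quadric description of the bipartite stress at each junction together with the alternating sign pattern forced along the chain, and it is precisely this support computation at a doubly-minimal junction that I anticipate as the main obstacle.
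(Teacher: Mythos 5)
Your skeleton matches the paper's: unwind GPR into $(d+1)$-connectivity (your argument is essentially Proposition~\ref{dponeconnected}), non-global-rigidity via viewing the chain as a spanning subgraph of a complete bipartite graph with $\binom{d+2}{2}$ vertices and both sides of size at least $d+2$ (Proposition~\ref{globrigid}; note the paper gets this count from condition~(1) and $k\ge 4$ alone), and a stress-support characterization of redundancy (Corollary~\ref{cor:redrigid}). But at the two technical steps your plan has genuine gaps. For generic local rigidity, your junction-by-junction flex argument applies, by your own hypothesis, only to junctions $K_{A_i,A_{i+1}}$ with both layers of size at least $d+1$ --- yet the end junctions $K_{A_1,A_2}$ and $K_{A_{k-1},A_k}$ can have $a_1,a_k$ as small as $1$ (e.g.\ $C_{1,6,6,2}$ in $\mathbb{R}^4$, where with $k=4$ there is only \emph{one} interior junction, so there are no ``adjacent junctions'' to glue). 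Matching affine parts across shared interior layers yields at best a single quadric through the points of $A_2\cup\cdots\cup A_{k-1}$; nothing you say puts the points of $A_1$ and $A_k$ on it, and since $v-a_1-a_k<\binom{d+2}{2}$ a nonzero quadric through the interior points \emph{always} exists generically. So ``generically no such quadric exists'' does not finish the proof: the entire content of the lemma is the consistency analysis at the end vertices (which must succeed even when $a_2=d+1$, since GLR requires only condition~(1)), and that analysis is skipped. The paper avoids flexes altogether: it proves the ambient complete bipartite graph GLR via the Bolker--Roth stress count, then shows by Proposition~\ref{prop:redrigid} that the deleted edges can carry arbitrary stresses, using the explicit subgraphs $\Upsilon=C_{1,d+1,\ldots,d+1,1}$ (only the zero stress, by the ``staircase'' elimination with Remark~\ref{affine}) and $\Upsilon'$ (a $1$-dimensional stress space, by an edge count).

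For redundant rigidity you correctly reduce to showing the stress space is supported on every edge, but you then explicitly leave the support computation open (``precisely this support computation at a doubly-minimal junction \ldots the main obstacle''), so the core of the equivalence is acknowledged rather than proved. The missing idea is the paper's Lemma~\ref{lemma:covering}: the stress space of the chain is \emph{exactly} the span of the stress spaces of its $3$-chain cover $\{C_{a_i,a_{i+1},a_{i+2}}\}$, each of which is a complete bipartite graph $K_{a_i+a_{i+2},\,a_{i+1}}$; this is proved by an inclusion--exclusion dimension count against $e-vd+\binom{d+1}{2}$, using GLR. With that lemma the conditions fall out of Corollary~\ref{cor:bipstress}: the edges between $A_i$ and $A_{i+1}$ lie in only two $3$-chains, whose stress dimensions $(a_{i-1}+a_{i+1}-d-1)(a_i-d-1)$ and $(a_i+a_{i+2}-d-1)(a_{i+1}-d-1)$ both vanish precisely in the excluded cases $a_2=d+1$ (resp.\ $a_{k-1}=d+1$) and $a_i=a_{i+1}=d+1$, while in all other cases edge-transitivity of a complete bipartite graph with positive-dimensional stress space produces a stress nonzero on every edge of that $3$-chain. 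Without such a decomposition, your end-junction heuristic that deleting an edge ``revives the latent flex'' is not a proof --- a vertex left with degree $d$ at generic position is still infinitesimally pinned, so no flex follows from a degree count --- and the condition-(3) analysis has no mechanism behind it.
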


The proof will occupy much of the rest of the paper. For $3$-chains with $v = \binom{d+2}{2}$, one must add the additional condition that $a_1 + a_3 \geq d + 2 $.  Note that this condition holds for any $k$-chain with $k \geq 4$ that fulfills the conditions of Theorem \ref{main}.

There are no $k$-chains satisfying the conditions of Theorem \ref{main} in $\mathbb{R}^3$. For $\mathbb{R}^4$, $v = \binom{6}{2} = 15$, so the only GPR examples in $\mathbb{R}^4$ are $C_{1, 6, 6, 2}$ and $C_{1, 6, 7, 1}$ [Figure~\ref{fig:c1662c1671}].
\begin{figure}
\subfloat[]{\includegraphics[scale=1]{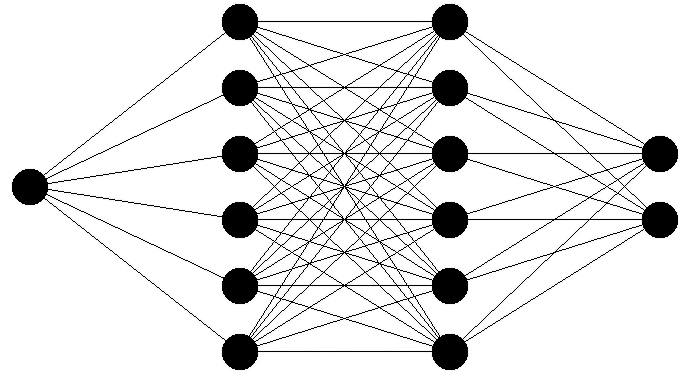}\label{fig:c1662}}\hspace{1in}
\subfloat[]{\includegraphics[scale=1]{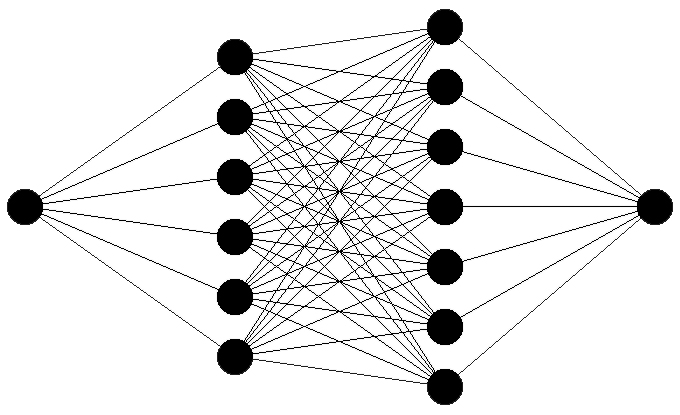}\label{fig:c1671}}
\caption{These are the only GPR $k$-chains in $\mathbb{R}^4$ with $\binom{d+2}{2}$ vertices.}
\label{fig:c1662c1671}
\end{figure}

\begin{proposition}
\label{dponeconnected}
A $k$-chain is $(d+1)$-connected if and only if it fulfills condition 1 of Theorem \ref{main}.
\end{proposition}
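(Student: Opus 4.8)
The plan is to work directly with vertex cuts, using the standard fact that a graph on more than $d+1$ vertices is $(d+1)$-connected if and only if deleting any set of at most $d$ vertices leaves a connected graph. Since $C_{a_1,\ldots,a_k}$ has $\binom{d+2}{2} > d+1$ vertices this characterization applies, and throughout I work in the regime $k \geq 4$ of Theorem~\ref{main}. I would prove the two implications separately: first that failure of condition 1 produces a small cut, and then that condition 1 rules out every cut of size at most $d$.

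For the forward implication I argue by contrapositive. Suppose condition 1 fails, so $a_i \leq d$ for some interior index $2 \leq i \leq k-1$. I claim $A_i$ is a vertex cut. Indeed, every edge of the chain joins consecutive layers, so the only edges meeting both $A_1 \cup \cdots \cup A_{i-1}$ and $A_{i+1} \cup \cdots \cup A_k$ are the edges incident to $A_i$. Deleting $A_i$ therefore disconnects the left part (nonempty, since $i \geq 2$ forces $A_1$ to survive) from the right part (nonempty, since $i \leq k-1$ forces $A_k$ to survive). This exhibits a cut of size $a_i \leq d$, so the chain is not $(d+1)$-connected. Equivalently, $(d+1)$-connectedness forces $a_i \geq d+1$ for every interior $i$, which is exactly condition 1.

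For the converse, assume condition 1 and let $S$ be any vertex set with $|S| \leq d$; I must show $C_{a_1,\ldots,a_k} - S$ is connected. Write $A_i' = A_i \setminus S$. Each interior layer satisfies $|A_i'| \geq a_i - |S| \geq (d+1) - d = 1$, so every one of $A_2', \ldots, A_{k-1}'$ is nonempty. Since consecutive layers induce a complete bipartite graph, any two nonempty consecutive layers are joined by all possible cross-edges; chaining these links across the $k-2 \geq 2$ interior layers shows that the \emph{core} $A_2' \cup \cdots \cup A_{k-1}'$ is connected. Finally, every surviving vertex of the end layer $A_1$ is adjacent to all of the nonempty set $A_2'$, and likewise every surviving vertex of $A_k$ is adjacent to all of $A_{k-1}'$, so $A_1'$ and $A_k'$ attach to the core. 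Hence $C_{a_1,\ldots,a_k} - S$ is connected for every such $S$, and the chain is $(d+1)$-connected.

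The only delicate point is the connectedness of the core, and this is where $k \geq 4$ is essential: the argument needs at least two interior layers so that the complete bipartite links actually tie the core together. For $k = 3$ there is a single interior layer $A_2'$, which carries no internal edges, so a deletion that kills both end layers can scatter $A_2'$ into isolated vertices; this is precisely why $3$-chains require the extra hypothesis $a_1 + a_3 \geq d+2$ noted after Theorem~\ref{main}. I expect verifying the nonemptiness of the interior layers and this core-connectivity step to be the substantive content, while the endpoint attachments are immediate from the complete bipartite structure.
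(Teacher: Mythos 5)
Your proof is correct and takes essentially the same approach as the paper, whose two-line argument removes a small interior layer $A_i$ to disconnect the chain and, conversely, observes that when every interior layer has at least $d+1$ vertices, no deletion of $d$ vertices can empty one, so the graph stays connected. You merely make explicit the chaining through consecutive complete bipartite links and treat the end layers more carefully (the paper's claim that removing $d$ vertices ``leaves at least one vertex in each independent set'' is literally false when $a_1$ or $a_k \le d$, though harmless since surviving end vertices attach to the core); one small aside: for $k=3$, connectivity alone requires only $a_1 + a_3 \ge d+1$, the paper's condition $a_1 + a_3 \ge d+2$ being needed for the redundant rigidity part of the GPR characterization rather than for connectivity.
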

\begin{proof}
If $a_2, \ldots, a_{k-1} \geq d+1$, then removing any $d$ vertices leaves at least one vertex in each independent set, so the graph remains connected.  If not, then for some $i$, $2 \leq i \leq k-1$, $a_i \leq d$, so one can remove $A_i$, disconnecting the graph.
%A $k$-chain fails to be $(d+1)$-connected only when one can entirely remove any $A_i$, $2 \leq i \leq k-1$.  This condition is precisely when $a_i$ is less than $d+1$.
%Each vertex in $A_i$ is connected to each vertex in $A_{i-1}$ and $A_{i+1}$.  If the graph does not fulfill condition 1, then some $A_i, 2 \leq i \leq k - 1$, has at most $d$ vertices.  Thus, one could remove all the vertices in $A_i$ and disconnect the graph.  However, if the graph fulfills condition 1, then even if one removed $d$ vertices from any given $A_i$, there would still be at least one vertex left that provides a path to the rest of the graph.
\end{proof}
\begin{proposition}
\label{globrigid}
Any $(d+1)$-connected $k$-chain with $k \geq 4$ and $\binom{d+2}{2}$ vertices is not generically globally rigid in $\mathbb{R}^d$.
\end{proposition}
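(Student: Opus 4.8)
The plan is to realize the $k$-chain as a spanning subgraph of one of Connelly's complete bipartite counterexamples and then invoke monotonicity of global rigidity under the addition of edges. First I would $2$-color the vertex set by the parity of the index of the part $A_i$ containing each vertex: set $U = A_1 \cup A_3 \cup \cdots$ and $W = A_2 \cup A_4 \cup \cdots$. Since edges of $C_{a_1,\ldots,a_k}$ only join $A_i$ to $A_{i+1}$, and consecutive indices have opposite parity, every edge runs between $U$ and $W$; hence $U$ and $W$ are independent sets and the chain is a spanning subgraph of the complete bipartite graph $K_{|U|,|W|}$ on the same vertex set, with $|U| + |W| = \binom{d+2}{2}$.

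Next I would verify that $K_{|U|,|W|}$ satisfies the hypotheses of Theorem \ref{thm:bipartite}, the only nontrivial point being $|U|, |W| \geq d+2$. Because the chain is $(d+1)$-connected, Proposition \ref{dponeconnected} gives $a_2, \ldots, a_{k-1} \geq d+1$. For $k \geq 4$ the middle indices $\{2, \ldots, k-1\}$ contain at least two consecutive values, hence indices of both parities, so each of $U$ and $W$ contains at least one middle part (of size $\geq d+1$); moreover each color class comprises at least two of the parts $A_i$ when $k \geq 4$, contributing at least one further vertex. Thus $|U|, |W| \geq (d+1) + 1 = d+2$, and Theorem \ref{thm:bipartite} shows that $K_{|U|,|W|}$ is not generically globally rigid.

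Finally I would transfer this to the subgraph. Global rigidity is monotone under adding edges: if a spanning subgraph $H \subseteq G$ is generically globally rigid, then so is $G$, since for generic $p$ any $q$ with $f_G(q) = f_G(p)$ also satisfies $f_H(q) = f_H(p)$ and hence $f_{K_v}(q) = f_{K_v}(p)$. Taking the contrapositive with $H = C_{a_1,\ldots,a_k}$ and $G = K_{|U|,|W|}$, the fact that the complete bipartite graph is not generically globally rigid forces the chain not to be either. The main thing to get right is the bookkeeping in the second step, namely confirming that $(d+1)$-connectivity together with $k \geq 4$ really does push both color classes up to size $d+2$, since this is exactly where the structural hypotheses enter; the reduction itself is soft. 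One could alternatively argue directly via Theorem \ref{thm:strssmtrx} by showing that every stress matrix of the chain has nullity exceeding $d+1$, but the subgraph reduction has the advantage of not re-deriving the quadric obstruction underlying Connelly's examples.
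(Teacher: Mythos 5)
Your proposal is correct and follows essentially the same route as the paper: realize the chain as a spanning subgraph of the complete bipartite graph $K_{|U|,|W|}$ on the parity classes, check the hypotheses of Theorem \ref{thm:bipartite}, and conclude by monotonicity of generic global rigidity under edge addition. The only difference is one of detail, not substance --- you spell out the bookkeeping showing $|U|,|W| \geq d+2$ (via Proposition \ref{dponeconnected} and $k \geq 4$) and the subgraph reduction, both of which the paper's proof asserts without elaboration.
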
 
\begin{proof}
This $k$-chain is the subgraph of some complete bipartite graph.  Both independent sets of this complete bipartite graph have more than $d + 2$ vertices.  Since the complete bipartite graph has $\binom{d+2}{2}$ vertices, by Theorem \ref{thm:bipartite} it is GPR, and thus it is not GGR.  So, the $k$-chain is the subgraph of a graph which is not GGR, and so is not GGR itself.
\end{proof}
It remains to be determined when these graphs are GLR and when they are GRR.  
\end{section}

%-----------------------------------------------------------------------------------------------------------------------------------------------------------------------------------------------

\begin{section}{Proof of Generic Local Rigidity}
In this section we show that $k$-chains that are $(d+1)$-connected are GLR.  We assume $(d+1)$-connectedness and $k \geq 4$ throughout this section. 

First note that $C_{a_1, a_2, \ldots, a_k}$ is a subgraph of the complete bipartite graph $K_{a_1+a_3+\cdots,a_2+a_4+\cdots}$, which has $\binom{d+2}{2}$ vertices with at least $d+2$ in each independent set. Due to Bolker and Roth \cite{br}, we can calculate the dimension of the space of stresses for a generic framework of this complete bipartite graph.

Let $A$ and $B$ be the independent sets of some complete bipartite graph.  Let $\Omega(A,B)$ be the space of stresses of a generic framework of the graph.  Additionally,  for some set of vectors $X = \{x_1, x_2, \ldots, x_k\}$, let $D(X)$ be the space of affine linear dependencies of $X$.  Finally, for a vector $v = (v_1, \ldots, v_n)$, let $\overline{v}$ be $(v_1, \ldots, v_n, 1)$.  Then let $D^2(X)$ be the set of linear dependencies of $\{\overline{x_1} \otimes \overline{x_1}, \overline{x_2} \otimes \overline{x_2}, \ldots, \overline{x_k} \otimes \overline{x_k}\}$, where $\otimes$ denotes the tensor product of two vectors.
\begin{theorem}[Bolker and Roth \cite{br}]
Given some complete bipartite graph $K_{A,B}$ such that $|A|, |B| \geq d+1$, let $C = A \cup B$.  Then for any generic realization, $\dim \Omega (A,B) = \dim D(A) \cdot \dim D(B) + \dim D^2(C)$.
\end{theorem}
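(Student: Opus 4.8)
The plan is to identify the stress space with the left kernel of the rigidity matrix (as in Proposition~\ref{prop:rmstress}) and then split it by a natural linear map whose image is $D^2(C)$ and whose kernel is $D(A)\otimes D(B)$. Write a stress as an $|A|\times|B|$ matrix $W=(\omega_{ab})$, and let $\overline{P_A}$, $\overline{P_B}$ be the matrices whose rows are the homogenized coordinates $\overline{a}$ and $\overline{b}$. The equilibrium conditions at the vertices of $A$ and of $B$ can then be rewritten as the two matrix identities $W\overline{P_B}=\mathrm{diag}(r)\,\overline{P_A}$ and $W^{T}\overline{P_A}=\mathrm{diag}(c)\,\overline{P_B}$, where $r=W\mathbf{1}$ and $c=W^{T}\mathbf{1}$ are the row- and column-sum vectors (the final homogeneous coordinate of each identity simply records $r_a=\sum_b\omega_{ab}$ and $c_b=\sum_a\omega_{ab}$). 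Multiplying the first identity on the left by $\overline{P_A}^{T}$ and the second on the right by $\overline{P_B}$ and comparing yields, for every stress,
\[
\sum_{a\in A}r_a\,\overline{a}\otimes\overline{a}=\overline{P_A}^{T}W\overline{P_B}=\sum_{b\in B}c_b\,\overline{b}\otimes\overline{b}.
\]
Thus $(r,-c)$ is a linear dependency of $\{\overline{x}\otimes\overline{x}:x\in C\}$, so $\Phi\colon\Omega(A,B)\to D^2(C)$, $W\mapsto(r,-c)$, is a well-defined linear map. I would then prove the theorem by computing $\dim\ker\Phi$ and showing $\Phi$ is surjective, giving $\dim\Omega(A,B)=\dim\ker\Phi+\dim D^2(C)$.

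For the kernel, a stress $W$ lies in $\ker\Phi$ exactly when $r=0$ and $c=0$. In that case the $A$-equilibrium identity reduces to $W\overline{P_B}=0$, i.e.\ every row of $W$ is an affine linear dependency of $B$ and so lies in $D(B)$; symmetrically every column lies in $D(A)$. The space of $|A|\times|B|$ matrices whose row space lies in $D(B)$ and whose column space lies in $D(A)$ is precisely the tensor product $D(A)\otimes D(B)$, of dimension $\dim D(A)\cdot\dim D(B)$. As a concrete check that these are genuine stresses, for $\alpha\in D(A)$ and $\beta\in D(B)$ the rank-one matrix $\alpha\beta^{T}$ satisfies both equilibrium conditions and has vanishing row and column sums, and such products span $D(A)\otimes D(B)$. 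This gives $\dim\ker\Phi=\dim D(A)\cdot\dim D(B)$, which is the first term of the formula.

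The main obstacle is the surjectivity of $\Phi$, which is equivalent to the rank computation at the heart of Bolker--Roth and cannot be extracted by dimension counting alone. Given $\nu=(\nu^A,\nu^B)\in D^2(C)$, the goal is to solve the linear system $W\overline{P_B}=\mathrm{diag}(\nu^A)\,\overline{P_A}$ and $W^{T}\overline{P_A}=-\mathrm{diag}(\nu^B)\,\overline{P_B}$ for some $W$; any solution then satisfies $\Phi(W)=\nu$. Projecting these two equations (multiply the first by $\overline{P_A}^{T}$, the second by $\overline{P_B}^{T}$) shows the only compatibility obstruction is $\sum_a\nu^A_a\,\overline{a}\otimes\overline{a}+\sum_b\nu^B_b\,\overline{b}\otimes\overline{b}=0$, which is exactly the condition $\nu\in D^2(C)$. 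The remaining work, and where I expect the real difficulty, is to \emph{lift} this projected compatibility to a genuine solution $W$: here I would use that for a generic realization with $|A|,|B|\ge d+1$ the matrices $\overline{P_A}$ and $\overline{P_B}$ have full column rank $d+1$, so $\mathbb{R}^{|B|}$ and $\mathbb{R}^{|A|}$ split as direct sums of the column spans of $\overline{P_B}$, $\overline{P_A}$ and complements in general position. The first equation then determines $W$ on the column span of $\overline{P_B}$, the second determines the component of $W$ into the column span of $\overline{P_A}$, the two prescriptions agree on the overlap precisely by the $D^2(C)$ relation, and the remaining components of $W$ may be chosen freely (for instance, zero). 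Verifying that this construction is well defined and genuinely solves both equations completes the surjectivity, and combining with the kernel computation gives $\dim\Omega(A,B)=\dim D(A)\cdot\dim D(B)+\dim D^2(C)$.
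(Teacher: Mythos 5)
Your proof is correct, but note that the paper itself offers no proof of this theorem: it is imported by citation as a special case of Bolker and Roth's results \cite{br}, with only the remark that their general formula simplifies for generic frameworks. What you have written is, in essence, a clean reconstruction of the Bolker--Roth argument itself, specialized to the spanning case: your map $\Phi\colon W\mapsto (r,-c)$ realizes the exact sequence $0\to D(A)\otimes D(B)\to\Omega(A,B)\to D^2(C)\to 0$ that underlies their theorem, and your three steps (the identity $\overline{P_A}^{T}W\overline{P_B}=\sum_a r_a\,\overline{a}\otimes\overline{a}=\sum_b c_b\,\overline{b}\otimes\overline{b}$, the identification $\ker\Phi=D(A)\otimes D(B)$ via the standard fact about matrices with prescribed row and column spaces, and the lifting argument for surjectivity) are all sound. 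The surjectivity step, which you rightly flag as the crux, does go through as you sketch it: for $x=\overline{P_B}y$ the first equation determines $Wx$, the second then holds on $\operatorname{col}(\overline{P_B})$ exactly because $\nu\in D^2(C)$, and on a complement one lifts $-\overline{P_B}^{T}\operatorname{diag}(\nu^B)x$ through the surjection $\overline{P_A}^{T}\colon\mathbb{R}^{|A|}\to\mathbb{R}^{d+1}$. It is worth observing where genericity actually enters your argument: only through the full column rank of $\overline{P_A}$ and $\overline{P_B}$, i.e., through each of $A$ and $B$ affinely spanning $\mathbb{R}^d$. So your proof in fact establishes the formula for \emph{all} realizations in which both parts affinely span, not merely generic ones --- which is precisely the form of Bolker and Roth's result, and slightly more than the paper's statement requires. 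The only cosmetic caution is that a solution $W$ of your two matrix identities must be checked to be an equilibrium stress; this is automatic, as you note, because the last homogeneous coordinate of each identity forces the row and column sums to be $\nu^A$ and $-\nu^B$, and the first $d$ coordinates are the equilibrium conditions.
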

This is actually a specific instance of Bolker and Roth's results.  Bolker and Roth provided a more general but more complicated formula for all frameworks, but we are only interested in generic frameworks.
\begin{remark}\label{remark:br}
For a generic set of points $X$,
\begin{align*} 
\dim D(X) &=
\begin{cases}
0 & \text{if } |X| \leq d+1\\ 
|X| - d - 1 & \text{if } |X| > d+1\\ 
\end{cases}  
\\
\dim D^2(X) &=
\begin{cases}
0 & \text{if } |X| \leq \binom{d+2}{2}\\ 
|X| - \binom{d+2}{2} & \text{if } |X| >\binom{d+2}{2}\\ 
\end{cases}
\end{align*}
\end{remark}
\begin{corollary}\label{cor:bipstress}
Suppose $v \leq \binom{d+2}{2}$. For any generic realization of $K_{A,B}$ with $|A|,|B| \geq d+1$, $\dim \Omega (A,B) = (|A| - d - 1) \cdot (|B| - d - 1)$.  If $|A| < d+1$ or $|B| < d+1$, then $\dim \Omega (A,B) = 0$.
\end{corollary}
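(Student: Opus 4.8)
The plan is to treat the two cases of the statement separately, since the first follows directly from the Bolker--Roth formula together with Remark \ref{remark:br}, while the second falls outside the range $|A|,|B| \ge d+1$ required by the Bolker--Roth theorem and needs a short independent argument. For the first case, where $|A|,|B| \ge d+1$ and $v \le \binom{d+2}{2}$, I would simply substitute the values from the remark into the formula $\dim \Omega(A,B) = \dim D(A)\cdot \dim D(B) + \dim D^2(C)$, where $C = A \cup B$. Since $|C| = v \le \binom{d+2}{2}$, the remark gives $\dim D^2(C) = 0$. Since $|A|,|B| \ge d+1$, the remark gives $\dim D(A) = |A| - d - 1$ and $\dim D(B) = |B| - d - 1$, where I note that the two cases of the remark agree at the boundary $|X| = d+1$, both yielding $0$. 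Combining these immediately produces $\dim \Omega(A,B) = (|A|-d-1)(|B|-d-1)$.

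For the second case, suppose without loss of generality that $|A| < d+1$, that is, $|A| \le d$. Here I would argue straight from the definition of an equilibrium stress rather than invoking Bolker--Roth. Fix any vertex $b \in B$; the equilibrium condition at $b$ reads $\sum_{a \in A} \omega_{ab}(p_a - p_b) = 0$. Because $|A| \le d$, the set $\{p_b\} \cup \{p_a : a \in A\}$ consists of at most $d+1$ generic points and is therefore affinely independent, so the difference vectors $\{p_a - p_b : a \in A\}$ are linearly independent. Consequently every coefficient $\omega_{ab}$ must vanish. Ranging over all $b \in B$ accounts for every edge of $K_{A,B}$, so the only equilibrium stress is the zero stress and $\dim \Omega(A,B) = 0$.

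The only genuine subtlety is this boundary case $|A| \le d$, which the Bolker--Roth theorem does not cover; the decisive observation is that genericity forces the edge vectors incident to each vertex on the smaller side to be linearly independent, which kills all stresses at once. Once the remark is available, the first case is nothing more than substitution, so I expect essentially all of the work to be in recognizing and dispatching the small-side case.
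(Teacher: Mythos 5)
Your proposal is correct. Note that the paper states Corollary \ref{cor:bipstress} without any proof at all, treating it as immediate: for $|A|,|B| \geq d+1$ the intended argument is exactly your substitution of Remark \ref{remark:br} into the Bolker--Roth formula (with $\dim D^2(C) = 0$ because $|C| = v \leq \binom{d+2}{2}$, and the observation that both branches of the remark give $0$ at $|X| = d+1$), and your computation matches this. Where you go beyond the paper is the second clause: as you correctly point out, the Bolker--Roth theorem as quoted requires $|A|,|B| \geq d+1$, so the claim $\dim \Omega(A,B) = 0$ when, say, $|A| \leq d$ is strictly outside its hypotheses, and the paper simply asserts it. Your direct argument fills this in cleanly: at each $b \in B$ the equilibrium condition involves only the at most $d$ difference vectors $p_a - p_b$ with $a \in A$, and genericity makes the at most $d+1$ points $\{p_b\} \cup \{p_a : a \in A\}$ affinely independent, hence those difference vectors linearly independent, killing every $\omega_{ab}$; ranging over $b$ exhausts all edges of $K_{A,B}$. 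A pleasant byproduct of your argument is that the degenerate case holds with no restriction on $v$ at all, whereas the corollary's blanket hypothesis $v \leq \binom{d+2}{2}$ is genuinely needed only for the first clause (it is what forces $\dim D^2(C) = 0$). An equivalent route would have been to observe via Proposition \ref{prop:rmstress} that the rows of $df_{K_{A,B}}(p)$ are generically independent when $|A| \leq d$, but your vertex-by-vertex argument is just as short and more self-contained.
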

From the corollary, it is possible to compute the dimension of the space of stresses for the bipartite graph $K_{a_1+a_3+\cdots,a_2+a_4+\cdots}$. If we let $k = a_1+a_3+\cdots$ and $l = a_2+a_4+\cdots$, then the dimension is $(k - d - 1)(l - d - 1)$, and thus the rank of the rigidity matrix is $$kl - (k - d - 1)(l - d - 1) = (k+l)(d+1) - (d+1)^2 = (k+l)d - \binom{d+1}{2}$$ since $k+l = \binom{d+2}{2}$. Thus this complete bipartite graph is GLR, as also indicated by Theorem \ref{thm:bipartite}.

When we remove some edges from a GLR graph, it is possible to determine whether the new graph is GLR by examining the space of stresses.
\begin{proposition}\label{prop:redrigid}
Let G(p) be a generic, locally rigid framework, and let $e_1, \ldots, e_n$ be some edges of G.  Then $G\setminus \{e_1, \ldots, e_n\}$ is generically locally rigid if and only if, for any $a_1, \ldots, a_n \in \mathbb{R}$, there exists a stress on G(p) with values $a_1, \ldots, a_n$ on $e_1, \ldots, e_n$.
\end{proposition}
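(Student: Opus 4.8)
The plan is to reduce everything to a rank computation on the rigidity matrix and then extract the stress condition via rank--nullity applied to a coordinate projection. Write $R = df_G(p)$ and partition its rows as $R = \left(\begin{smallmatrix} R_1 \\ R_2 \end{smallmatrix}\right)$, where $R_1$ is the $n$-row block corresponding to $e_1, \ldots, e_n$ and $R_2$ is the block for the remaining edges. Note that $R_2$ is exactly the rigidity matrix $df_{G'}(p)$ of $G' = G \setminus \{e_1, \ldots, e_n\}$. Since $G(p)$ is generic and locally rigid, Theorem \ref{thm:rigidity} gives $\rank R = vd - \binom{d+1}{2}$; deleting rows can only decrease rank, and the Euclidean motions force $\rank R_2 \le vd - \binom{d+1}{2}$ in any case. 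Hence $G'$ is generically locally rigid if and only if $\rank R_2 = \rank R$, and the whole proposition becomes a statement comparing these two ranks.

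Next I would translate the stress condition into linear algebra. By Proposition \ref{prop:rmstress} the stresses of $G(p)$ are precisely $\ker(R^T)$, and writing a stress as $\omega = (\omega_1, \omega_2)$ in the same block partition, its values on $e_1, \ldots, e_n$ are the coordinates of $\omega_1$. Consider the projection $\pi \colon \ker(R^T) \to \mathbb{R}^n$ given by $\omega \mapsto \omega_1$. The hypothesis that \emph{for any} $a_1, \ldots, a_n \in \mathbb{R}$ there is a stress taking those values on $e_1, \ldots, e_n$ is exactly the assertion that $\pi$ is surjective, so it suffices to prove that $\pi$ is surjective if and only if $\rank R_2 = \rank R$.

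The key step is identifying $\ker \pi$. A stress $\omega$ lies in $\ker \pi$ precisely when $\omega_1 = 0$, and since $R^T \omega = R_1^T \omega_1 + R_2^T \omega_2$, setting $\omega_1 = 0$ leaves $R_2^T \omega_2 = 0$; thus $\ker \pi$ is canonically the stress space of $G'$, namely $\ker(R_2^T)$. Counting dimensions with $R$ having $e$ rows, I would compute $\dim \ker(R^T) = e - \rank R$ and $\dim \ker \pi = (e - n) - \rank R_2$, so rank--nullity for $\pi$ yields $\dim (\operatorname{im} \pi) = n - \rank R + \rank R_2$. Therefore $\pi$ is surjective, i.e. $\dim(\operatorname{im}\pi) = n$, if and only if $\rank R_2 = \rank R$, which by the first paragraph is equivalent to the generic local rigidity of $G'$.

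The only subtlety worth checking carefully, rather than waving through, is the canonical identification $\ker \pi \cong \ker(R_2^T)$: that a stress of $G$ vanishing on the deleted edges restricts to a genuine equilibrium stress of $G'$, and conversely that every stress of $G'$ extends by zero to a stress of $G$. Everything else is bookkeeping — the two dimension counts and the observation that $\rank R$ is already maximal because $G$ is locally rigid. I do not anticipate a real obstacle here, since the content is purely linear-algebraic once Theorem \ref{thm:rigidity} and Proposition \ref{prop:rmstress} are available.
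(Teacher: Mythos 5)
Your proposal is correct, and it takes a genuinely different route from the paper's proof. The paper argues by induction on $n$: a base case showing that a single edge $e_1$ can carry an arbitrary stress value if and only if $G\setminus\{e_1\}$ is GLR (adding $e_1$ back does not raise the rank of the rigidity matrix, so it must add a dimension of stresses, and conversely), followed by an inductive step that builds the prescribed values $a_1,\ldots,a_{n+1}$ by explicitly composing a stress with values $a_1,\ldots,a_n$ with a zero-extended stress of $H=G\setminus\{e_1,\ldots,e_n\}$ carrying the needed value on $e_{n+1}$. You replace this entire induction with one application of rank--nullity to the projection $\pi\colon\ker(R^T)\to\mathbb{R}^n$, after verifying the identification $\ker\pi\cong\ker(R_2^T)$ --- the zero-extension/restriction correspondence you rightly flag as the only real content, and which is exactly what the paper invokes informally when it ``artificially extends'' a stress of $H(p)$ by zeros. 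Both arguments rest on the same inputs, Theorem \ref{thm:rigidity} to convert generic local rigidity into the rank condition and Proposition \ref{prop:rmstress} to identify stresses with $\ker(R^T)$, but yours is shorter and yields a refinement the induction does not expose: the formula $\dim\operatorname{im}\pi = n - (\rank R - \rank R_2)$ measures exactly how far the prescribed-values condition fails when $G\setminus\{e_1,\ldots,e_n\}$ is not GLR. What the paper's approach buys instead is constructiveness --- it exhibits the desired stress as an explicit sum of scaled stresses, a mechanism reused later in Section 3 when stresses of the subgraphs $\Upsilon'$ are composed. One point to make explicit in a final write-up: since $p$ is generic and $G'$ has the same vertex set, $p$ is also a generic realization of $G'$, so Theorem \ref{thm:rigidity} legitimately applies to $R_2 = df_{G'}(p)$; you use this tacitly when converting $\rank R_2 = \rank R$ into generic local rigidity of $G'$.
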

\begin{proof}
We use induction on $n$.  

[Base Case $\Rightarrow$] For $n = 1$, first suppose $G\setminus\{e_1\}$ is GLR.  Since $G(p)$ is locally rigid, $\rank df_G(p) = \rank df_{G\setminus\{e_1\}}(p)$.  Adding $e_1$ to $G\setminus\{e_1\}$ does not increase the rank of the rigidity matrix, so it increases the dimension of $\ker(df_G(p)^T)$ by $1$.  This means adding $e_1$ adds a new dimension of stresses, which is only possible if there is some stress with a non-zero value on $e_1$.  By scaling this stress, we can achieve any prescribed value on $e_1$.

[Base Case $\Leftarrow$] Assume there is some stress with a non-zero value on $e_1$.  Removing one edge can decrease the dimension of $\ker (df_G(p)^T)$ by at most $1$.  Moreover, there is a stress with a non-zero value on $e_1$, and since this stress cannot exist without $e_1$, removing $e_1$ must decrease $\dim\ker(df_G(p)^T)$ by exactly $1$.  But, the number of rows of $df_G(p)$ also decreases by $1$, so $\rank df_{G}(p)$ stays the same.  Therefore, $\rank df_G(p) = \rank df_{G\setminus\{e_1\}}(p)$, and so $G\setminus\{e_1\}$ is GLR.

[Inductive Step $\Rightarrow$] Assume that for some $n$, when $G\setminus \{e_1, \ldots, e_n\}$ is GLR, there is a stress on $G(p)$ with any $a_1, \ldots, a_n$ on $e_1, \ldots, e_n$.  Then assume that $G\setminus \{e_1, \ldots, e_{n+1}\}$ is GLR.  Let $H = G\setminus \{e_1, \ldots, e_n\}$.  First, note that $H$ is also GLR, so we can create a stress on a generic framework $G(p)$ with values $a_1, \ldots, a_n$ on $e_1, \ldots, e_n$.  Call the stress we create $\omega$.  Because $H\setminus \{e_{n+1}\}$ is GLR, we can create some stress of $H(p)$ with any value we like on $e_{n+1}$ by Base Case $\Rightarrow$, and we can artificially extend it to a stress of $G(p)$ with values of $0$ on $e_1, \ldots, e_n$.  We give this stress the value on $e_{n+1}$ such that, when we compose it with $\omega$, we create a stress with values $a_1, \ldots, a_{n+1}$ on $e_1, \ldots, e_{n+1}$.

[Inductive Step $\Leftarrow$] Assume that for some $n$, if we can find a stress with any value on $e_1, \ldots, e_n$, $G\setminus \{e_1, \ldots, e_n\}$ is GLR.  Then, suppose we can find some stress with any value we want on $G\setminus \{e_1, \ldots, e_{n+1}\}$.  By the inductive hypothesis, $H$ is GLR.  If we set $a_1, \ldots, a_{n}$ to all be zero, then we can find a stress on $H$ with any value we wish on $e_{n+1}$.  So, by Base Case $\Leftarrow$, $G\setminus \{e_1, \ldots, e_{n+1}\}$ is GLR.
 \end{proof}
 
 \begin{corollary}\label{cor:redrigid}
 A graph $G$ is generically redundantly rigid in $\mathbb{R}^d$ if and only if it is generically locally rigid in $\mathbb{R}^d$ and there is a non-zero stress on every edge of $G$.
 \end{corollary}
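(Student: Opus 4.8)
The plan is to read this off directly from Proposition \ref{prop:redrigid} by specializing to the removal of a single edge ($n=1$) and applying the resulting equivalence to each edge of $G$ in turn. Before doing so, I would record one bookkeeping observation: generic redundant rigidity already entails generic local rigidity, so the ``GLR'' clause in the statement is not an extra hypothesis on the GRR side. Indeed, if removing some edge $e$ leaves a locally rigid framework, then $\rank df_{G\setminus\{e\}}(p) = vd - \binom{d+1}{2}$, which is the maximum possible rank; reinserting $e$ cannot decrease the rank and cannot exceed this maximum, so $\rank df_G(p)$ equals the same value and $G(p)$ is itself locally rigid.

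For the ``only if'' direction, I would suppose $G$ is GRR, witnessed by a generic framework $G(p)$. By definition every edge $e$ is redundant, so $G\setminus\{e\}$ is GLR, and by the observation above $G$ is GLR as well. Proposition \ref{prop:redrigid} with $n=1$ then applies to each $e$: since $G\setminus\{e\}$ is GLR, for every $a \in \mathbb{R}$ there is a stress on $G(p)$ taking value $a$ on $e$. Choosing $a \neq 0$ produces a stress with a non-zero value on $e$, and since this holds for every edge, there is a non-zero stress on every edge.

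For the converse, I would suppose $G$ is GLR and that for each edge $e$ there is a stress on a generic framework $G(p)$ that is non-zero on $e$. Because the stress space is a linear subspace (Proposition \ref{prop:rmstress}), scaling such a stress realizes any prescribed value on $e$, so the hypothesis of Proposition \ref{prop:redrigid} with $n=1$ is satisfied. Hence $G\setminus\{e\}$ is GLR, i.e.\ $e$ is redundant; applying this to each edge in turn shows every edge is redundant, so $G$ is GRR.

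I do not expect a serious obstacle here, as the corollary is essentially the $n=1$ case of Proposition \ref{prop:redrigid} applied edge by edge. The only point requiring care is the equivalence between ``a stress with a non-zero value on $e$'' and ``a stress attaining an arbitrary prescribed value on $e$,'' which is exactly what the $n=1$ statement encodes and follows at once from the linearity of the stress space together with scaling. The remaining content is the rank computation showing that redundant rigidity forces local rigidity, which is the routine bookkeeping sketched above.
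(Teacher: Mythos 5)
Your proposal is correct and follows essentially the same route as the paper: both reduce the corollary to the $n=1$ case of Proposition~\ref{prop:redrigid} applied edge by edge, with scaling translating between ``non-zero stress on $e$'' and ``arbitrary prescribed value on $e$.'' Your added rank observation that redundant rigidity already forces local rigidity is a small bookkeeping point the paper leaves implicit, and it is argued correctly.
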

 \begin{proof}
 If there is a non-zero stress on every edge of $G$, then by scaling, we can find a stress of any value we want on any edge of $G$.  Thus, by Proposition \ref{prop:redrigid}, each edge is redundant and $G$ is GRR.  
 
 On the other hand, if the only stress on some edge is the zero stress, we cannot find a stress of any value on that edge.  Thus, by Proposition \ref{prop:redrigid}, $G$ is not GRR.
 \end{proof}
 
Now we need to show that a $(d+1)$-connected $k$-chain is GLR. Recall that the $k$-chain is a subgraph of a complete bipartite graph, and by Theorem \ref{thm:bipartite}, that complete bipartite graph is GLR. Therefore, it is sufficient to demonstrate that the edges removed from the complete bipartite graph can take stresses of any value. Pick any two vertices which are not connected in the $k$-chain, but are connected in the complete bipartite graph.  We will show that there exists some stress with a non-zero value on the edge between these two vertices and values of zero on all other removed edges.

Suppose the two vertices come from the sets $A_i$ and $A_j$, assuming without a loss of generality that $i < j$. Note that $i - j$ is odd, since the removed edges must come from different independent sets of the complete bipartite graph. Furthermore, it is also evident that $i - j \geq 3$. Pick $d+1$ vertices from each of $A_{i+1}, \ldots, A_{j-1}$. Use these vertices and the two vertices in $A_i$ and $A_j$ to form $C_{1, d+1, \ldots, d+1, 1}$, denoted by $\Upsilon$. We will show that for generic realizations, $\Upsilon$ has a zero-dimensional space of stresses and that the graph obtained by connecting the two vertices at the ends, denoted by $\Upsilon '$, has a $1$-dimensional space of stresses.

Reorder the independent sets of $\Upsilon$ by $A_i, A_{i+2}, \ldots, A_{j-1},  A_{i+1}, A_{i+3}, \ldots, A_{j}$.  Because $\Upsilon$ is a bipartite graph, there are no edges between any two vertices in $A_i \cup A_{i+2} \cup \ldots \cup A_{j-1}$; the same can be said of the vertices in $A_{i+1} \cup A_{i+3} \cup \ldots \cup A_j$.  Therefore,  the upper-left and bottom-right corners of the stress matrix of $\Upsilon$ have values of zero on the non-diagonal entries. Moreover, Bolker and Roth \cite{br} demonstrated that the stress matrix has values of zero on the diagonal entries.  Furthermore, the stress matrix is symmetric across the diagonal, and because each row fulfills an affine linear relation with the projection vectors, so do the columns.  Therefore, it is sufficient to examine the upper-right corner of the matrix, keeping in mind the affine linear relations on both the rows and the columns.  We will use the following remark to analyze the stress matrix.

\begin{remark}\label{affine}
If $d+1$ generic vectors $v_1, v_2, \ldots, v_{d+1} \in \mathbb{R}^d$ satisfy an affine linear relation, that is, for $a_1, a_2, \ldots, a_{d+1} \in \mathbb{R}$
\begin{equation*}
a_1v_1+a_2v_2+\ldots+a_{d+1}v_{d+1} = 0
\end{equation*}
\begin{equation*}
a_1+a_2+\ldots+a_{d+1} = 0
\end{equation*}
Then $a_1 = a_2 = \cdots = a_{d+1} = 0$. This can easily be seen by solving the second equation for $a_{d+1}$ and substituting into the first equation. Then we get a linear relation on $d$ generic vectors in $\mathbb{R}^d$, which forces each of the coefficients to be $0$.
\end{remark}

The upper-right corner of the stress matrix has the following shape.
\begin{equation*}
\begin {tabular} {r|cccccc|}
& $A_{i+1}$ & $A_{i+3}$ & $A_{i+5}$ & $\cdots$ & $A_{j-2}$ & $A_{j}$ \\
\hline
$A_i$ & $*_{1}$ & 0 & $\cdots$ & $\cdots$ & $\cdots$ & 0\\
$A_{i+2}$ & $*_{2}$ & * & 0 & $\cdots$ & $\cdots$ & 0\\
$A_{i+4}$ & 0 & * & * & 0 & $\cdots$ & 0\\
$\vdots$ & $\vdots$ & $\ddots$ & $\ddots$ & $\ddots$ & $\ddots$ & $\vdots$\\
$A_{j-3}$ & 0 & $\cdots$  &  0 & * & * & 0\\
$A_{j-1}$ & 0 & $\cdots$  & $\cdots$ & 0 & * & *\\
\hline
\end {tabular}
\end{equation*}

The asterisks represent all possible sets of non-zero entries, corresponding to the edges of $\Upsilon$. All of the edges are between vertices in $A_n$ and $A_{n+1}$ for some $n$, causing the asterisks to form a ``staircase" pattern. Consider $*_{1}$, a $1$ by $d+1$ block of entries. These $d+1$ entries fulfill an affine linear relation among generic vectors across the first row. By Remark \ref{affine}, every entry in $*_1$ is therefore $0$. Next, consider $*_2$, a $d+1$ by $d+1$ block of entries.  Looking at the first $d+1$ columns of the upper right corner of the stress matrix, $*_{2}$ must be uniformly $0$ as well because the projection vectors fulfill an affine linear relation on each column. Working down the ``staircase'' by alternately solving for rows and columns, each of the asterisks must be uniformly $0$. Hence, the only stress is the zero stress. 

Now consider $\Upsilon '$. Since $i - j$ is odd, let $i - j + 1 = 2l$.  The graph $\Upsilon '$ contains $(2l-2)(d+1) + 2 = 2(l-1)(d+1) + 2$ vertices and $(2l - 3)(d+1)^2 + 2(d+1) + 1$ edges. $\Upsilon '$ is a subgraph of some complete bipartite graph with the same vertices. Each of the independent sets of this complete bipartite graph has $(l - 1)(d + 1) + 1$ vertices. The complete bipartite graph has $[(l-1)(d+1) + 1]^2$ edges, and so by Corollary \ref{cor:bipstress}, it has a $[(l-1)(d+1) + 1 - d - 1]^2 = [(l-2)^2(d+1)^2 + 2(l-2)(d+1) + 1]$-dimensional space of stresses.  $\Upsilon '$ results from the removal of $[(l-1)(d+1) + 1]^2 - [(2l - 3)(d+1)^2 + 2(d+1) + 1] = (l-2)^2(d+1)^2 + 2(l-2)(d+1)$ edges from the complete bipartite graph. Each edge removed reduces the rank of the rigidity matrix by at most $1$, and so reduces the dimension of the space of stresses by at most $1$. Therefore, after removing $(l-2)^2(d+1)^2 + 2(l-2)(d+1)$ edges, the dimension of the space of stresses is at least $1$.

Finally, since $\Upsilon$ has a zero-dimensional space of stresses and $\Upsilon '$ has a positive-dimensional space of stresses, there must be a non-zero stress on the edge connecting the two vertices. In fact, $\Upsilon'$ has exactly a $1$-dimensional space of stresses, since removing one edge forces the space of stresses to be zero-dimensional.  This implies that each of the removed edges can be written as a linear combination of the remaining edges in the rigidity matrix, meaning that each of these edges is responsible for a single independent dimension of stresses.

By composing the stresses of the subgraphs found above, we can obtain any value we want on the removed edges of the complete bipartite graph. This leads to the following result:
\begin{lemma}\label{lemma:section3} Any $(d+1)$-connected $k$-chain $C_{a_1, a_2, \cdots, a_k}$ with $k \geq 4$ and $ \binom{d+2}{2}$ vertices is generically locally rigid in $\mathbb{R}^d$.
\end{lemma}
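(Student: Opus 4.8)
The plan is to realize the chain as an edge-deletion of a complete bipartite graph that is already known to be generically locally rigid, and then invoke Proposition~\ref{prop:redrigid} to show that the deletion preserves generic local rigidity. Concretely, I would two-color the chain, placing the odd-indexed blocks $A_1, A_3, \ldots$ into a part $A$ and the even-indexed blocks $A_2, A_4, \ldots$ into a part $B$; since every edge of the chain joins consecutive, hence oppositely colored, blocks, the chain is a spanning subgraph of $K_{A,B}$. The $(d+1)$-connectedness hypothesis, via Proposition~\ref{dponeconnected}, forces each interior block to have at least $d+1$ vertices, and together with $k \ge 4$ and $a_1, a_k \ge 1$ this gives $|A|, |B| \ge d+2$; the dimension count following Corollary~\ref{cor:bipstress} then shows this $K_{A,B}$ is generically locally rigid. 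By Proposition~\ref{prop:redrigid}, it remains only to show that on a generic framework of $K_{A,B}$ one can realize an arbitrary prescribed assignment of stress values on the deleted edges.

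By linearity, it suffices to produce, for each single deleted edge $\epsilon$ joining a vertex of $A_i$ to a vertex of $A_j$ with $i < j$, a stress that is nonzero on $\epsilon$ and zero on every other deleted edge; scaling and summing such stresses then yields any desired assignment. Because $\epsilon \in K_{A,B}$ but $\epsilon$ is not a chain edge, its endpoints lie on opposite sides of the bipartition yet in non-consecutive blocks, so $j-i$ is odd and at least $3$. I would then localize the search: select $d+1$ vertices from each intermediate block $A_{i+1}, \ldots, A_{j-1}$ and, together with the two chosen endpoints, form the small chain $\Upsilon = C_{1, d+1, \ldots, d+1, 1}$. A stress supported on $\Upsilon' := \Upsilon \cup \{\epsilon\}$, extended by zero to all of $K_{A,B}$, remains an equilibrium stress of $K_{A,B}$ and automatically vanishes on every deleted edge except $\epsilon$, since $\epsilon$ is the only deleted edge lying in $\Upsilon'$. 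Thus the whole problem reduces to showing that adjoining $\epsilon$ to $\Upsilon$ creates a genuinely new stress.

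This in turn splits into two claims: that $\Upsilon$ has a zero-dimensional stress space while $\Upsilon'$ has a positive-dimensional one. For the first, I would reorder the bipartition of $\Upsilon$ so that its stress matrix becomes block structured, with the only possibly nonzero entries forming a ``staircase'' in the upper-right corner; each step of the staircase is a $1 \times (d+1)$ or $(d+1) \times (d+1)$ block whose entries satisfy the affine linear relation of Remark~\ref{affine} along a row or column of generic points, forcing that block to vanish, and solving alternately along rows and columns drives every entry to zero. For the second, I would view $\Upsilon'$ as an edge-deletion of the complete bipartite graph on its vertex set, compute that graph's stress dimension from Corollary~\ref{cor:bipstress}, and observe that the number of deleted edges is exactly one less than that dimension; since each deletion drops the stress dimension by at most one, at least one dimension of stress survives. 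The main obstacle, and the step deserving the most care, is the staircase argument: one must arrange the reordered stress matrix precisely, track which blocks have already been zeroed, and verify that at each step the relevant generic points still satisfy the hypotheses of Remark~\ref{affine}, so that the induction down the staircase never stalls.
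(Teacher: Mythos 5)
Your proposal is correct and follows essentially the same route as the paper's own proof: deleting edges from the complete bipartite graph $K_{a_1+a_3+\cdots,\,a_2+a_4+\cdots}$, reducing via Proposition~\ref{prop:redrigid} to prescribing stresses on the deleted edges, and handling each deleted edge through the localized subchain $\Upsilon = C_{1,d+1,\ldots,d+1,1}$, with the staircase argument from Remark~\ref{affine} showing $\Upsilon$ has only the zero stress and the edge count against Corollary~\ref{cor:bipstress} showing $\Upsilon'$ has a positive-dimensional stress space. Even your supporting details (the parity and $j-i\ge 3$ observation, and the fact that the number of edges deleted to form $\Upsilon'$ is exactly one less than the ambient bipartite stress dimension) match the paper's computations.
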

\begin{proof}
$C_{a_1, a_2, \cdots, a_k}$ is a subgraph of a complete bipartite graph with the same vertices, which has already been proved to be GLR.  Moreover, as shown above, we can put arbitrary stresses on all of the edges that must be removed to create $C_{a_1, a_2, \cdots, a_k}$. By Proposition \ref{prop:redrigid}, the $k$-chain is GLR.
\end{proof}
\end{section}

%-----------------------------------------------------------------------------------------------------------------------------

\begin{section}{Proof of Generic Redundant Rigidity}
Now that we know the $k$-chains in question are GLR if condition 1 of Theorem \ref{main} is satisfied (which we will assume throughout the section), it remains to be determined under what conditions they are GRR.  According to Corollary \ref{cor:redrigid}, a framework is redundantly rigid if and only if there is some stress with non-zero entries on every edge.  Consequently, we will find the space of stresses of the $k$-chains.

By Proposition \ref{prop:rmstress}, the space of stresses is the kernel of the transpose of the rigidity matrix.  Because the graph is GLR, for any generic realization $p$, the dimension of the space of stresses is $e - \rank df_G(p) = e - vd + \binom{d+1}{2}$, where $v = \binom{d+2}{2}$.

Now we consider all the $3$-chains $C_{a_i,a_{i+1},a_{i+2}}$ that are subgraphs of our $k$-chain.  Call this set of $3$-chains the \emph{$3$-chain cover} of the $k$-chain.  Note that any stress of one of these $3$-chains is also a stress of the entire $k$-chain.  Moreover, we present the following lemma.

\begin{lemma}\label{lemma:covering}  
Let $C_{a_1, a_2, \cdots, a_k}$ be a $(d+1)$-connected $k$-chain with $k \geq 4$ and $ \binom{d+2}{2}$ vertices.  Then the space of stresses of $C_{a_1, a_2, \cdots, a_k}$ is precisely the space of stresses of the $3$-chain cover of $C_{a_1, a_2, \cdots, a_k}$.
\end{lemma}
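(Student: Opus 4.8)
The plan is to prove the two inclusions separately: the forward inclusion is immediate, and the reverse reduces to a dimension count. Throughout, let $T_i = C_{a_i,a_{i+1},a_{i+2}}$ for $1 \le i \le k-2$ be the $3$-chains in the cover, let $S_i$ denote the extension-by-zero of the stress space of $T_i$ to the edge set of $C_{a_1,\ldots,a_k}$, and group the edges of the $k$-chain into \emph{layers}, where layer $j$ consists of the edges between $A_j$ and $A_{j+1}$.

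For the inclusion ``$3$-chain cover stresses $\subseteq$ $k$-chain stresses,'' I take a stress $\omega$ of $T_i$, extend it by zero on all other edges, and verify the equilibrium condition of Proposition \ref{prop:rmstress} vertex by vertex. At a vertex none of whose incident edges lie in $T_i$ the condition is vacuous, and at a vertex of $T_i$ the additional incident edges carry weight zero, so the condition collapses to the one already satisfied inside $T_i$. Hence each $S_i$ lies in the stress space of $C_{a_1,\ldots,a_k}$, and therefore so does the span $\sum_i S_i$, which is what is meant by the space of stresses of the $3$-chain cover. It remains to show these two spaces have the same dimension.

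Since $C_{a_1,\ldots,a_k}$ is generically locally rigid by Lemma \ref{lemma:section3}, its stress space has dimension $e - vd + \binom{d+1}{2}$ with $v = \binom{d+2}{2}$ and $e = \sum_{i=1}^{k-1} a_i a_{i+1}$. On the other side, each $T_i$ is the complete bipartite graph $K_{a_i+a_{i+2},\,a_{i+1}}$, so by Corollary \ref{cor:bipstress} we have $\dim S_i = (a_i+a_{i+2}-d-1)(a_{i+1}-d-1)$, the factors being nonnegative because condition 1 forces $a_{i+1}\geq d+1$ and at least one of $a_i,a_{i+2}$ to be $\geq d+1$. Writing $\Phi\colon \bigoplus_i S_i \to \sum_i S_i$ for the summation map, I have $\dim \sum_i S_i = \sum_i \dim S_i - \dim\ker\Phi$, so the entire argument hinges on computing $\ker\Phi$. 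This is the step I expect to be the main obstacle, since I must show that \emph{every} relation $\sum_i \omega_i = 0$ among the $3$-chain stresses is accounted for by the single-layer overlaps of consecutive $3$-chains and nothing more. The key observation is that $S_i$ is supported on layers $i$ and $i+1$, so consecutive $3$-chains overlap in exactly one layer and non-consecutive ones in none. Reading the relation layer by layer and using the equilibrium conditions of each $T_i$, I propagate from the left: the layer-$1$ equation forces the layer-$1$ part of $\omega_1$ to vanish, which through the middle-vertex conditions of $T_1$ forces its layer-$2$ part to be a stress of the single complete bipartite graph $K_{a_2,a_3}$; the layer-$2$ equation then transfers this constraint to $\omega_2$, and so on down the chain, while the final layer-$(k-1)$ equation kills the last free parameter. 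This identifies $\ker\Phi \cong \bigoplus_{j=2}^{k-2}\Omega(K_{a_j,a_{j+1}})$, whence $\dim\ker\Phi = \sum_{j=2}^{k-2}(a_j-d-1)(a_{j+1}-d-1)$ by Corollary \ref{cor:bipstress}.

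Finally I substitute these three dimension formulas and simplify. The products expand so that the cross terms telescope, yielding $\dim\sum_i S_i = e - (d+1)v + (d+1)^2$; the identity $v = \binom{d+2}{2}$ then rewrites the constant terms and converts this precisely to $e - vd + \binom{d+1}{2}$, the dimension of the full $k$-chain stress space. Equality of dimensions combined with the inclusion of the second paragraph gives the lemma. I do not expect the final algebra to be delicate, provided the computation of $\ker\Phi$ via the layerwise propagation is carried out carefully.
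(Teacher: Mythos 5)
Your proposal is correct and takes essentially the same approach as the paper: both establish the zero-extension inclusion and then match the dimension $e - vd + \binom{d+1}{2}$ of the full stress space (via Lemma \ref{lemma:section3}) against an inclusion--exclusion count in which consecutive $3$-chains overlap exactly in the stresses of their shared $2$-chain $K_{a_j,a_{j+1}}$, with the same telescoping algebra. The only difference is one of rigor, in your favor: the paper simply asserts that ``the overlap among the stresses stems from the $2$-chains shared by adjacent $3$-chains,'' whereas your layerwise propagation computing $\ker\Phi \cong \bigoplus_{j=2}^{k-2}\Omega(K_{a_j,a_{j+1}})$ actually justifies that inclusion--exclusion step, and the propagation argument is sound.
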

\begin{proof} To find the dimension of the space of stresses of the $3$-chain cover, use the inclusion-exclusion principle.  The overlap among the stresses stems from the $2$-chains shared by adjacent $3$-chains.  So, using Corollary \ref{cor:bipstress} and some simple algebra, the dimension of the space of stresses of the $3$-chain cover is:
\begin{align*}
&\sum_{i=2}^{k-1} (a_{i-1} + a_{i+1} - d - 1)(a_i - d - 1) - \sum_{i=2}^{k-2}(a_i-d-1)(a_{i+1}-d-1)\\
%&= \sum_{i=2}^{k-1}\left[(a_{i-1} + a_{i+1})a_i - (a_{i-1} + a_i + a_{i+1})(d+1) + (d+1)^2\right]\\
%& \qquad - \sum_{i=2}^{k-2}\left[a_ia_{i+1} - (a_i + a_{i+1})(d+1) + (d+1)^2\right]\\
%&= a_1a_2 + 2\sum_{i=2}^{k-2} a_ia_{i+1} + a_{k-1}a_k\\
%& \qquad - \left(a_1 + 2a_2 + 3\sum_{i=3}^{k-2}a_i + 2a_{k-1} + a_k\right)(d+1) + (k-2)(d+1)^2\\
%&\qquad - \sum_{i=2}^{k-2}a_ia_{i+1} + \left(a_2 + 2\sum_{i=3}^{k-2}a_i + a_{k-1}\right)(d+1) - (k-3)(d+1)^2\\
&= \sum_{i=1}^{k-1} a_ia_{i+1} - \left(\sum_{i=1}^{k} a_i\right)(d+1) + (d+1)^2\\
&= e - v(d+1) + (d+1)^2
 \end{align*}
 
If $v = \binom{d+2}{2}$, the reader can verify that this is also $e - vd + \binom{d+1}{2}$.  Since the stresses of the $3$-chain cover constitute a subspace of the total space of stresses with equal dimension, they account for the entire space of stresses of $C_{a_1, a_2, \cdots, a_k}$.
\end{proof}
 
Note that if a $3$-chain has a positive-dimensional space of stresses, we can find some stress with non-zero values on every entry. To see this, first note that a $3$-chain is a complete bipartite graph.  If the graph has a positive-dimensional space of stresses in $\mathbb{R}^d$, then some edge has a non-zero stress on it.  However, complete bipartite graphs are completely symmetric across their edges with respect to the existence of non-zero stresses.  If there is a stress with entries of $0$ on some edge, we can use the symmetry of the graph to find stresses with a non-zero value on that edge and then add the stresses.  Thus, it is possible to find a stress with non-zero values on every edge.
 
Now we are equipped with all the tools necessary to examine redundant rigidity. This leads to the following lemma.
\begin{lemma}\label{lemma:section4}
A $(d+1)$-connected $k$-chain $C_{a_1, a_2, \cdots, a_k}$ with $k \geq 4$ and $ \binom{d+2}{2}$ vertices is generically redundantly rigid if and only if 
\begin{enumerate}
\item $a_2, a_{k-1} \geq d + 2$ and 
\item there is no $i$ such that $a_i = a_{i+1} = d + 1$.
\end{enumerate}
\end{lemma}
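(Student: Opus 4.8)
The plan is to combine the local-rigidity result with the stress-decomposition lemma and reduce everything to a per-edge question about the $3$-chain cover. Since condition~1 of Theorem~\ref{main} is assumed throughout this section, Lemma~\ref{lemma:section3} tells us the $k$-chain is GLR, so by Corollary~\ref{cor:redrigid} it is GRR if and only if every edge carries some non-zero stress. By Lemma~\ref{lemma:covering} every stress of the $k$-chain is a sum of stresses of the $3$-chains $T_m = C_{a_{m-1},a_m,a_{m+1}}$ in the cover, and a stress of $T_m$ vanishes identically off the edges of $T_m$. Consequently the value of any stress on a fixed edge $\epsilon$ is the sum of contributions coming only from those cover $3$-chains that contain $\epsilon$. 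Therefore $\epsilon$ admits a non-zero stress if and only if at least one cover $3$-chain containing $\epsilon$ has a positive-dimensional space of stresses: the ``if'' direction uses the symmetry of complete bipartite graphs (the remark following Lemma~\ref{lemma:covering}) to produce a stress that is non-zero on every edge of that $3$-chain, and the ``only if'' direction is the contrapositive statement that if all such $3$-chains have zero stress space, every summand contributing to $\epsilon$ is zero.

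Next I would determine precisely which cover $3$-chains have positive-dimensional stress space. Viewing $T_m$ as the complete bipartite graph with parts of sizes $a_{m-1}+a_{m+1}$ and $a_m$, Corollary~\ref{cor:bipstress} gives $\dim \Omega(T_m) = (a_{m-1}+a_{m+1}-d-1)(a_m-d-1)$ once both parts have at least $d+1$ vertices. The middle set satisfies $a_m \geq d+1$ by condition~1, and since at least one of $a_{m-1}, a_{m+1}$ is itself a middle set (hence $\geq d+1$) while the other is a non-empty part, we get $a_{m-1}+a_{m+1} \geq d+2$ in every case, including the end $3$-chains $T_2$ and $T_{k-1}$. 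Thus the first factor is always strictly positive, and the computation collapses to the clean criterion that $\dim \Omega(T_m) > 0$ if and only if $a_m \geq d+2$.

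With this criterion I would read off the edge condition. The edge family between $A_i$ and $A_{i+1}$ is contained in the cover $3$-chains whose middle set is $A_i$ or $A_{i+1}$, namely $T_i$ and $T_{i+1}$ when both exist. The two extreme families, between $A_1,A_2$ and between $A_{k-1},A_k$, each lie in only a single cover $3$-chain ($T_2$ and $T_{k-1}$ respectively), so requiring them to carry a non-zero stress forces $a_2 \geq d+2$ and $a_{k-1} \geq d+2$, which is condition~1 of the lemma. Every interior family between $A_i$ and $A_{i+1}$ with $2 \leq i \leq k-2$ is covered by both $T_i$ and $T_{i+1}$, so it carries a non-zero stress if and only if $a_i \geq d+2$ or $a_{i+1} \geq d+2$; since each $a_i \geq d+1$, this is exactly the requirement that no two consecutive middle sets both equal $d+1$, giving condition~2. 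The boundary instances of condition~2 (namely $a_1=a_2=d+1$ or $a_{k-1}=a_k=d+1$) are automatically excluded by condition~1, so the two stated conditions together are equivalent to ``every edge carries a non-zero stress.''

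The main obstacle I anticipate is the boundary bookkeeping rather than any deep computation: one must notice that the first and last edge families are each covered by a single $3$-chain, which forces the genuine inequalities $a_2, a_{k-1} \geq d+2$ rather than an ``either/or'' condition, and one must verify that the factor $a_{m-1}+a_{m+1}-d-1$ never vanishes so that positivity of $\dim\Omega(T_m)$ depends solely on the size of the middle set. The other point requiring care is the ``only if'' direction, where Lemma~\ref{lemma:covering} is essential: it guarantees there is no hidden contribution to $\epsilon$ from $3$-chains that do not contain $\epsilon$, so that a vanishing middle-set criterion genuinely kills every stress on that edge.
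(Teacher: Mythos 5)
Your proposal is correct and follows essentially the same route as the paper: reduce via Corollary~\ref{cor:redrigid} to the existence of a non-zero stress on every edge, use Lemma~\ref{lemma:covering} to localize stresses to the $3$-chain cover, and apply Corollary~\ref{cor:bipstress} (plus the bipartite symmetry remark) to decide which cover $3$-chains carry non-zero stresses. Your packaging as a single per-edge criterion ($\dim\Omega(T_m)>0$ iff $a_m \geq d+2$, with the end families covered by only one $3$-chain) is just a cleaner organization of the paper's two-directional case analysis, and your boundary bookkeeping, including the verification that the factor $a_{m-1}+a_{m+1}-d-1$ never vanishes, is accurate.
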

\begin{proof}
%We wish to determine the conditions under which there is a stress with non-zero entries everywhere. 
First, note that by Lemma \ref{lemma:section3}, $C_{a_1, a_2, \cdots, a_k}$ is GLR.  We will apply Corollary \ref{cor:redrigid} directly in the rest of this proof, so we only have to determine whether the stresses of the graphs are non-zero on every edge.

[$\Rightarrow$] Suppose either condition $1$ or condition $2$ does not hold. If $a_2 < d+2$, then by Corollary \ref{cor:bipstress}, the $3$-chain $C_{a_1,a_2,a_3}$ (or bipartite graph $K_{a_1+a_3,a_2}$) will have a zero-dimensional space of stresses, and this is the only $3$-chain which includes the edges connecting $A_1$ and $A_2$.  Any stress on the $k$-chain will have entries of 0 on these edges, meaning that they are not redundant and as a consequence, the graph is not GRR.  The same argument applies to $a_{k-1}$.  
%Thus, $a_2, a_{k-1} \geq d + 2$.  This is condition 2 of Theorem \ref{main}.

Moreover, suppose that for some $i$, $a_i = a_{i+1} = d + 1$.  Then by Corollary \ref{cor:bipstress}, both $C_{a_{i-1}, a_i, a_{i + 1}}$ and $C_{a_i, a_{i+1}, a_{i+2}}$ have only the zero stress.  These are the only two $3$-chains that cover the edges between $A_i$ and $A_{i+1}$, so by Lemma \ref{lemma:covering}, any stress on the $k$-chain will have entries of $0$ on these edges.

[$\Leftarrow$] Assume that $a_2, a_{k-1} \geq d + 2$ and there is no $i$ such that $a_i = a_{i+1} = d+1$. Firstly, there is a non-zero stress covering the edges between $A_1$ and $A_2$. To see this, consider $C_{a_1,a_2,a_3}$, where each of $a_1+a_3$ and $a_2$ is at least $d+2$, so this $3$-chain or bipartite graph has a non-zero space of stresses. Hence, each edge between $A_1$ and $A_2$ has a non-zero stress covering it. The same argument can be applied to the edges between $A_{k-1}$ and $A_k$.

For the other edges, there are two cases to consider.  In the first case, for all $3 \leq i \leq k-2$, $a_i \geq d+2$.  In this case, there is obviously a stress with non-zero values everywhere.  Otherwise, there exists some $3 \leq i \leq k-2$ such that $a_i = d+1$.  Then we know that $a_{i-1}, a_{i+1} \geq d+2$, so using Corollary \ref{cor:bipstress} on $C_{a_{i-2},a_{i-1},a_i}$ and $C_{a_i,a_{i+1},a_{i+2}}$, we know that the edges between $A_{i-1}$ and $A_i$ and the edges between $A_i$ and $A_{i+1}$ have non-zero stresses covering them.  Thus, the $k$-chain is GRR.  
%This completes the proof of Theorem \ref{main}.
\end{proof}

We are now able to prove Theorem \ref{main}.

\begin{proof}

[$\Rightarrow$] Suppose that the conditions do not all hold. If condition $1$ fails, then by Proposition \ref{dponeconnected}, the $k$-chain is not $(d+1)$-connected, and therefore not GPR. If either condition $2$ or condition $3$ fails, then by Lemma \ref{lemma:section4}, the $k$-chain is not GRR.

[$\Leftarrow$]  Suppose that the conditions all hold. Since condition $1$ holds, by Proposition \ref{dponeconnected}, the graph is $(d+1)$-connected, and by Lemma \ref{lemma:section3}, it is GLR. Since conditions $2$ and $3$ hold, by Lemma \ref{lemma:section4}, the $k$-chain is GRR. Finally, by Proposition \ref{globrigid}, the graph is not GGR. Therefore, the $k$-chain is GPR.
\end{proof}

\end{section}

%-------------------------------------------------------------------------------------------------------------------------

\begin{section}{Graph Attachments in $\mathbb{R}^5$}
Theorem \ref{main} completely characterizes GPR $k$-chains with $\binom{d+2}{2}$ vertices. However, we also found a new class of GPR graphs which are not necessarily bipartite. Here we present a specific case, which we expect can be generalized in the future. Consider in $\mathbb{R}^5$ the $4$-chain $C_{2,3,5,4}$, and another arbitrary graph $G = (V, E)$ with at least $6$ vertices. We \emph{attach} $C_{2,3,5,4}$ to $G$ by letting $A_1$ and $A_4$ be disjoint $2$-element and $4$-element subsets of vertices in $V$, with none of the vertices of $A_2$ and $A_3$ in $V$. The set of edges precisely consists of all the edges in $G$ and $C_{2,3,5,4}$. Name the resulting graph $G ~ {\cup}_{2,4} ~ C_{2,3,5,4}$.

%We found that it is possible to ``attach'' this $k$-chain to another graph and the resulting framework will be GPR.

\begin{theorem}\label{thm:main2}
Let $G$ be a generically redundantly rigid and $6$-connected graph in $\mathbb{R}^5$ with at least $6$ vertices. Then $G ~ {\cup}_{2,4} ~ C_{2,3,5,4}$ is generically partially rigid.
\end{theorem}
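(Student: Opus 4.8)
The plan is to show that $H := G \cup_{2,4} C_{2,3,5,4}$ satisfies both necessary conditions of Theorem \ref{thm:Hendrickson} yet fails generic global rigidity, so that it is GPR by definition. Thus I will establish (i) $H$ is $6$-connected, (ii) $H$ is generically redundantly rigid, and (iii) $H$ is not generically globally rigid. The engine behind (ii) and (iii) is one structural observation: the two color classes of the bipartite graph $C_{2,3,5,4}$ are $P := A_1 \cup A_3$ and $Q := A_2 \cup A_4$, and in $\mathbb{R}^5$ each has exactly $|P| = |Q| = 7 = d+2$ vertices. Hence each class carries a one–dimensional space of affine dependencies (Remark \ref{remark:br}), spanned by vectors $\lambda$ on $P$ and $\mu$ on $Q$ which for a generic realization have \emph{all} coordinates nonzero and satisfy $\sum_{x \in P}\lambda_x = \sum_{y \in Q}\mu_y = 0$. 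I would first prove that the unique (up to scale) equilibrium stress of $C_{2,3,5,4}$ supported on its own edges is $\omega_0$ given by $(\omega_0)_{xy} = \lambda_x \mu_y$; this is the restriction to the chain of the one–dimensional stress of $K_{P,Q} = K_{7,7}$ furnished by Corollary \ref{cor:bipstress}, and it satisfies equilibrium at every vertex of $A_2 \cup A_3$.

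For (i) and the rigidity claims I would argue as follows. Every vertex of $A_2$ (resp.\ $A_3$) has degree $|A_1|+|A_3| = 7$ (resp.\ $|A_2|+|A_4| = 7$) and attaches into the $6$-connected graph $G$ through $A_1 \cup A_4$, so no deletion of five vertices can detach a new vertex; combined with the $6$-connectivity of $G$ this yields (i) by a short argument in the spirit of Proposition \ref{dponeconnected}. For local rigidity, $G$ is GLR and the eight new vertices are pinned relative to the (rigid) set $A_1 \cup A_4$: the chain rigidity matrix restricted to the $40$ coordinates of $A_2 \cup A_3$ has full column rank, and the block‑triangular form of $df_H$ gives $\rank df_H = (5n-15) + 40 = 5v_H - \binom{6}{2}$, so $H$ is GLR. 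For (ii) I invoke Corollary \ref{cor:redrigid}: the edges of $G$ carry nonzero stress since $G$ is GRR and its stresses extend to $H$ by zero on the chain; the chain edges carry nonzero stress because $\omega_0$ has full support, and $\omega_0$ extends to a genuine equilibrium stress of $H$ by resolving its net boundary forces on $A_1 \cup A_4$ through the edges of $G$. This last step is where the hypotheses on $G$ enter: the boundary load produced by $\omega_0$ is internally balanced, and a rigid framework resolves every balanced load.

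Claim (iii) is the crux, and I would prove it through the stress–matrix criterion (Theorem \ref{thm:strssmtrx}) by showing that \emph{every} stress matrix of a generic $H$ has nullity strictly greater than $d+1 = 6$. The chain part of any $H$-stress is forced to be a multiple $t\,\omega_0$ of $\omega_0$ (a stress vanishing on the chain is a $G$-stress extended by zero, and these form a codimension–one subspace of $\Omega(H)$). Because $\lambda,\mu$ are affine dependencies, the diagonal entries of the stress matrix at $A_2$ and $A_3$ vanish: for $u \in A_2$ one gets $\Omega_{uu} = -\mu_u \sum_{x \in P}\lambda_x = 0$, and symmetrically on $A_3$. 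A direct check then shows that any vector $z$ supported on $A_2 \cup A_3$ with $\sum_{u \in A_2}\mu_u z_u = 0$ and $\sum_{c \in A_3}\lambda_c z_c = 0$ lies in the kernel: the $A_2$- and $A_3$-rows collapse to exactly these two scalar conditions, and the only $V$-rows meeting $A_2 \cup A_3$ are those of $A_1$ and $A_4$, which are annihilated by the same two conditions. This produces a $6$-dimensional space of kernel vectors localized on the new vertices, independent of the $(d+1)$-dimensional trivial kernel (a nonzero affine function cannot vanish on the $\ge 6$ generic vertices of $V$). Hence every stress matrix has nullity at least $(d+1)+6$, none attains nullity $d+1$, and $H$ is not GGR.

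The main obstacle, and the step on which I would spend the most care, is pinning down the chain stress exactly: proving that the chain–supported stresses of $H$ are one–dimensional and spanned by $\omega_0$ with $(\omega_0)_{xy} = \lambda_x\mu_y$, and that $\lambda,\mu$ have full support. Everything else is bookkeeping, but this fact is used in two opposite–looking ways—full support of $\omega_0$ delivers redundant rigidity, while the vanishing of the coordinate sums of $\lambda,\mu$ (hence of the $A_2,A_3$ diagonal) forces the extra kernel and destroys global rigidity. Both features stem from each color class having exactly $d+2$ vertices, which is precisely why the parameters $(2,3,5,4)$ succeed in $\mathbb{R}^5$; the same count indicates how one should look for analogous attachments in higher dimensions.
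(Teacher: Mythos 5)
Your overall architecture is sound, and where it is complete it takes a genuinely different route from the paper. Your identification of the unique stress as $(\omega_0)_{xy}=\lambda_x\mu_y$ built from the affine dependencies of the two $7$-point color classes agrees with the paper's use of the $K_{7,7}$ stress (Corollary \ref{cor:bipstress}); your full-support claim for $\lambda,\mu$ is correct (any six of seven generic points in $\mathbb{R}^5$ are affinely independent). For redundant rigidity, your ``resolve the balanced boundary load through $G$'' construction replaces the paper's Hennenberg induction (Lemma \ref{lemma:Knrigid}) and prescribed-stress descent (Lemma \ref{lemma:attachredrigid}); it works because the load left at $A_1\cup A_4$ is resolved in $K_v$ by the stress $\lambda_x\mu_y$ on the missing $A_1\times A_4$ edges, hence is an equilibrium load, and an infinitesimally rigid framework resolves every equilibrium load --- a standard fact, but one the paper never states, so you must prove or cite it. For non-global-rigidity, your computation is correct and is the most interesting divergence: the diagonal of the stress matrix vanishes on $A_2\cup A_3$ because $\sum_{x\in P}\lambda_x=\sum_{y\in Q}\mu_y=0$, the $A_2$-, $A_3$-, $A_1$-, $A_4$-rows applied to a vector supported on $A_2\cup A_3$ collapse to the two scalar conditions you state, and the resulting $6$-dimensional kernel is independent of the trivial kernel since a nonzero affine function cannot vanish at the $\ge 6$ generic points of $V(G)$. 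This gives nullity $\ge 12$ for \emph{every} stress matrix of every generic realization, so Theorem \ref{thm:strssmtrx} applies directly; the paper instead proves non-GGR for $K_6\cup_{2,4}C_{2,3,5,4}$ and propagates it to arbitrary $G$ via the replacement machinery (Lemma \ref{notGGR}, Corollary \ref{cor:attachglobrigid}). Your connectivity sketch matches Remark \ref{remark:attachconnected}.

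The genuine gap is the linchpin you assert but never prove: that the $41\times 40$ matrix $M$ of chain-edge rows restricted to the coordinates of $A_2\cup A_3$ has full column rank for generic $p$ --- equivalently, that the stresses on chain edges in equilibrium at every vertex of $A_2\cup A_3$ form exactly the one-dimensional span of $\omega_0$, since that space is $\ker M^T$ of dimension $41-\rank M$. Everything in your proposal funnels through this: the block-triangular count $\rank df_H=(5n-15)+40$ for GLR, the full-support stress for GRR via Corollary \ref{cor:redrigid}, and the forcing of the chain part of any $H$-stress to be $t\,\omega_0$ in your nullity bound. But this claim is precisely the paper's Proposition \ref{prop:rigidunion} in disguise: given that $K_6$ is GLR, the statement that $K_6\cup_{2,4}C_{2,3,5,4}$ is GLR is equivalent to $\rank M=40$ (a nonzero vector of $\ker M$ extends by zero to a nontrivial infinitesimal flex, because a trivial motion vanishing on six generic points of $\mathbb{R}^5$ is zero), and the authors state explicitly that they found no conceptual proof of this and instead verified it with the randomized rank algorithm, which admits no false positives. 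The dimension count in your favor ($15$ constraints between $A_2$ and $A_3$ against the $3\cdot 3+5\cdot 1=14$ degrees of freedom that survive the pinning by $A_1$ and $A_4$) makes the claim plausible but is not a proof, since generic bilinear systems of this shape are not automatically of full rank. So you must either exhibit a single explicit realization (or the modular-arithmetic computation, as the paper does) certifying $\rank M=40$, or find the conceptual argument the authors could not; as written, ``has full column rank'' is the one step that would fail, and flagging it as the main obstacle does not discharge it.
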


First we need to show that the new graph is GLR. To do this, we carefully examine the graph $K_6 ~ {\cup}_{2,4} ~ C_{2,3,5,4}$ [Figure \ref{fig:ring}\subref{fig:ring1}].

\begin{figure}
\subfloat[]{\includegraphics[scale=1]{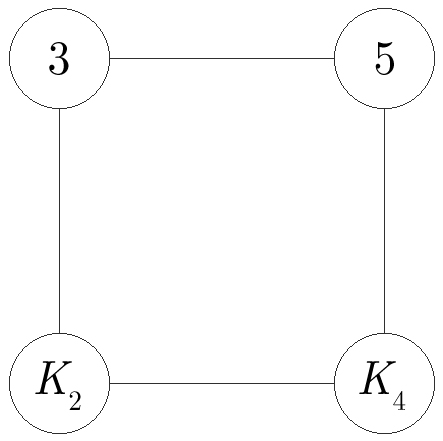}\label{fig:ring1}}\hspace{1in}
\subfloat[]{\includegraphics[scale=1]{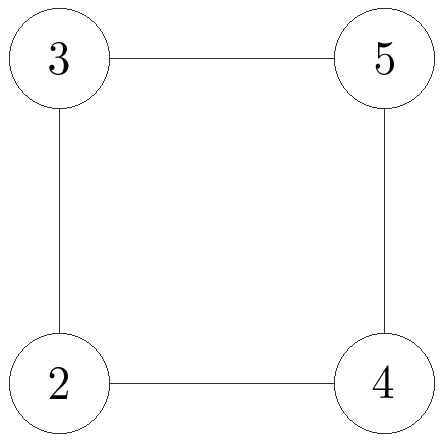}\label{fig:ring2}}
\caption{Each node represents a set of vertices: the numbers represent independent sets of the size indicated, and $K_2$ and $K_4$ represent complete graphs.  The lines represent edges between every combination of vertices in the nodes connected.  On left: $K_6 ~ {\cup}_{2,4} ~ C_{2,3,5,4}$.  On right: Graph obtained from left by deleting edges of $K_2$ and $K_4$.}
\label{fig:ring}
\end{figure}

\begin {proposition}\label{prop:rigidunion}
The graph $K_6 ~ {\cup}_{2,4} ~ C_{2,3,5,4}$ is generically locally rigid in $\mathbb{R}^5$.
\end {proposition}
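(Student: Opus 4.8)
The plan is to reduce the statement to an explicit, finite rank computation. By Theorem~\ref{thm:rigidity} it suffices to show that a single generic realization $p$ is infinitesimally rigid. The graph has $v = |A_1|+|A_2|+|A_3|+|A_4| = 2+3+5+4 = 14$ vertices and $e = \binom{6}{2} + (2\cdot 3 + 3\cdot 5 + 5\cdot 4) = 15 + 41 = 56$ edges, so $vd - \binom{d+1}{2} = 70 - 15 = 55$. Since the rank of the rigidity matrix can never exceed $55$, the whole content is to prove $\rank df_G(p) \ge 55$; equivalently, by Proposition~\ref{prop:rmstress}, that the space of stresses is exactly one-dimensional. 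Note that deleting the internal edges of $K_2$ (on $A_1$) and $K_4$ (on $A_4$) leaves the ``ring'' of Figure~\ref{fig:ring}\subref{fig:ring2}, which is precisely the complete bipartite graph $K_{7,7}$ on the parts $A_1 \cup A_3$ and $A_2 \cup A_4$; by Corollary~\ref{cor:bipstress} this ring already carries a one-dimensional space of stresses, so the space of stresses of $K_6 ~{\cup}_{2,4}~ C_{2,3,5,4}$ has dimension at least $1$. The task is therefore to show that adjoining the seven internal edges of $K_2$ and $K_4$ creates no new stress.

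First I would pin the core. The six vertices $A_1 \cup A_4$ carry the complete graph $K_6$, which is infinitesimally rigid and affinely spans $\mathbb{R}^5$; hence every infinitesimal flex of $G(p)$ can be normalized to vanish on $A_1 \cup A_4$, and $G(p)$ is infinitesimally rigid if and only if the only infinitesimal flex fixing the core is the zero flex. Setting the velocity to $0$ on $A_1 \cup A_4$ discards the $K_6$ edges and leaves the $41$ ``bridge'' edges ($A_1$--$A_2$, $A_2$--$A_3$, $A_3$--$A_4$) acting on the $40 = 8 \cdot 5$ coordinates of the free vertices $A_2 \cup A_3$. So the proposition is equivalent to the statement that this pinned bridge is rigid, i.e.\ that its $41 \times 40$ matrix has rank $40$.

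Next I would reduce the pinned system dimension by dimension, in the spirit of Remark~\ref{affine}. Each vertex of $A_3$ is joined to the four fixed vertices of $A_4$; for generic points these four difference vectors are independent, so the constraints confine the velocity of each $A_3$ vertex to a $1$-dimensional line, leaving the five $A_3$ velocities a total of $5$ free scalars. Likewise each vertex of $A_2$ is joined to the two fixed vertices of $A_1$, confining its velocity to a $3$-dimensional subspace, for $9$ free parameters. What remains are the $15$ coupling constraints of the complete bipartite graph $K_{3,5}$ between $A_2$ and $A_3$, now acting on this reduced $14$-dimensional space of admissible velocities. Rigidity of the bridge is exactly the assertion that these $15$ equations have rank $14$, i.e.\ admit only the trivial solution, with a one-dimensional space of dependencies accounting for the single stress found above.

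I expect this last step to be the main obstacle. It is genuinely a rank computation and cannot be read off from the clean complete-bipartite formula of Corollary~\ref{cor:bipstress}, because $G$ is not bipartite and the $K_2$/$K_4$ edges couple the two sides of the ring. Concretely, for each $A_3$ vertex its three neighbours in $A_2$ give three equations in the single scalar unknown, producing two consistency relations per vertex, hence ten linear conditions on the $9$-dimensional space of $A_2$ velocities; one must show these force the $A_2$ velocities to vanish (rank $9$), after which the $A_3$ scalars vanish as well. Since infinitesimal rigidity is a generic property (Theorem~\ref{thm:rigidity}), it is enough to exhibit one realization at which this maximal rank holds; I would verify the nonvanishing of a $55 \times 55$ minor either by a convenient explicit choice of generic coordinates or, as elsewhere in the paper, by the randomized rank check modulo a large prime. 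Establishing this full rank completes the proof that $\rank df_G(p) = 55$, and hence that $K_6 ~{\cup}_{2,4}~ C_{2,3,5,4}$ is generically locally rigid.
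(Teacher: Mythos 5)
Your proposal is correct, but it takes a genuinely different route from the paper, whose proof of Proposition~\ref{prop:rigidunion} contains no structural argument at all: the authors explicitly say they have not found a conceptual proof, and simply report that the randomized algorithm (random realization, rigidity matrix modulo a large prime, rank check on the full $56 \times 70$ matrix) certifies rank $55$ at one realization, which suffices because the rank can only decrease at non-generic realizations or special primes. You, by contrast, first pin the generically infinitesimally rigid $K_6$ on $A_1 \cup A_4$ (valid: every flex restricts on the core to a trivial motion, and a trivial motion vanishing on six affinely spanning points in $\mathbb{R}^5$ vanishes identically), then use the $A_3$--$A_4$ and $A_1$--$A_2$ adjacencies to cut the pinned kernel down to $5 + 9 = 14$ parameters, and finally reduce the whole proposition to the claim that ten consistency relations on the nine-dimensional space of $A_2$-velocities generically have rank $9$. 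All of these reductions check out, and your bookkeeping ($v = 14$, $e = 56$, target rank $55$, one-dimensional stress space carried by the $K_{7,7}$ ring via Corollary~\ref{cor:bipstress}) agrees exactly with the paper's. What your approach buys is a drastic shrinking and localization of the computation --- from a $55 \times 55$ minor of the full rigidity matrix to roughly a $10 \times 9$ generic rank check --- together with a precise identification of where degeneracy could occur, which is a real step toward the conceptual proof the authors say is missing. What it does not buy is elimination of the computational core: as you honestly flag, the final rank-$9$ claim cannot be read off from the bipartite stress formula (the $K_2$ and $K_4$ edges couple the two sides of the ring) and must still be verified at one explicit or random realization, justified by the same one-sided-error principle the paper invokes. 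So your proof, once that prescribed small verification is carried out, is complete and strictly more informative than the paper's, while ultimately resting on the same kind of certificate.
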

\begin {proof}
We have not yet found a conceptual proof of this fact. However, using the algorithm for testing generic local rigidity described earlier, we have found one locally rigid realization, and the algorithm cannot return a false positive for a graph being GLR, since the rank of the rigidity matrix can only decrease due to non-generic realizations and special primes. This proves that the graph is GLR.
\end {proof}

We want to know which edges of $K_6 ~ {\cup}_{2,4} ~ C_{2,3,5,4}$ are redundant. It is possible to do so by finding its stresses. The graph $K_6 ~ {\cup}_{2,4} ~ C_{2,3,5,4}$ has $56$ edges, and its rigidity matrix has rank $55$. Hence, it has a $1$-dimensional space of stresses. We can easily identify all the stresses of the graph.

Remove the edges of the $K_2$ and $K_4$ subgraphs [Figure \ref{fig:ring}\subref{fig:ring2}]. The remaining graph is the bipartite graph $K_{7,7}$.   By Corollary \ref{cor:bipstress}, $K_{7,7}$ has a $1$-dimensional space of stresses, which is a subspace of the $1$-dimensional space of stresses of $K_6 ~ {\cup}_{2,4} ~ C_{2,3,5,4}$. Hence, every stress in $K_6 ~ {\cup}_{2,4} ~ C_{2,3,5,4}$ must also be a stress of $K_{7,7}$.  Moreover, by symmetry, if there is a non-zero stress on one edge of $K_{7,7}$, there is a non-zero stress on every edge.  Therefore, all the edges of $K_6 ~ {\cup}_{2,4} ~ C_{2,3,5,4}$ have non-zero stresses except for the edges on $K_2$ and $K_4$. In particular, by Corollary \ref{cor:redrigid}, each of the edges of the subgraph $C_{2,3,5,4}$ are redundant. More generally, for any $K_i ~ {\cup}_{2,4} ~ C_{2,3,5,4}$ with $i \geq 6$, we can find a non-zero stress on the edges of $C_{2,3,5,4}$, making each of these edges redundant.

%This bipartite graph has a $1$-dimensional space of stresses. Extending the arguments of Bolker and Roth \cite{br}, Gortler, Healy and Thurston \cite{ght}, in which they found the space of stresses for $K_{5,5}$ in $\mathbb{R}^3$, it is possible to identify the single stress, up to scale, for $K_{7,7}$ in $\mathbb{R}^5$. The stress is non-zero everywhere and the stress matrix has rank $2$ and nullity $12$.

%We will also use a very useful technique called vertex splitting introduced by Walter Whiteley \cite{ww} to examine the rigidity of our graphs. In this technique, one constructs a new graph as follows: begin with a graph $G$ and some dimension $d$, and pick some vertex $v$ connected to at least $d-1$ other vertices.  Add another vertex $v'$ to $G$, and add an edge between $v'$ and $v$.  Pick a set of $d-1$ other vertices in $G$ connected to $v$, and add $d-1$ new edges to connect them to $v'$.  Finally, partition the rest of the vertices connected to $v$ into two possibly empty sets; adjust the edges of $G$ so that one set is only connected to $v$ and the other is only connected to $v'$.

We will also use a very useful technique called a Hennenberg operation to examine the rigidity of our graphs.  One constructs a new graph with the Hennenberg operation as follows: begin with a graph $G$ and some dimension $d$, and pick any two vertices $v_i$ and $v_j$ with an edge between them. Remove this edge, and add a vertex $v'$ to $G$, connecting it to $v_i$, $v_j$, and $d-1$ other vertices.  This new graph, denoted by $G'$, is obtained from $G$ by a \emph{Hennenberg operation}.

Connelly described the following theorem regarding Hennenberg operations.

\begin{theorem}[Connelly \cite{cn2}]\label{thm:Hennenberg}
If $G$ is generically locally rigid in $\mathbb{R}^d$, and $G'$ is obtained from $G$ by an Hennenberg operation, then $G'$ is generically locally rigid in $\mathbb{R}^d$.
\end{theorem}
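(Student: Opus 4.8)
The plan is to prove the statement through the infinitesimal characterization of generic local rigidity supplied by Theorem~\ref{thm:rigidity}, exploiting the fact that the maximum of $\rank df_{G'}(q)$ over all realizations $q$ is attained on a dense set and hence at generic points. It therefore suffices to exhibit a single, possibly non-generic, realization $p'$ of $G'$ that is infinitesimally rigid: lower semicontinuity of the rank then forces generic realizations of $G'$ to be infinitesimally rigid as well, and Asimow--Roth delivers generic local rigidity. Throughout, write $\{v_i,v_j\}$ for the split edge, $v'$ for the new vertex, and $w_1,\dots,w_{d-1}$ for the $d-1$ additional vertices to which $v'$ is joined.

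First I would fix a generic realization $p$ of $G$, which is infinitesimally rigid since $G$ is generically locally rigid. I would then build $p'$ by keeping every original vertex at its location in $p$ and placing $v'$ at a point collinear with $p_i$ and $p_j$ on the line they span, distinct from both. The purpose of this deliberately non-generic placement is that the edge directions $p'-p_i$ and $p'-p_j$ become parallel, each a scalar multiple of $d_{ij}:=p_i-p_j$. Analyzing an arbitrary infinitesimal flex $u=(u_1,\dots,u_v,u')$ of $G'(p')$, the constraints from the two edges at $v'$ that are collinear with $v_i,v_j$ read $d_{ij}\cdot(u'-u_i)=0$ and $d_{ij}\cdot(u'-u_j)=0$, whose difference yields $d_{ij}\cdot(u_i-u_j)=0$ --- precisely the constraint of the deleted edge $\{v_i,v_j\}$. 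Hence the restriction $u|_V$ satisfies every edge constraint of $G$ and is an infinitesimal flex of $G$; since $G(p)$ is infinitesimally rigid, $u|_V$ is trivial, of the form $u_k = S p_k + t$ for a skew-symmetric $S$ and a translation $t$.

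It then remains to pin down $u'$. The trivial field $S p' + t$ is always a solution of the constraints at $v'$, because infinitesimal Euclidean motions preserve all distances, so the flex $u$ is trivial exactly when $u'=Sp'+t$ is the only solution. The $d+1$ edges at $v'$ constrain $u'$ along the directions $d_{ij}$ (appearing twice, hence contributing one effective direction) and $p'-p_{w_m}$ for $m=1,\dots,d-1$. So I would verify that the $d$ vectors $\{d_{ij},\,p'-p_{w_1},\dots,p'-p_{w_{d-1}}\}$ are linearly independent, which determines $u'$ uniquely and forces $u'=Sp'+t$. Then $\ker df_{G'}(p')$ consists exactly of the $\binom{d+1}{2}$-dimensional space of trivial motions, so $G'(p')$ is infinitesimally rigid, and the reduction in the first paragraph finishes the proof.

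The main obstacle is this last independence check, which is where the realization must be chosen with care: the collinear placement of $v'$ fixes one of the $d$ directions, so I must confirm that for generic $p$ the remaining $d-1$ directions, together with $d_{ij}$, still span $\mathbb{R}^d$. Since collinearity constrains $p'$ only to a line while the positions of $w_1,\dots,w_{d-1}$ remain generic, these directions are generically independent, and the parameter locating $v'$ on the line can itself be chosen generically to avoid the measure-zero coincidences. Making this genericity argument airtight --- and checking that it coexists with the intentionally non-generic collinearity that powers the whole reduction --- is the delicate point; the rest is routine bookkeeping of the rigidity matrix.
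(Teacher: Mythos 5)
Your proof is correct, but note that the paper does not actually prove this statement: Theorem~\ref{thm:Hennenberg} is quoted from Connelly \cite{cn2} without proof, so there is no internal argument to compare against. Your argument is the classical edge-splitting proof from the literature --- place $v'$ collinear with $p_i$ and $p_j$ so that the two new edge constraints subtract to recover the deleted constraint $d_{ij}\cdot(u_i-u_j)=0$, reduce to infinitesimal rigidity of $G(p)$, pin down $u'$ via the $d$ effective edge directions at $v'$, and pass from the one deliberately non-generic infinitesimally rigid realization to generic realizations by semicontinuity of rank --- and all the steps check out: the scalars relating $p'-p_i$ and $p'-p_j$ to $d_{ij}$ are nonzero since $p'$ is distinct from both endpoints, and the independence of $\{d_{ij}, p'-p_{w_1},\dots,p'-p_{w_{d-1}}\}$ holds generically (e.g., normalizing $p_i=0$, $p_j=e_1$, column reduction shows the determinant is generically nonzero), exactly as you indicate in your final paragraph.
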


%Whiteley proved the following theorem regarding vertex splitting.

%\begin{theorem}[Whiteley \cite{ww}]\label{thm:vertexsplitting}
%If $G$ is generically locally rigid in $\mathbb{R}^d$ and $G'$ is obtained from $G$ through vertex splitting, then $G'$ is generically locally rigid in $\mathbb{R}^d$.
%\end{theorem}

\begin{lemma}\label{lemma:Knrigid}
The graph $K_n ~ {\cup}_{2,4} ~ C_{2,3,5,4}$ is generically locally rigid in $\mathbb{R}^5$ for all $n \geq 6$.
\end{lemma}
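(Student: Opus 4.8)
The plan is to argue by induction on $n$, taking the base case $n = 6$ from Proposition \ref{prop:rigidunion} and bridging each step with a single Hennenberg operation followed by a sequence of edge additions. The key structural observation is that passing from $K_n ~ {\cup}_{2,4} ~ C_{2,3,5,4}$ to $K_{n+1} ~ {\cup}_{2,4} ~ C_{2,3,5,4}$ amounts to adjoining one new vertex $w$ to the complete-graph part and joining it to all $n$ existing vertices of $K_n$. Since $w$ is not placed in the independent sets $A_2$ or $A_3$, it receives no edges from the attached chain, so the copy of $C_{2,3,5,4}$ is left untouched and $w$ has degree exactly $n$ in the target graph.

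For the inductive step, assume $K_n ~ {\cup}_{2,4} ~ C_{2,3,5,4}$ is GLR. First I would perform a Hennenberg operation inside the $K_n$ part: since $n \geq 6$, I can choose an edge $\{v_i, v_j\}$ of $K_n$, delete it, and introduce $w$ joined to $v_i$, $v_j$, and $d - 1 = 4$ further vertices of $K_n$ (six neighbors in all, every one of them lying in the $K_n$ part). By Theorem \ref{thm:Hennenberg}, the resulting graph $H$ is generically locally rigid in $\mathbb{R}^5$. Because each of the six new edges joins $w$ to a vertex of $K_n$, every edge of $H$ is also an edge of $K_{n+1} ~ {\cup}_{2,4} ~ C_{2,3,5,4}$; that is, the target graph is a supergraph of $H$.

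It then remains to recover the missing edges, namely the deleted edge $\{v_i, v_j\}$ together with the $n - 6$ edges from $w$ to the remaining vertices of $K_n$. Each of these is an edge addition, and adjoining edges to a generically locally rigid graph preserves generic local rigidity: once $\rank df_G(p) = vd - \binom{d+1}{2}$ attains its maximum possible value, appending rows to the rigidity matrix cannot lower the rank, so it stays maximal and Theorem \ref{thm:rigidity} applies. Hence $K_{n+1} ~ {\cup}_{2,4} ~ C_{2,3,5,4}$ is GLR, closing the induction.

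The genuine content sits entirely in the base case, Proposition \ref{prop:rigidunion}, which is settled computationally; the inductive machinery above is soft. The one point demanding care is ensuring that the Hennenberg vertex $w$ is joined \emph{only} to vertices of $K_n$ and never to $A_2$ or $A_3$, so that no spurious edge outside the target graph is introduced and the final stage is a pure sequence of edge additions rather than edge exchanges (which could lower the rank). Because $n \geq 6$ guarantees at least six vertices in the $K_n$ part, such a choice is always available, so no real obstacle arises.
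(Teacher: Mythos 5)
Your proposal is correct and takes essentially the same route as the paper: induction on $n$ with the computational base case from Proposition \ref{prop:rigidunion}, a Hennenberg operation performed entirely inside the $K_n$ part (new vertex joined to $v_i$, $v_j$, and four other $K_n$-vertices) justified by Theorem \ref{thm:Hennenberg}, followed by re-adding the deleted edge and the remaining edges from the new vertex to reach $K_{n+1} ~ {\cup}_{2,4} ~ C_{2,3,5,4}$. Your explicit remarks---that edge additions preserve generic local rigidity because appending rows cannot lower the already-maximal rank of the rigidity matrix, and that the new vertex must avoid $A_2$ and $A_3$---simply spell out details the paper leaves implicit.
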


%\begin{proof} We will prove the lemma by induction.  For the base case, by Proposition \ref{prop:rigidunion}, $K_6 ~ {\cup}_{2,4} ~ C_{2,3,5,4}$ is generically locally rigid in $\mathbb{R}^5$.

%For the inductive step, assume $K_n ~ {\cup}_{2,4} ~ C_{2,3,5,4}$ is generically locally rigid in $\mathbb{R}^5$.  Split this graph, choosing $v$ to be any vertex in $K_n$.  Add an edge between $v$ and $v'$ and also connect $v'$ to $4$ other arbitrary vertices in $K_n$. Divide the rest of the vertices adjacent to $v$ such that $v'$ connects to none of them and $v$ connects to all of them. This new graph is GLR in $\mathbb{R}^5$ by the above theorem. Finally, we can connect $v'$ to the rest of the vertices in $K_n$.   This constructs the graph $K_{n+1} ~ {\cup}_{2,4} ~ C_{2,3,5,4}$, which is GLR because it is constructed by adding edges to a GLR graph.
%\end{proof}

\begin{proof} We will prove the lemma by induction.  For the base case, by Proposition \ref{prop:rigidunion}, $K_6 ~ {\cup}_{2,4} ~ C_{2,3,5,4}$ is generically locally rigid in $\mathbb{R}^5$.

For the inductive step, assume $K_n ~ {\cup}_{2,4} ~ C_{2,3,5,4}$ is generically locally rigid in $\mathbb{R}^5$.  Use a Hennenberg operation to create a new graph, choosing $v_i$ and $v_j$ to be any vertices in $K_n$ and connecting $v'$ to $4$ other vertices in $K_n$.  This new graph is GLR in $\mathbb{R}^5$ by the above theorem. Finally, we can add the edge between $v_i$ and $v_j$, and connect $v'$ to the rest of the vertices in $K_n$.   This constructs the graph $K_{n+1} ~ {\cup}_{2,4} ~ C_{2,3,5,4}$, which is GLR because it is constructed by adding edges to a GLR graph.
\end{proof}

The following important result arises from the previous lemma.

\begin {lemma}
\label{lemma:attachredrigid}
Let $G$ be a generically redundantly rigid graph in $\mathbb{R}^5$ with at least $6$ vertices. Then $G ~ {\cup}_{2,4} ~ C_{2,3,5,4}$ is generically redundantly rigid in $\mathbb{R}^5$.
\end {lemma}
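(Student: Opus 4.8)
The plan is to invoke Corollary \ref{cor:redrigid}: it suffices to show that $G \cup_{2,4} C_{2,3,5,4}$ is generically locally rigid and that every one of its edges carries a non-zero stress. Write $n = |V(G)| \ge 6$, let $K_n$ denote the complete graph on $V(G)$, and let $F = E(K_n) \setminus E(G)$ be the set of non-edges of $G$. Since $A_1, A_4 \subseteq V(G)$, the graphs $K_n \cup_{2,4} C_{2,3,5,4}$ and $G \cup_{2,4} C_{2,3,5,4}$ share the same vertex set, and the latter is obtained from the former by deleting exactly the edges in $F$. Throughout I fix a single generic realization $p$ of $G \cup_{2,4} C_{2,3,5,4}$; it restricts to generic realizations of all subgraphs sharing its vertex set, so every stress space below is computed at $p$. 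The edges of $G \cup_{2,4} C_{2,3,5,4}$ fall into two classes, the edges inherited from $G$ and the edges of the attached $C_{2,3,5,4}$, and I will treat them separately.

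For generic local rigidity I would compare with $K_n \cup_{2,4} C_{2,3,5,4}$, which is GLR by Lemma \ref{lemma:Knrigid}. Because $G$ is generically redundantly rigid it is in particular GLR, so $G = K_n \setminus F$ is GLR; applying the forward direction of Proposition \ref{prop:redrigid} to the rigid graph $K_n$ shows that every prescribed assignment of values on $F$ is realized by some stress of $K_n$. Extending each such stress by zero on the edges of $C_{2,3,5,4}$ produces a stress of $K_n \cup_{2,4} C_{2,3,5,4}$ with the same values on $F$, so the reverse direction of Proposition \ref{prop:redrigid}, now applied to $K_n \cup_{2,4} C_{2,3,5,4}$ with deletion set $F$, yields that $(K_n \cup_{2,4} C_{2,3,5,4}) \setminus F = G \cup_{2,4} C_{2,3,5,4}$ is GLR.

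It remains to place a non-zero stress on every edge. For an edge of $G$ this is immediate: since $G$ is GRR it admits a stress non-zero on that edge (Corollary \ref{cor:redrigid}), and any stress of $G$ extends by zero on the new edges to a stress of $G \cup_{2,4} C_{2,3,5,4}$, the equilibrium condition being unaffected at the vertices of $G$ and trivially satisfied at the vertices of $A_2, A_3$. The interesting case is an edge of $C_{2,3,5,4}$. Recall from the analysis preceding Lemma \ref{lemma:Knrigid} that deleting the $K_2$ and $K_4$ edges from $K_6 \cup_{2,4} C_{2,3,5,4}$ leaves the complete bipartite graph $K_{7,7}$ on parts $A_1 \cup A_3$ and $A_2 \cup A_4$; by Corollary \ref{cor:bipstress} and the edge-symmetry of complete bipartite graphs, its one-dimensional stress space has a representative $\sigma$ non-zero on every edge of $K_{7,7}$, in particular on every edge of $C_{2,3,5,4}$. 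Viewing $\sigma$ as a stress of $K_n \cup_{2,4} C_{2,3,5,4}$ by extension by zero, its only possibly non-zero entries outside $C_{2,3,5,4}$ lie on the $A_1$--$A_4$ edges, and those among them belonging to $F$ are what obstruct $\sigma$ from being a stress of $G \cup_{2,4} C_{2,3,5,4}$.

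The main obstacle, and the crux of the argument, is to remove those unwanted entries without disturbing the entries on $C_{2,3,5,4}$. To do this I would again use the forward direction of Proposition \ref{prop:redrigid} for $K_n$: there is a stress $\tau$ of $K_n$ whose values on $F$ equal $-\sigma|_F$, and extending $\tau$ by zero on the edges of $C_{2,3,5,4}$ keeps it zero there. Then $\omega = \sigma + \tau$ is a stress of $K_n \cup_{2,4} C_{2,3,5,4}$ that vanishes on all of $F$, hence restricts to a genuine stress of $G \cup_{2,4} C_{2,3,5,4}$, while on the edges of $C_{2,3,5,4}$ it still equals $\sigma$ and is therefore non-zero everywhere. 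Combining the two classes, every edge carries a non-zero stress, and with generic local rigidity established, Corollary \ref{cor:redrigid} finishes the proof. I expect the cancellation step — arranging a correcting stress that annihilates the $A_1$--$A_4$ contributions of $\sigma$ precisely when $G$ fails to contain those edges — to be the only genuinely delicate point, and it is exactly here that the generic rigidity of $G$, rather than any connectivity hypothesis, is used.
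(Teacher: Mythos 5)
Your proof is correct, and it is built from the same three ingredients as the paper's: Lemma \ref{lemma:Knrigid} (generic local rigidity of $K_n \cup_{2,4} C_{2,3,5,4}$), Proposition \ref{prop:redrigid} applied to $K_n$ with extension-by-zero stresses prescribing arbitrary values on the non-edges of $G$, and the $K_{7,7}$ stress that is non-zero on every edge of $C_{2,3,5,4}$. The organization, however, is genuinely different. The paper works straight from the definition of redundant rigidity, deleting one edge at a time and showing each resulting graph is GLR: if the deleted edge lies in $G$, it runs essentially your GLR argument with $G' = G \setminus e$ in place of $G$; if the deleted edge lies in $C_{2,3,5,4}$, it first removes that edge from $K_v \cup_{2,4} C_{2,3,5,4}$ (legitimate because the $C_{2,3,5,4}$-edges were shown redundant there via the $K_{7,7}$ stress) and only afterwards strips the edges of $K_v \setminus G$, so the order of deletions does the work that your cancellation does. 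You instead establish GLR of $G \cup_{2,4} C_{2,3,5,4}$ once and route everything through Corollary \ref{cor:redrigid}, exhibiting for each edge an explicit stress non-zero on it; the correcting stress $\tau$ with $\tau|_F = -\sigma|_F$, supported off $C_{2,3,5,4}$, is your substitute for the paper's deletion ordering, and the step is valid precisely because $\omega = \sigma + \tau$ vanishes on $F$ and so restricts to a genuine stress of $G \cup_{2,4} C_{2,3,5,4}$ while agreeing with $\sigma$ on the attachment. Your handling of the $G$-edges is in fact slightly more direct than the paper's (a stress of $G$ witnessing redundancy extends by zero, with equilibrium trivially holding at the vertices of $A_2$ and $A_3$, and no rank bookkeeping is needed), whereas the paper's per-edge formulation never has to name an explicit stress. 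Both arguments are sound; yours makes the stress transfer concrete and simultaneous across all edges, the paper's stays at the level of rank-and-deletion arguments, and you are also right that only generic redundant rigidity of $G$, not $6$-connectedness, enters this lemma.
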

\begin {proof}
There are two cases to be examined. We can either remove an edge from $G$ or from $C_{2,3,5,4}$. We want to show that each of the resulting graphs is GLR.

In the first case, we remove an edge from $G$ to get $G'$, which is still GLR. Suppose $G'$ has $v$ vertices.  $G'$ is a subgraph of $K_v$, so by Proposition \ref{prop:redrigid} we can assign stresses for $K_v$ with any values on the edges of $K_v \setminus G'$. We know that $K_v ~ {\cup}_{2,4} ~ C_{2,3,5,4}$ is GLR. By assigning zero stresses to the edges of $C_{2,3,5,4}$, it is possible to assign stresses for $K_v ~ {\cup}_{2,4} ~ C_{2,3,5,4}$ with any values on $K_v  \setminus G' $. Thus, by Proposition \ref{prop:redrigid}, $G' ~ {\cup}_{2,4} ~ C_{2,3,5,4}$ is GLR.

In the second case, we remove an edge from $C_{2,3,5,4}$ and denote the resulting graph $C'_{2,3,5,4}$.  Now suppose that $G$ has $v$ vertices.  The graph $K_v ~ {\cup}_{2,4} ~ C_{2,3,5,4}$ is GLR, and each of the edges of the subgraph $C_{2,3,5,4}$ is redundant. Therefore, by Proposition \ref{prop:redrigid}, $K_v ~ {\cup}_{2,4} ~ C'_{2,3,5,4}$ is GLR.  Finally, using the same argument as before, it is possible to create stresses for $K_v$ with any values on $K_v \setminus G$.  These stresses will still exist on $K_v ~ {\cup}_{2,4} ~ C'_{2,3,5,4}$, so by Proposition \ref{prop:redrigid}, $G ~ {\cup}_{2,4} ~ C'_{2,3,5,4}$ is GLR.  Consequently, $G ~ {\cup}_{2,4} ~ C_{2,3,5,4}$ is GRR.
%In the second case, we remove an edge from $C_{2,3,5,4}$ and denote the resulting graph $C'_{2,3,5,4}$.  Now suppose that $G$ has $v$ vertices.  We know  that $K_v ~ {\cup}_{2,4} ~ C_{2,3,5,4}$ is GLR.  Moreover, we also know that there is a one-dimensional stress on $C_{2,3,5,4}$ with a value on every edge,  and this stress still exists on $K_v ~ {\cup}_{2,4} ~ C_{2,3,5,4}$, so by Proposition \ref{prop:redrigid}, $K_v ~ {\cup}_{2,4} ~ C'_{2,3,5,4}$ is GLR.  Finally, because $G$ is a GLR subgraph of $K_v$, each edge removed from $K_v$ to get $G$ contributes an independent stress.  These stresses will still exist on $K_v ~ {\cup}_{2,4} ~ C'_{2,3,5,4}$, so by Proposition \ref{prop:redrigid}, $G ~ {\cup}_{2,4} ~ C'_{2,3,5,4}$ is GLR.  Thus, $G ~ {\cup}_{2,4} ~ C_{2,3,5,4}$ is GRR.
\end {proof}

\begin {remark}
\label{remark:attachconnected}
Let $G$ be any $6$-connected graph. Then $G ~ {\cup}_{2,4} ~ C_{2,3,5,4}$ is still $6$-connected. Removing $5$ vertices from $G$ will not disconnect $G ~ {\cup}_{2,4} ~ C_{2,3,5,4}$. It takes the removal of $7$ vertices to isolate $A_2$, $7$ vertices to isolate $A_3$, and $6$ vertices to isolate $C_{3,5}$.
\end {remark}

To conclude our argument, we will use the following construction.  Consider a graph $G$ with a subgraph $H$, and suppose $H$ has $v$ vertices.  Consider another graph $H'$ with at least $v$ vertices.  We \emph{replace} $H$ with $H'$ as follows. Begin with $G$.  Replace the vertices of $H$ with the vertices of $H'$. Create an injective mapping $\iota: H \to H'$. For each edge connecting vertex $g \in G$ to vertex $h \in H$, add an edge connecting $g$ to $\iota(h)$.  Finally, add all of the edges in $H'$.  The new graph is the \emph{replacement} of $H$ with $H'$.  Intuitively, replacing $H$ with $H'$ consists of removing $H$ and placing $H'$ in its place.

\begin{remark}
Given $K_6 ~ {\cup}_{2,4} ~ C_{2,3,5,4}$ and some $G$ with at least $6$ vertices, replacing $K_6$ with $G$ is equivalent to creating $G ~ {\cup}_{2,4} ~ C_{2,3,5,4}$.
\end{remark}
%The edges of this new graph consist of the edges between vertices in $G - G'$ and $G - G'$, the edges between vertices in $G - G'$ and vertices in $H$ corresponding to those in $G'$ and the edges of $H$.

%Remove $G'$ from $G$, keeping all edges connecting vertices in $G - G'$ to other vertices in $G - G'$.  Then, for each edge connecting a vertex in $G - G'$ to a vertex in $G$
\begin{lemma}\label{notGGR}
Suppose $G$ is not generically globally rigid in $\mathbb{R}^d$ but contains the subgraph $H$ which is generically globally rigid.  Let $H'$ be a graph with at least as many vertices as $H$.  Then replacing $H$ with $H'$ results in a graph that is not generically globally rigid in $\mathbb{R}^d$.
\end{lemma}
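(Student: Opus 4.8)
The plan is to exhibit, at a generic realization, an equivalent but non-congruent realization of the replacement graph (call it $\widehat G$); by Theorem~\ref{thm:strssmtrx} and the fact that global rigidity is a generic property, this shows $\widehat G$ is not GGR. Write $W = V(G)\setminus V(H)$ for the vertices of $G$ lying outside $H$, which are exactly the vertices of $\widehat G$ outside $H'$. Every edge of $\widehat G$ from $W$ to $H'$ has its $H'$-endpoint in $\iota(V(H))$, while the edges within $H'$ and within $G[W]$ are inherited unchanged. The crucial first step is to use that $H$ is GGR to \emph{localize} the failure of global rigidity of $G$: if $p^{*}$ is a generic realization of $G$ and $q^{*}$ is equivalent to it, that is $f_G(p^{*}) = f_G(q^{*})$, then since the edges of $H$ are edges of $G$ the restrictions to $V(H)$ are equivalent, and global rigidity of the framework $H(p^{*}|_{V(H)})$ forces them to be congruent; composing $q^{*}$ with a Euclidean motion, we may assume $q^{*}$ and $p^{*}$ agree on $V(H)$ and differ only over $W$.

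\textbf{Transplanting the failure.} Begin with a generic realization $\widehat p$ of $\widehat G$. Define a realization $p^{*}$ of $G$ by placing $w\in W$ at $\widehat p(w)$ and $h\in V(H)$ at $\widehat p(\iota(h))$. Being a subtuple of the coordinates of the generic $\widehat p$, the realization $p^{*}$ is generic on $G$, and each edge of $G$ has the same length under $p^{*}$ as its image edge of $\widehat G$ has under $\widehat p$. As $G$ is not GGR, $G(p^{*})$ is not globally rigid, so there is an equivalent $q^{*}$ with some pairwise distance differing from that of $p^{*}$; by the localization step we may take $q^{*}$ to fix $V(H)$, so the differing pair necessarily involves a vertex of $W$. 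Finally define $\widehat q$ on $\widehat G$ by leaving each vertex of $H'$ at its $\widehat p$-position and placing each $w\in W$ at $q^{*}(w)$.

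\textbf{Verification and the main difficulty.} One then checks $f_{\widehat G}(\widehat p)=f_{\widehat G}(\widehat q)$ edge by edge: edges inside $H'$ are fixed; edges inside $W$ agree because $f_G(p^{*})=f_G(q^{*})$; and for an external edge $\{w,\iota(h)\}$ one uses $q^{*}(h)=p^{*}(h)=\widehat p(\iota(h))$ to identify its $\widehat q$-length with $|q^{*}(w)-q^{*}(h)|$, which equals its $\widehat p$-length by the equivalence of $p^{*}$ and $q^{*}$. The same substitution $h\mapsto\iota(h)$ carries the pair witnessing non-congruence of $p^{*},q^{*}$ — which involves a $W$-vertex and hence survives the relabelling — to a pair of $\widehat G$ whose distance differs between $\widehat p$ and $\widehat q$, so $\widehat p$ and $\widehat q$ are non-congruent, as required. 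I expect the main obstacle to be exactly this localization plus bookkeeping: arguing cleanly that global rigidity of $H$ confines the failure of $G$ to the vertices of $W$ (the only place where the hypothesis on $H$ is genuinely used), and then that this confined failure transfers verbatim through the attachment map $\iota$, with genericity of $p^{*}$ being precisely what guarantees that the hypothesis ``$G$ is not GGR'' can be invoked for the realization we built. Notably, no rigidity assumption on $H'$ is needed beyond $|V(H')|\ge|V(H)|$, since $H'$ is held fixed throughout.
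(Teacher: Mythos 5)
Your proof is correct, and while it opens with the same move as the paper, the second half takes a genuinely different route. The localization step --- using generic global rigidity of $H$ to adjust the equivalent-but-non-congruent realization $q^*$ by a Euclidean motion so that it agrees with $p^*$ on $V(H)$, confining the failure to $W$ --- is exactly the paper's first paragraph. From there, however, the paper proceeds through a chain of intermediate graphs: it first adds all edges among $V(H)$ (replacing $H$ with $K_v$, harmless since the two realizations coincide on $V(H)$), then replaces $H$ with $K_i$ for $i \geq v$ by adjoining $i-v$ new vertices joined only to $K_v$ and placed at identical positions in both realizations (while keeping $p$ generic), and finally observes that the replacement of $H$ with $H'$ is a subgraph of the replacement with $K_i$, so it inherits the failure of global rigidity. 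You instead work directly on the target graph $\widehat G$: you pull a generic realization $\widehat p$ back through the injective map $\iota$ to a realization $p^*$ of $G$ --- generic because its coordinates form a subtuple of the algebraically independent coordinates of $\widehat p$ --- then localize and push the perturbed $W$-part forward while holding every vertex of $H'$ fixed. This is cleaner: it dispenses with the complete-graph intermediaries, the coincident-placement trick for the extra $i-v$ vertices (the fussiest point in the paper's argument), and the subgraph-monotonicity step, and it directly produces, for \emph{every} generic realization of $\widehat G$, an equivalent non-congruent one; your edge-by-edge verification and the transfer of the witnessing pair through $\iota$ both check out. Both arguments lean on the same external fact, which you rightly flag explicitly and the paper uses only implicitly: since global rigidity is a generic property (Gortler--Healy--Thurston), the hypothesis that $G$ is not GGR may be invoked at the particular generic realization $p^*$ you construct rather than merely at some unspecified generic point.
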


\begin{proof}
Since $G$ is not GGR, we can find a generic framework $G(p)$ and a non-equivalent framework $G(q)$, up to Euclidean motions, in $\mathbb{R}^d$. Let $p_1, \ldots, p_v,\allowbreak q_1, \ldots, q_v$ represent the locations of vertices in $H$, and $p_{v+1}, \ldots,\allowbreak p_w,\allowbreak q_{v+1}, \ldots, q_w$ represent the locations of vertices in $G \setminus H$.

It is possible to transform $G(q)$ into an equivalent framework $G(q')$ with $p_i = q'_i$ for $i = 1, \ldots, v$. First, through translations, make $q'_1 = p_1$. Since $H$ is GGR, any other realization of vertices in $H$ must be equivalent up to Euclidean motions. Hence, one can reflect and rotate the entire framework to ensure $q'_i = p_i$ for $i = 2, \ldots, n$. Finally, since $G(p)$ and $G(q)$ are non-equivalent frameworks, $p_i$ and $q'_i$ are not all the same for $i = v+1, \ldots, w$. We have shown that for any generic framework $G(p)$, there exists a non-equivalent framework with the location of the vertices in $H$ the same.

Now connect all the edges of $H$ for both frameworks, which has the same effect as replacing $H$ with $K_v$ in both frameworks. Name this new graph $G'$. Since no edges are added between $G \setminus H$ and $H$, both $p$ and $q'$ preserve the edge lengths of $G'$. Any generic realization of $G'$ is also a generic realization of $G$. Therefore, for any generic framework $G'(p)$, there is another non-equivalent framework $G'(q')$, implying that $G'$ is not GGR.

Next, replacing $H$ with any $K_i$ for $i \geq v$ results in a graph that is not GGR. The case $i = v$ is already proved. The graph obtained by replacing $H$ with $K_i$, which we shall denote as $G''$, contains $G'$ as a subgraph. Starting with $G'$, $p$ and $q'$ as previously described, do the following to both frameworks: add $i-v$ vertices, connect them to all the vertices in $K_v$ only, project them onto the same set of locations in $\mathbb{R}^d$ for both realizations, and finally, ensure that $p$ is still generic. The new graph formed is precisely $G''$. No edges are added between any of the points in $K_i$ and $G'' \setminus K_i$, so $G''(q')$ is a non-equivalent realization of $G''(p)$. Finally, all generic realizations of $G''$ must have the points in the subgraph $G'$ be generic as well, so a non-equivalent realization can be found for any generic realization $G''(p)$. In this way, $G''$ is not GGR for any $i \geq v$.

Finally, the replacement of $H$ with $H'$ is a subgraph of the graph obtained by replacing $H$ with $K_i$ for some $i$, so replacing $H$ with $H'$ results in a graph that is not generically globally rigid.
%For $i > n$, the new graph obtained by replacing $K_n$ with $K_i$, which we shall denote as $G'$, contains $G$ as a subgraph. Take some generic realization $G'(p)$, the locations of the vertices in $G$ are generic as well. Note that the vertices in $G - K_n$ only have edges to vertices in $K_n$, but not to those in $K_i - K_n$. Therefore, by the argument above, we can find a non-equivalent realization such that the locations of the vertices in $K_n$ are the same, but the locations of vertices in $G - K_n$ are different. For any generic realization of $G'$, we can find a non-equivalent realization, showing that replacing $K_n$ with $K_i$ (and hence $H$) results in a graph that is not GGR.
\end{proof}

\begin{corollary}
\label{cor:attachglobrigid}
Let $G$ be any graph in $\mathbb{R}^5$ with at least $6$ vertices. Then $G ~ {\cup}_{2,4} ~ C_{2,3,5,4}$ is not generically globally rigid.
\end{corollary}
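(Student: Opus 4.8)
The plan is to reduce the statement to the single graph $K_6 ~ {\cup}_{2,4} ~ C_{2,3,5,4}$ and then transport the conclusion to an arbitrary $G$ using Lemma \ref{notGGR}. The reduction is immediate once we know two things: that $K_6 ~ {\cup}_{2,4} ~ C_{2,3,5,4}$ is itself not generically globally rigid, and that it contains the generically globally rigid subgraph $K_6$ (a complete graph, which fixes all pairwise distances, is trivially generically globally rigid in $\mathbb{R}^5$). Granting these, for any $G$ with at least $6 = |K_6|$ vertices, Lemma \ref{notGGR} says that replacing the subgraph $K_6$ with $G$ yields a graph that is still not generically globally rigid; and by the preceding remark this replacement is precisely $G ~ {\cup}_{2,4} ~ C_{2,3,5,4}$. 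So the entire content lies in establishing non-global-rigidity of the base graph.

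First I would pin down the unique stress matrix of $K_6 ~ {\cup}_{2,4} ~ C_{2,3,5,4}$. We already know its stress space is one-dimensional and coincides with the stress space of the bipartite graph $K_{7,7}$ obtained by deleting the $K_2$ and $K_4$ edges, whose two sides are $X = A_1 \cup A_3$ and $Y = A_2 \cup A_4$. By Corollary \ref{cor:bipstress} this stress is the product stress: writing $u$ for the unique (up to scale) affine dependence of the seven points of $X$ and $w$ for that of the seven points of $Y$, we have $\omega_{ab} = u_a w_b$ for $a \in X$, $b \in Y$. The crucial observation is that $\sum_a u_a = \sum_b w_b = 0$, which forces every diagonal entry $\Omega_{aa} = -u_a\sum_b w_b$ and $\Omega_{bb} = -w_b\sum_a u_a$ to vanish. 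Ordering the $14$ vertices as $X$ then $Y$, the stress matrix is therefore $\Omega = \begin{pmatrix} 0 & uw^{T} \\ wu^{T} & 0 \end{pmatrix}$. Applying $\Omega$ to $(x,y)$ gives $\left((w^{T}y)\,u,\ (u^{T}x)\,w\right)$, so the image is exactly $\operatorname{span}\{(u,0),(0,w)\}$, a $2$-dimensional space. Hence $\operatorname{rank}\Omega = 2$ and its nullity is $12$.

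Since the stress space is one-dimensional, this is the only stress matrix up to scaling, and its nullity $12$ exceeds the value $d+1 = 6$ demanded by Theorem \ref{thm:strssmtrx}; therefore $K_6 ~ {\cup}_{2,4} ~ C_{2,3,5,4}$ is not generically globally rigid, and the reduction above finishes the proof. The hard part is entirely this middle step, and the main obstacle is recognizing that the unique stress is a \emph{product} $u_a w_b$ of affine dependencies of the two bipartite sides. This single feature does the work twice over: it kills the diagonal of the stress matrix and it collapses the rank of the off-diagonal blocks to $1$ each, so that the nullity is forced far above $d+1$. Once the stress is identified in this product form, the failure of the stress-matrix criterion of Theorem \ref{thm:strssmtrx} is automatic.
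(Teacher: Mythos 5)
Your proof is correct and follows essentially the same route as the paper: reduce to the single graph $K_6 ~ {\cup}_{2,4} ~ C_{2,3,5,4}$ via Lemma \ref{notGGR} with $K_6$ as the generically globally rigid subgraph, and rule out generic global rigidity of the base graph because its one-dimensional stress space coincides with that of the $K_{7,7}$ subgraph, whose stress matrix fails the nullity-$(d+1)$ criterion of Theorem \ref{thm:strssmtrx}. Your explicit identification of the unique stress as the product $\omega_{ab} = u_a w_b$ of affine dependencies, with the resulting computation that $\Omega$ has rank $2$ and nullity $12 > 6$, correctly fills in a detail the paper leaves implicit (the paper merely asserts the two stress matrices share the same nullity without computing it).
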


\begin{proof}
We examine $K_6 ~ {\cup}_{2,4} ~ C_{2,3,5,4}$. As discussed before, the only stresses of this graph come from the subgraph $K_{7,7}$. Consequently, $K_6 ~ {\cup}_{2,4} ~ C_{2,3,5,4}$ has the same stress matrix, up to scale, as $K_{7,7}$ for equivalent realizations. Both stress matrices have the same nullity and as a consequence of Theorem \ref{thm:strssmtrx}, $K_6 ~ {\cup}_{2,4} ~ C_{2,3,5,4}$ is not GGR.

On the other hand, $K_6 ~ {\cup}_{2,4} ~ C_{2,3,5,4}$ contains a subgraph $K_6$ which is GGR. Replacing $K_6$ with any $G$ with at least $6$ vertices forms the graph $G ~ {\cup}_{2,4} ~ C_{2,3,5,4}$, and by Lemma \ref{notGGR}, this graph is not GGR.
\end{proof}

%so the stress matrix of $K_6 ~ {\cup}_{2,4} ~ C_{2,3,5,4}$ has nullity $12$, making it not GGR

This completes the proof of Theorem \ref{thm:main2}. Specifically, Lemma \ref{lemma:attachredrigid} shows generic redundant rigidity, Remark \ref{remark:attachconnected} shows $6$-connectedness, and Corollary \ref{cor:attachglobrigid} shows lack of generic global rigidity. Now, we present some notable examples of $G$. The graphs $K_n ~ {\cup}_{2,4} ~ C_{2,3,5,4}$, where $n \geq 7$, are GPR.  Note that for $K_6 ~ {\cup}_{2,4} ~ C_{2,3,5,4}$, the edges of $K_2$ and $K_4$ have no non-zero stresses, so it is not generically redundantly rigid.

Connelly \cite[8.3]{cn3} recently asked the following: if a graph $G$ is $(d+1)$-connected, GRR, and contains $K_{d+1}$ as a subgraph, is its Tutte realization necessarily infinitesimally rigid?  The concept of Tutte realizations is outside of the scope of this paper.  However, Connelly notes that an affirmative answer would imply that $G$ is always GGR.  The question is answered in the negative, considering $K_n ~ {\cup}_{2,4} ~ C_{2,3,5,4}$, where $n \geq 7$.

He also asked if, in any fixed dimension $d$, there are infinitely many GPR graphs.  Using the attachment construction described, we can find infinitely many graphs in $\mathbb{R}^5$ which are GPR.  Moreover, by the process of coning \cite{cw}, it is possible to preserve generic partial rigidity in these graphs in higher dimensions.  So, for any $d \geq 5$, this question has been answered in the affirmative.  It is still unknown for $d = 3$ and $d=4$.

We can also let $G$ be a $3$-chain with $a_1 = 2$ and $a_3 = 4$. The $3$-chains $C_{2, k, 4}$ with $k \geq 16$ are equivalent to $K_{6, k}$, and can easily be shown to be GRR and 6-connected using the algorithms described earlier in this paper, or finding the space of stresses. This makes $C_{2, k, 4} ~ {\cup}_{2,4} ~ C_{2,3,5,4}$ GPR. This class of graphs is especially notable since they form a $5$-ring, that is, a graph made from a $5$-chain by adding all edges between $A_1$ and $A_5$. It is intriguing that the size of one of the independent sets can be arbitrarily large. Also, a $5$-ring cannot be expressed as the subgraph of a complete bipartite graph.  We also remark that it is possible to have $G$ be some $4$-chain, creating a $6$-ring, which can be expressed as a subgraph of a complete bipartite graph.

Using Gortler, Healy and Thurston's algorithm \cite{ght}, we have proven that $C_{2, 15, 4} ~ {\cup}_{2,4} ~ C_{2,3,5,4}$ is GPR as well. However, $C_{2, 15, 4}$ is $K_{6, 15}$ and is not GRR. Knowing that $G$ is not GRR is not enough to say whether $G ~ {\cup}_{2,4} ~ C_{2,3,5,4}$ is GPR. Its properties rely on the individual characteristics of $G$.  We have seen an example ($K_6~ {\cup}_{2,4} ~ C_{2,3,5,4}$) that is not GPR, and another ($C_{2, 15, 4}~ {\cup}_{2,4} ~ C_{2,3,5,4}$) that is.

In addition to $C_{2,3,5,4}$, the $4$-chain $C_{3, 4, 5, 3}$ in $\mathbb{R}^5$ can also act as an attachment. The proof is analogous. There are more $4$-chains and also greater $k$-chains that we found in higher dimensions, but we have not found any $3$-chains, or any $k$-chains in lower dimensions. We have not completely categorized which $k$-chains can act as attachments. 

It is unknown whether graphs other than $k$-chains can act as attachments.  However, it would be extremely interesting to find a graph that could act as an attachment in $\mathbb{R}^3$, as right now there is only one GPR example in $\mathbb{R}^3$, and such an example would generate infinitely many graphs that are GPR.  We leave this question as an open problem.

\end{section}
\begin{section}{Further Exploration}
The $k$-chains with $\binom{d+2}{2}$ vertices have been fully explored in this paper. Additionally, for $v < \binom{d+2}{2}$,  simple calculations show that the $k$-chain is a subgraph of a complete bipartite graph that is not GLR.  Experimental evidence suggests the following conjecture for $v > \binom{d+2}{2}$:

\begin{conjecture}
Any $(d+1)$-connected $k$-chain in $\mathbb{R}^d$ with more than $\binom{d+2}{2}$ vertices is generically globally rigid.
\end{conjecture}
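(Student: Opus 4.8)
The plan is to establish global rigidity via the stress-matrix criterion of Theorem~\ref{thm:strssmtrx}: it suffices to produce, for some generic realization, an equilibrium stress whose stress matrix has nullity exactly $d+1$. Two simplifications guide the argument. First, since stress-matrix rank is lower semicontinuous in the realization and is always at most $v-d-1$ (the coordinate projections and the all-ones vector always lie in the kernel), it is enough to exhibit a \emph{single} framework, not necessarily generic, carrying a stress with matrix of rank $v-d-1$; the generic realization then has at least this rank, hence exactly this rank. Second, I would reduce the general chain to a small ``irreducible'' one by a vertex-addition induction, leaving only a base-case stress computation.

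The inductive engine is a trilateration lemma: if $H$ is generically globally rigid in $\mathbb{R}^d$ and $H'$ adjoins one vertex $u$ adjacent to at least $d+1$ vertices of $H$, then $H'$ is generically globally rigid. This is elementary. For generic $p$, any $H'(q)$ with the same edge lengths restricts to a framework of $H$ with the same edge lengths, so by global rigidity of $H$ it is congruent to $H(p)$; after applying that congruence, the $\ge d+1$ neighbors of $u$ occupy their original generic, hence affinely independent, positions. A point of $\mathbb{R}^d$ is determined uniquely by its distances to $d+1$ affinely independent points $x_0,\ldots,x_d$ (if $|y-x_i|=|z-x_i|$ for all $i$ then $(z-y)\perp(x_i-x_0)$ for every $i$, forcing $z=y$), so $u$ is pinned and $q\cong p$. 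Increasing a part $a_i$ of a $k$-chain adjoins a vertex adjacent to $A_{i-1}\cup A_{i+1}$, whose sizes sum to at least $d+1$ in any $(d+1)$-connected chain; thus, deleting vertices in a suitable order reduces any $(d+1)$-connected $k$-chain with more than $\binom{d+2}{2}$ vertices to an \emph{irreducible} one, admitting no further deletion preserving both $(d+1)$-connectedness and $v>\binom{d+2}{2}$, and trilateration transports global rigidity back up. The cases $k\le 3$ are complete bipartite graphs, covered by Connelly's classification \cite{cn}.

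It remains to treat the irreducible chains, where I would argue through the ambient complete bipartite graph $K_{A,B}$ obtained by reinstating the set $R$ of missing edges. By the method of Section~3 the chain is still generically locally rigid, so (using Corollary~\ref{cor:bipstress} and Bolker--Roth \cite{br}) its space of stresses has dimension $e-vd+\binom{d+1}{2}$, exceeding that of its $3$-chain cover by $v-\binom{d+2}{2}>0$; this excess is the ``global'' stress carried by the $D^2$ term. Since $K_{A,B}$ has more than $\binom{d+2}{2}$ vertices it is generically globally rigid, so its generic stress has a stress matrix of full rank $v-d-1$. The chain-stresses are precisely the $K_{A,B}$-stresses vanishing on $R$, a linear subspace of the stress space of $K_{A,B}$; because full rank is a Zariski-open condition, it suffices to show this subspace is not entirely contained in the rank-deficient locus. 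By the semicontinuity remark above, one explicit framework carrying such a stress, for instance with the points placed on the moment curve where the bipartite stresses are governed by explicit polynomial dependencies, would finish the proof.

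I expect this last point, that the subspace of stresses vanishing on $R$ escapes the rank-deficient variety, equivalently that the global stress lowers the generic stress-matrix nullity from its value in the $\binom{d+2}{2}$-vertex (GPR) case all the way to $d+1$, to be the main obstacle. The trilateration reduction and the dimension bookkeeping are routine once the lemma is in place; the genuine work is the explicit stress-matrix rank computation at a well-chosen configuration and, in particular, exhibiting one such configuration uniformly across the infinitely many irreducible chains (the authors' computations already confirm this in small cases).
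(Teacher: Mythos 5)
You were asked about a statement the paper does not prove: it is stated as a conjecture supported only by experimental evidence, so there is no paper argument to compare against, and your proposal has to stand as a proof of an open problem. It does not, and you say so yourself: everything funnels into the claim that, for each ``irreducible'' chain, the subspace of $K_{A,B}$-stresses vanishing on the removed edge set $R$ contains a stress whose stress matrix has rank $v-d-1$, and that step is left entirely open. No dimension count can close it: at $v=\binom{d+2}{2}$ the chains satisfying Hendrickson's conditions have exactly the same kind of stress space, yet by Theorem~\ref{main} every stress matrix there is rank-deficient, so the existence of the excess ``$D^2$'' dimension of size $v-\binom{d+2}{2}$ proves nothing about rank by itself. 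Moreover the irreducible base cases are an infinite family (the part sizes and $k$ vary with $d$), so ``one explicit configuration'' must in fact be a uniform construction plus a uniform rank computation --- that \emph{is} the conjecture, not a routine verification appended to it. Your trilateration lemma and the deletion-induction reducing any $(d+1)$-connected chain with $v>\binom{d+2}{2}$ vertices to an irreducible one are correct and standard, and would be a sensible skeleton for a real attack; but as written the proposal reduces the conjecture to its hardest case rather than proving it.

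Two specific assertions along the way are also flawed. First, the semicontinuity transfer ``one framework, not necessarily generic, with a max-rank stress suffices'' is only valid at a \emph{regular} point of $f_G$ (an infinitesimally rigid framework): at special configurations --- such as points on a moment curve or a quadric, which is exactly where bipartite stresses become explicit --- the rank of $df_G(p)$ can drop, the stress space jumps in dimension, and the exhibited stress need not be a limit of stresses of nearby generic frameworks. So any explicit configuration you choose must additionally be certified infinitesimally rigid, which your sketch omits. Second, the cases $k\leq 3$ are not ``covered by Connelly's classification'': \cite{cn} constructs the \emph{non}-globally-rigid bipartite examples at $v=\binom{d+2}{2}$; it does not establish that $(d+1)$-connected complete bipartite graphs with more vertices are generically globally rigid. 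That positive bipartite statement is itself the $k\leq 3$ instance of the very conjecture at hand, so your induction currently rests on an unproved base case even before the irreducible-chain computation is attempted.
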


\begin{figure}
\includegraphics[scale=1]{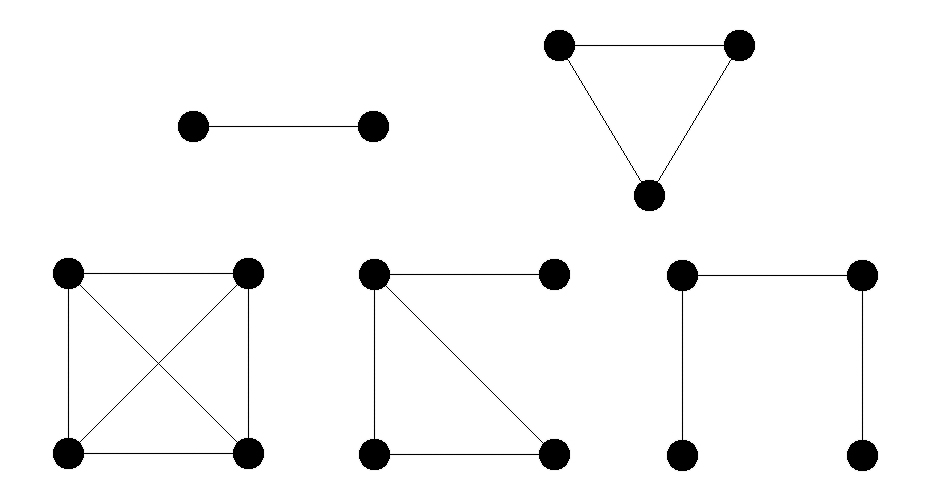}
\caption{Every irreducible graph with at most $4$ vertices.}
\label{fig:graphs}
\end{figure}

We have found a class of GPR subgraphs of GPR complete bipartite graphs. The more general question is to characterize which subgraphs of complete bipartite graphs are GPR. This is a very difficult question to answer generally, as we have also found examples of GPR graphs that are subgraphs of non-GPR complete bipartite graphs, as evidenced by the $6$-rings. 

The $k$-chains and $k$-rings that we have found can be characterized as part of a larger family of graphs. Given some initial connected graph $G$, replace each of the vertices with independent sets and completely connect the new vertices according to $G$.  When will this produce a graph that is GPR?

There exist many congruences among the initial graphs $G$. If there are two vertices in $G$ that connect to the exact same set of vertices, then they can be combined into one independent set. Call a graph $G$  \emph{irreducible} if there do not exist vertices that can be combined this way. Using this fact, we have identified $1$ irreducible connected graph with $2$ vertices, $1$ with $3$ vertices, $3$ with $4$ vertices, and $11$ with $5$ vertices [Figure \ref{fig:graphs}]. From there the number seems to grow exponentially. Experimentally, we have found GPR graphs made from every irreducible graph with at most $5$ vertices. We have also proved that we can make a GPR graph from every $k$-chain and $k$-ring with $k \geq 2$. Hence we suggest the following bold conjecture.

\begin{conjecture}
For any connected graph $G$ with $v > 1$, there exists some $a_1, a_2, \ldots, a_v$ and some $d$ such that if we replace each $v_i$ with an independent set of size $a_i$ and connect them accordingly, the resulting graph is GPR in $\mathbb{R}^d$.
\end{conjecture}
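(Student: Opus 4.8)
The plan is to establish, for a cleverly chosen blow-up, the three defining properties of a GPR graph: $(d+1)$-connectivity, generic redundant rigidity, and the failure of generic global rigidity. The two positive rigidity conditions should be the more tractable, and the failure of global rigidity is where the real difficulty lies. Throughout I would keep both the part sizes $a_1, \ldots, a_v$ and the dimension $d$ as free parameters to be tuned.

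Connectivity first. Since $G$ is connected with $v > 1$, its blow-up is connected for any positive sizes, and a vertex in the part replacing $v_i$ has degree equal to the sum of the sizes $a_j$ over the parts $j$ adjacent to $i$ in $G$. Taking every $a_i$ large relative to $d$, no deletion of $d$ vertices can annihilate a whole part, and a short case analysis on vertex cuts---entirely in the spirit of Proposition~\ref{dponeconnected} and Remark~\ref{remark:attachconnected}---should push the connectivity above $d+1$. I expect this step to be routine once the sizes are chosen generously.

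Next, generic redundant rigidity. By Corollary~\ref{cor:redrigid} it suffices to prove the blow-up is generically locally rigid and carries a non-zero stress on every edge. Each edge lies in a complete bipartite bundle $K_{a_i, a_j}$ coming from an edge $ij$ of $G$, so I would cover the edge set by the bipartite pieces sitting on the edges (or short paths) of $G$ and mimic the $3$-chain-cover computation of Lemma~\ref{lemma:covering}: choosing the $a_i$ large forces each local bipartite graph to have a positive-dimensional stress space by Corollary~\ref{cor:bipstress}, and the symmetry argument repeated throughout Sections~3 and~4 upgrades this to a stress that is non-zero on every individual edge. Local rigidity would follow from the same subgraph-of-bipartite and local-rigidity bookkeeping used in Lemma~\ref{lemma:section3} and Lemma~\ref{lemma:Knrigid}. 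The one delicate point is that the inclusion-exclusion dimension count need not close as cleanly as it does for a path, so some care is required to show that the local bipartite stresses really span the full stress space.

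The crux, and the expected main obstacle, is showing the blow-up is \emph{not} generically globally rigid. When $G$ is bipartite this is comparatively easy: grouping the parts by the two sides of $G$ realizes the blow-up as a spanning subgraph of a complete bipartite graph, and by taking the total number of vertices to be $\binom{d+2}{2}$ with each side at least $d+2$ we land inside one of Connelly's counterexamples (Theorem~\ref{thm:bipartite}); monotonicity of global rigidity under edge addition then forces the failure exactly as in Proposition~\ref{globrigid}. The genuine difficulty is the non-bipartite case, where no such ambient complete bipartite graph exists---as already observed for the $5$-ring. Here I would try to localize the obstruction: show that every non-bipartite connected $G$ can be blown up so that the result either contains, or is built by the replacement operation from, a small GPR gadget playing the role of $C_{2,3,5,4}$, and then transfer the failure of global rigidity via Lemma~\ref{notGGR}. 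The essential gap is that there is no a priori reason an arbitrary non-bipartite template admits such a gadget while retaining the required connectivity and redundancy; producing a universal family of gadgets---or, failing that, a direct argument that every stress matrix of the blow-up has nullity strictly greater than $d+1$ for suitable sizes---is the key missing ingredient, and is precisely why the statement is offered only as a conjecture.
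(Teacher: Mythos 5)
The statement you were given is the paper's final conjecture: the authors offer no proof of it at all, only experimental evidence (GPR blow-ups found for every irreducible template on at most $5$ vertices), the proved special cases of $k$-chains and $k$-rings, and a coning heuristic. So there is no proof in the paper to compare against, and your proposal is right not to claim one; your closing admission that the non-bipartite case lacks a key ingredient is exactly the honest state of affairs. Your triage also matches the paper's own landscape. Connectivity of a blow-up with all parts of size at least $d+1$ is routine (Proposition \ref{dponeconnected} is the model); failure of global rigidity in the bipartite case does reduce, via monotonicity under edge addition as in Proposition \ref{globrigid}, to Connelly's Theorem \ref{thm:bipartite}, provided all parts of size at least $d+1$ fit into a total of exactly $\binom{d+2}{2}$ vertices, which is arrangeable for $d$ large since $\binom{d+2}{2}$ grows quadratically in $d$; and the non-bipartite case is genuinely open---the paper's only non-bipartite examples, the $5$-rings $C_{2,k,4}~{\cup}_{2,4}~C_{2,3,5,4}$, required the entirely separate attachment/replacement machinery of Section 5 (Lemmas \ref{lemma:attachredrigid} and \ref{notGGR}), and the authors state explicitly that they have not characterized which graphs can act as attachments.

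Two concrete corrections to your plan. First, ``taking every $a_i$ large relative to $d$'' is in tension with the rest of the argument: by the paper's only mechanism for killing global rigidity you must pin the total at exactly $\binom{d+2}{2}$ (with fewer vertices the blow-up sits inside a complete bipartite graph that is not even GLR, and with more vertices the paper's first conjecture in Section 6 predicts the blow-up is GGR), so the sizes live in a narrow window---in the chain case, interior parts at least $d+1$, the parts adjacent to the ends at least $d+2$, sum exactly $\binom{d+2}{2}$---and cannot be chosen generously and independently. Second, even your ``comparatively easy'' bipartite case is not covered by the paper's results: the staircase elimination in Section 3 and the $3$-chain-cover count in Lemma \ref{lemma:covering} exploit the path structure of a chain (the inclusion--exclusion telescopes only because adjacent $3$-chains overlap in single $2$-chains). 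One can plausibly run the $\Upsilon$-gadget argument along an odd path of $G$ joining the endpoints of each removed edge, and cover the edge set by $3$-chains along paths in $G$, but whether those local stress spaces span the full stress space---the worry you yourself flag---is exactly where a proof would have to do new work, for bipartite templates no less than for non-bipartite ones. In short: a sensible research plan, correctly calibrated as a plan, for a statement the paper itself leaves open.
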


%Connelly and Whiteley \cite{cw} showed that if $G$ is GPR in $\mathbb{R}^d$, then adding another vertex and connecting it to every other vertex (the process is called coning) preserves the redundant rigidity, $(d+1)$-connectedness, and non global rigidity in $\mathbb{R}^{d+1}$. 
Remember that coning a graph that is GPR in $\mathbb{R}^d$ creates a graph that is GPR in $\mathbb{R}^{d+1}$. For virtually all of the initial graphs with $4$ or $5$ vertices, the GPR graph was obtained from a previous graph that was GPR, either by coning or by coning and removing some edges.  This may help to explain why the conjecture might be true. On the other hand, the $k$-chains and $k$-rings which are GPR are not obtained by coning, so there might be other types of graphs that resist coning.

\end{section}
	%Special thanks to Dylan Thurston, Joe Ross and Ina Petkova for their immeasurable help in writing the paper.

\end{document}